\def\thesection{\arabic{section}}
\def\theequation{\thesection.\arabic{equation}}
\newcommand{\ds} {\displaystyle}
\newcommand{\e}{\varepsilon}
\newcommand{\pa} {\partial}
\newcommand{\al} {\alpha}
\newcommand{\ba} {\beta}
\newcommand{\ga} {\gamma}
\newcommand{\Om} {\Omega}
\newcommand{\ra} {\rightarrow}
\newcommand{\rp} {\rightharpoonup}
\newcommand{\De} {\Delta}
\newcommand{\la} {\lambda}
\newcommand{\La} {\Lambda}
\newcommand{\noi} {\noindent}
\newcommand{\na} {\nabla}
\newcommand{\mc} {\mathcal}
\newcommand{\ld} {\langle}
\newcommand{\rd} {\rangle}
\newcommand{\vertiii}[1]{{\left\vert\kern-0.25ex\left\vert\kern-0.25ex\left\vert #1 
		\right\vert\kern-0.25ex\right\vert\kern-0.25ex\right\vert}}
\def\theequation{\@arabic{\c@section}.\@arabic{\c@equation}}
\def\R{{I\!\!R}}
\def\QED{\hfill {$\square$}\goodbreak \medskip}
\def\R{\mathbb{R}^N}
\newtheorem{Theorem}{Theorem}[section]
\newtheorem{Lemma}[Theorem]{Lemma}
\newtheorem{Proposition}[Theorem]{Proposition}
\newtheorem{Remark}[Theorem]{Remark}
\newtheorem{Definition}[Theorem]{Definition}
\begin{document}
	
	\title
	{Singular doubly nonlocal elliptic problems with Choquard type critical growth nonlinearities }
		\author{  Jacques Giacomoni$^{\,1}$ \footnote{e-mail: {\tt jacques.giacomoni@univ-pau.fr}}, \ Divya Goel$^{\,2}$\footnote{e-mail: {\tt divyagoel2511@gmail.com}},  \
		and \  K. Sreenadh$^{\,2}$\footnote{
			e-mail: {\tt sreenadh@maths.iitd.ac.in}} \\
		\\ $^1\,${\small Universit\'e  de Pau et des Pays de l'Adour, LMAP (UMR E2S-UPPA CNRS 5142) }\\ {\small Bat. IPRA, Avenue de l'Universit\'e F-64013 Pau, France}\\  
		$^2\,${\small Department of Mathematics, Indian Institute of Technology Delhi,}\\
		{\small	Hauz Khaz, New Delhi-110016, India } }

	\date{}
	
	\maketitle

		\begin{abstract}
		 The theory  of elliptic equations involving singular nonlinearities is well studied topic but the  interaction of singular type nonlinearity with  nonlocal nonlinearity in elliptic problems has not been investigated so far. In this article, we  study the very singular and doubly nonlocal singular problem $(P_\la)$(See below). Firstly, we establish a very weak comparison principle and the optimal Sobolev regularity. Next using the critical point theory  of non-smooth analysis and the geometry of the energy functional, we establish the global multiplicity of positive weak solutions. 
		\medskip
		
		\noi \textbf{Key words}: Choquard equation, Fractional Laplacian, Singular Nonlinearity, Non smooth analysis, Regularity. 
		
		\medskip
		
		\noi \textit{2010 Mathematics Subject Classification:   49J35, 35A15, 35S15, 46E35, 49J52}

	\end{abstract}
\newpage
\section{Introduction}
The purpose of the article is to discuss the existence and multiplicity  of  weak solutions to the following singular problem: 
\begin{equation*}
(P_\la)\;
\left\{\begin{array}{rllll}
(-\De)^s u
&=u^{-q}+ \la \left(\ds \int_{\Om}\frac{|u|^{2^*_\mu}(y)}{|x-y|^{\mu}}dy\right) |u|^{2^*_\mu-2}u, \quad u
>0  \; \text{in}\;
\Om,\\
u&=0 \; \text{ in } \R \setminus \Om,
\end{array}
\right.
\end{equation*}
for all $q>0, N\ge 2s, s \in (0, 1), 2_{\mu}^{*}=\frac{2N-\mu}{N-2s}$ and  $\Om$ is a bounded domain in $\mathbb R^N$ with smooth boundary.  Here the operator  $(-\De)^s$ is the fractional Laplacian  defined as 
\begin{align*}
(-\De)^s u(x)= - \text{P.V. } \int_{ \R} \frac{u(x)-u(y)}{|x-y|^{N+2s}} dy
\end{align*}
where P.V denotes the Cauchy principal value.\\
The problems involving  singular nonlinearity have a  
very long history. In the  pioneering work \cite{crandal}, Crandall, Rabinowitz and Tartar \cite{crandal} proved the existence of a  solution  of  classical elliptic PDE with singular nonlinearity using the approximation arguments. Later many researchers studied the problems involving singular nonlinearity. Haitao \cite{hai} studied the following problem 
\begin{align}\label{si96}
-\De u = au^{-q}+b u^{\frac{N+2}{N-2}}, u>0\text{   in   } \Om, \quad u  =0 \text{   on   } \pa \Om
\end{align}
where $\Om\subset \R(N\geq 3)$ is a smooth bounded domain. If  $a=\la$ and $b=1$, and  $q\in (0,1)$, authors proved a global multiplicity result. While in \cite{adi1,dhanya1}, researchers improvised the results of \cite{hai} and  proved the global multiplicity result for  $q \in (0,3)$.
In \cite{hirano1}, Hirano, Saccon, and Shioji studied the problem \eqref{si96} with $a=\la$ and $b=1$, and  $q\in (0,1)$.  Using the  well known splitting Nehari manifold method, authors proved the multiplicity of solutions for small $\la$.    While in \cite{hirano}, authors  studied the problem for all $q>0$, $a=1~b=\la$,  and established a global multiplicity result  using the nonsmooth analysis. For more details on singular type problems,  we refer to \cite{coc,ghergu,GST2,GST, hai,hern} and references therein.  

The study of nonlinear elliptic problems with critical terms motivated by Hardy-Littlewood-Sobolev inequality started long back and  attracted lot of researchers due to its wide applications. Indeed,  it was originated in the framework of various physical models. One of the first applications was found in   H. Fr\"ohlich and S. Pekar  model of   the polaron, where free electrons in an ionic lattice interact with photons associated to the deformations of the lattice or with the polarization that it creates on the medium \cite{fro1,fro2}.  In the modeling of one component plasma, Ph. Choquard  gave the model which involves  Choquard equation \cite{ehleib}. Later on such nonlinear problems are called Choquard equations and many researchers studied these type of problems to understand the existence, uniqueness, radial symmetry and regularity of the solutions \cite{moroz1,moroz2,moroz5} and references therein. 
Pertaining the Choquard type critical exponent problems on bounded domains, Gao and Yang \cite{yang}  studied the Brezis-Nirenberg type existence and nonexistence results with Choquard critical nonlinearities. In \cite{cassani}, \cite{seok} and\cite{tang} are studied a Brezis-Nirenberg type problem with uppercritical growth, concentration profiles of ground states and existence of semiclassical states respectively.

Nonlocal problems involving fractional Laplacian  challenged a lot of researchers due to the large spectrum of applications.  Consider the   following problem
\begin{align}\label{si66}
(-\De)^s u = f(x,u) \text{ in } \Om ,\quad u=0 \text{ in } \R\setminus \Om
\end{align}
where $f$ is a Carath\'eodory function.   The questions of existence, multiplicity and regularity  of solutions  to problem \eqref{si66} have been extensively studied in \cite{servadei, valdi} and references therein. 
Concerning the   existence and multiplicity of solutions to doubly nonlocal problems,  a lot of works have been done. For a detailed state of art, one can refer \cite{chen,avenia,ts} and references therein.

On the other hand, Barrios et al.  \cite{barrios} started the work on nonlocal equations with singular nonlinearity. Precisely, \cite{barrios} deal with the existence of solutions to  the following problem 
\begin{align}\label{si67}
(-\De)^s u = \la \left(\frac{a(x)}{u^{r}}+ f(x,u) \right) \text{ in } \Om,\quad  u=0 \text{ in } \R\setminus \Om
\end{align}
where $\Om$ is a  bounded domain with smooth boundary, $N>2s,\; 0<s<1,\; r,\la>0,\; f(x,u)\sim u^p,\; 1<p<2^*_s -1$.  In the spirit of \cite{crandal}, here authors first prove the existence of solutions $u_n$ to the equation with singular term $1/u^r$ replaced by $1/(u+1/n)^{r}$ and use the uniform estimates on the sequence $\{u_n\}$ to finally prove the existence of a solution to \eqref{si67}.
Furthermore, authors prove some Sobolev regularity, in particular for $r>1$ that  $u^{\frac{r+1}{2}} \in X_0$.  
In case of $s=1$, optimal Sobolev regularity was established in \cite{bocardo} and \cite{gia} for semilinear and quasilinear elliptic type problems respectively.  But  in case of $0<s<1$, the  question  of optimal Sobolev regularity   still remained an open question.  The regularity issue  is of  independent interest.  In the recent times, Adimurthi, Giacomoni and Santra \cite{adi} studied the problem \eqref{si67} with $f=0$ and  complement the results of \cite{barrios}. In particular they obtained the boundary behaviour and H\"older regularity of the classical solution. Then exploiting this asymptotic behavior, authors obtained multiplicity of classical solutions by global bifurcation method in the framework of weighted spaces for  \eqref{si67} with subcritical $f$. 

 Regarding the critical case, Giacomoni, Mukherjee and Sreenadh \cite{GMS} studied the   problem  \eqref{si67} with $a=1/\la, \; r>0,$ and $f(x,u)\sim u^{2^{*}_{s}-1}$.   Here authors extended the techniques of \cite{hirano} in fractional framework and   proved the existence and multiplicity of solutions  in $C^\al_{\text{loc}}(\Om) \cap L^\infty(\Om)$ for some $\al>0$. Recently, authors \cite{gts1} proved the  global multiplicity result  for \eqref{si67} with $a=1/\la, p=2^*_s-1 $ and $r(2s-1)<(1+2s)$ for energy solutions. 
  Concerning the doubly nonlocal problem with singular operators, in \cite{newpaper}, we studied the regularity results for the problems of th type $(P_\la)$ with $0<q<1$. 
  But the questions of existence, multiplicity of solutions to the problem $(P_\la)$ was a  completely open problem even when $s=1$. Also  the question of (H\"older, Sobolev) regularity of solutions for $q\geq 1$  still remained as an open problem. 

  Motivated by the above discussion, in this article we answer the open problems stated above with an unified approach. More precisely, we  consider a more general  definition of  weak solutions  as compared to definition of \eqref{si67} in \cite{adi}. By establishing a new comparison principle (see Lemma \ref{lemsi17}) we prove that any very weak solution is actually a classical solution. This  is a significant extension of the  regularity results obtained in \cite{adi}. The question of optimal Sobolev regularity is  also answered in our article (see Lemma \ref{lemsi16} and Theorem \ref{thmsi5}). To prove Lemma \ref{lemsi16}, we  exploit suitably the boundary behavior of the  weak solution of problem \eqref{si5}  and the Hardy's inequality. 
The crucial comparison principle  in Lemma \ref{lemsi17} is obtained with a careful use of suitable testing functions  to tackle the $H^s_{loc}$ phenomena. In case of $s=1$, this result was established in \cite{canino} with a slightly different approach. We first prove the $L^\infty(\Om)$ estimate for solutions of  $(P_\la)$ by establishing the relation between  the solutions of $(P_\la)$ and $(\widetilde{P_{\la}})$ (See Section 2). 
The techniques used here can be applied in a more general context and are of independent interest. Next, using the results of  \cite{adi} and Lemma \ref{lemsi17} we prove the  asymptotic behavior and  optimal H\"older and Sobolev regularity of weak solutions. 

In this paper we have given a consolidated approach  to prove  the global multiplicity result for the problem $(P_\la)$ exploiting convex properties of the singular nonlinearity and the geometry of the energy functional. 
To the best of our knowledge there is no previous contribution which deals the Choquard problem with singular nonlinearity.  Further, the results proved in this article are new and novel even in case of $s=1$ where the approach  can be closely adapted.

For simplicity of illustration, we set some notations. We denote $\|u\|_{L^p(\Om)} $ by  $|u|_p$,   $\|u\|_{X_0}$ by $\|u\|$, $[u]_{H^s(A)}= \int_{A}\int_{A}\frac{(u(x)-u(y))^2}{|x-y|^{N+2s}}~dxdy$. The positive constants $C,c_1,c_2\cdots $ values change case by case.

 Turning to the paper organization: In Section 2, we define the function spaces,  give  some preliminaries  of nonsmooth analysis  and further state the  main results of the article. In Section 3,  we establish a very weak comparison principle. In Section 4, we established the regularity of solutions to $(P_\la)$. In sections 5 and 6 , we prove the existence of first and second  solution to $(P_\la)$. 
 
\section{Preliminary results and statement of main results}
We recall the Hardy-Littlewood-Sobolev Inequality which is foundational in study of Choquard problems of the type $(P_\la)$
\begin{Proposition} \cite{leib}
	Let $t,r>1$ and $0<\mu <N$ with $1/t+\mu/N+1/r=2$, $f\in L^t(\mathbb{R}^N)$ and $h\in L^r(\mathbb{R}^N)$. There exists a sharp constant $C(t,r,\mu,N)$ independent of $f,h$, such that 
	\begin{equation*}\label{co9}
	\int_{\mathbb{R}^N}\int_{\mathbb{R}^N}\frac{f(x)h(y)}{|x-y|^{\mu}}~dydx \leq C(t,r,\mu,N) |f|_{t}|h|_{r}.
	\end{equation*}
\end{Proposition}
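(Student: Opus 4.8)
The plan is to deduce this bilinear inequality from a one–parameter mapping property of the Riesz potential. Writing the left-hand side as $\int_{\R} f(x)\,(I_\mu h)(x)\,dx$ with $(I_\mu h)(x):=\int_{\R}|x-y|^{-\mu}h(y)\,dy$, H\"older's inequality reduces the claim to the estimate $\|I_\mu h\|_{L^{t'}(\R)}\le C\,\|h\|_{L^r(\R)}$, where $1/t+1/t'=1$; the hypothesis $1/t+\mu/N+1/r=2$ is precisely the scaling relation $\tfrac1{t'}=\tfrac1r-\tfrac{N-\mu}{N}$ that makes such a bound dimensionally consistent, so it is the natural target.

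To prove the Riesz potential estimate I would split the kernel at a free radius $\delta>0$,
\begin{equation*}
(I_\mu h)(x)=\int_{|x-y|<\delta}\frac{h(y)}{|x-y|^\mu}\,dy+\int_{|x-y|\ge\delta}\frac{h(y)}{|x-y|^\mu}\,dy=:L_\delta(x)+H_\delta(x).
\end{equation*}
A dyadic decomposition of the ball $\{|x-y|<\delta\}$ together with $\mu<N$ gives the pointwise bound $|L_\delta(x)|\le C\,\delta^{N-\mu}\,(Mh)(x)$, where $M$ is the Hardy--Littlewood maximal operator, while H\"older's inequality on the complementary region gives $|H_\delta(x)|\le C\,\delta^{N-\mu-N/r}\,\|h\|_{L^r}$, the integral $\int_{|z|\ge\delta}|z|^{-\mu r'}\,dz$ being finite exactly because $t>1$ (equivalently $\mu r'>N$). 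Since $N-\mu>0>N-\mu-N/r$, optimizing the sum $L_\delta+H_\delta$ over $\delta$ yields
\begin{equation*}
(I_\mu h)(x)\le C\,\|h\|_{L^r}^{\,r(N-\mu)/N}\,(Mh)(x)^{\,1-r(N-\mu)/N},
\end{equation*}
and one checks that $1-r(N-\mu)/N=r/t'$. Raising to the power $t'$, integrating, and invoking the maximal function theorem $\|Mh\|_{L^r}\le C_r\,\|h\|_{L^r}$ (valid since $r>1$) gives $\|I_\mu h\|_{L^{t'}}\le C\,\|h\|_{L^r}$, which together with the reduction above proves the inequality with a finite constant.

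Finally, the \emph{sharp} constant $C(t,r,\mu,N)$ is then well defined as the infimum of all admissible constants, and its finiteness and positivity follow from the argument above; its exact value, and the existence and shape of extremizers (in the diagonal case $t=r$ one reduces via Riesz's rearrangement inequality to radially symmetric decreasing competitors and then performs a stereographic change of variables, following Lieb), is a separate and harder question that I would only invoke if the numerical value were actually needed in the sequel. In the elementary route, the main obstacle is organizing the two kernel estimates and the optimization cleanly; the single genuine black box is the $L^r$-boundedness of $M$, which is exactly why the restriction $r>1$ (and, by symmetry, $t>1$) cannot be dropped.
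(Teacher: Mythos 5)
This proposition is the classical Hardy--Littlewood--Sobolev inequality, which the paper does not prove at all: it is simply quoted from Lieb--Loss \cite{leib}. Your argument is therefore necessarily a different route, and it is a correct one. The reduction to the Riesz potential bound $\|I_\mu h\|_{L^{t'}}\le C\|h\|_{L^r}$ via H\"older, the Hedberg-type splitting of the kernel at radius $\delta$, the bound $|L_\delta|\le C\delta^{N-\mu}Mh$ by dyadic shells (using $\mu<N$), the bound $|H_\delta|\le C\delta^{N-\mu-N/r}\|h\|_{L^r}$ (using $\mu r'>N$, which is indeed equivalent to $t>1$ under the scaling relation), and the optimization in $\delta$ followed by the maximal theorem for $r>1$ are all standard and check out; the exponent identity $1-r(N-\mu)/N=r/t'$ is verified by the constraint $1/t+\mu/N+1/r=2$. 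Two small caveats. First, the inequality should be applied to $|f|$ and $|h|$ (or stated with absolute values), since as written the left-hand side need not be positive; this is cosmetic. Second, and more substantively for this paper: your proof yields \emph{a} finite constant, and you correctly observe that the \emph{sharp} constant is then well defined as the infimum of admissible constants. That is enough for almost every use of the proposition here, but note that Lemma \ref{lulem13} and the energy-level computations in Section 6 use the explicit relation $S=S_{H,L}\left(C(N,\mu)\right)^{\frac{N-2s}{2N-\mu}}$, which rests on knowing the extremizers and the exact value of the sharp constant in the diagonal case $t=r=2N/(2N-\mu)$; that part genuinely requires Lieb's rearrangement and conformal-invariance argument, which you rightly flag as a separate black box rather than something your elementary route delivers.
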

Consider the space 
\begin{align*}\label{si68}
X_0:=  \{u \in H^{s}(\R): u=0  \text{ a.e in  } \R \setminus \Om \},
\end{align*}
equipped with the inner product 
\begin{align*}
\ld u,v \rd = \iint_{Q} \frac{(u(x)-u(y))(v(x)-v(y))}{|x-y|^{N+2s}}~dxdy.
\end{align*} 
 where $Q= \mathbb{R}^{2N} \setminus (\Om^c\times \Om^c)$. From the embedding results (\cite{servadei}), the space $X_0$ is continuously embedded  into $L^r(\R)$ with $ r\in [1,2^*_s]$ where $2^*_s= \frac{2N}{N-2s}$. The embedding is compact if and only if $r < 2^{*}_{s}$.
The best constant $S$ of the classical Sobolev embedding is defined  
\begin{align*}
S= \inf_{u \in X_0\setminus \{ 0\}}   \frac{\int_{\R}\int_{\R} \frac{|u(x)-u(y)|^2}{|x-y|^{N+2s}}~dxdy }{\left( \int_{ \Om} |u|^{2^*_s}\right)^{2/2^*_s}}. 
\end{align*}
Consequently, we define
\begin{align*}
S_{H,L}= \inf_{u \in X_0\setminus \{ 0\}}   \frac{\int_{\R}\int_{\R} \frac{|u(x)-u(y)|^2}{|x-y|^{N+2s}}~dxdy }{\left( \int_{\R}\int_{\R} \frac{|u|^{2^*_\mu}(x)|u|^{2^*_\mu}(y)}{|x-y|^\mu}  ~dxdy \right)^{1/2^*_\mu}}. 
\end{align*}
\begin{Lemma}\label{lulem13}
	\cite{ts}
	The constant $S_{H,L}$ is achieved if  and only if 
	\begin{align*}
	u=C\left(\frac{b}{b^2+|x-a|^2}\right)^{\frac{N-2s}{2}}
	\end{align*} 
	where $C>0$ is a fixed constant , $a\in \mathbb{R}^N$ and $b\in (0,\infty)$ are parameters. Moreover,
	\begin{align*}
	S=	S_{H,L} \left(C(N,\mu)\right)^{\frac{N-2s}{2N -\mu}}.
	\end{align*}
\end{Lemma}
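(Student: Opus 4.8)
The plan is to reduce the minimization problem defining $S_{H,L}$ to the classical fractional Sobolev minimization defining $S$, using the Hardy--Littlewood--Sobolev inequality as the bridge. The bookkeeping identity that makes everything fit is
$$2^*_\mu\cdot\tfrac{2N}{2N-\mu}=\tfrac{2N-\mu}{N-2s}\cdot\tfrac{2N}{2N-\mu}=\tfrac{2N}{N-2s}=2^*_s .$$
Applying the Proposition above with $f=h=|u|^{2^*_\mu}$ and $t=r=\tfrac{2N}{2N-\mu}$ (so that $1/t+1/r+\mu/N=2$) gives
$$\iint_{\R\times\R}\frac{|u|^{2^*_\mu}(x)\,|u|^{2^*_\mu}(y)}{|x-y|^{\mu}}\,dx\,dy\ \le\ C(N,\mu)\Big(\int_{\Om}|u|^{2^*_s}\Big)^{\frac{2N-\mu}{N}}.$$
Raising to the power $1/2^*_\mu$ and using $\tfrac{2N-\mu}{N\,2^*_\mu}=\tfrac{N-2s}{N}=\tfrac{2}{2^*_s}$, the denominator in the Rayleigh quotient defining $S_{H,L}$ is bounded above by $C(N,\mu)^{1/2^*_\mu}\big(\int_{\Om}|u|^{2^*_s}\big)^{2/2^*_s}$. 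Dividing, taking the infimum over $u\in X_0\setminus\{0\}$, and recalling the definition of $S$, one obtains the lower bound $S_{H,L}\ge S\,C(N,\mu)^{-1/2^*_\mu}$.

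Next I would show this lower bound is attained, and precisely by the claimed family. Two classical rigidity facts are invoked: (i) Lieb's characterization of equality in the Hardy--Littlewood--Sobolev inequality, which forces the extremal profile $f(x)=A(\gamma^2+|x-x_0|^2)^{-(2N-\mu)/2}$; and (ii) the known fact that the fractional Sobolev quotient $S$ is attained exactly by the fractional Aubin--Talenti bubbles $U_{a,b}(x)=C\big(b/(b^2+|x-a|^2)\big)^{(N-2s)/2}$. One checks that for $u=U_{a,b}$ one has $(N-2s)2^*_\mu/2=(2N-\mu)/2$, so $|u|^{2^*_\mu}$ is, up to a constant, exactly $(b^2+|x-a|^2)^{-(2N-\mu)/2}$; hence equality holds in the Hardy--Littlewood--Sobolev step, while simultaneously $U_{a,b}$ realizes $S$. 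Substituting $U_{a,b}$ into the Rayleigh quotient of $S_{H,L}$ therefore yields exactly $S\,C(N,\mu)^{-1/2^*_\mu}$, giving the reverse inequality and at the same time exhibiting $U_{a,b}$ as a minimizer. Combined with the previous paragraph this proves $S_{H,L}=S\,C(N,\mu)^{-1/2^*_\mu}$, i.e. $S=S_{H,L}\,\big(C(N,\mu)\big)^{(N-2s)/(2N-\mu)}$ since $1/2^*_\mu=(N-2s)/(2N-\mu)$.

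For the converse (``only if'') direction, suppose some $u$ attains $S_{H,L}$. Replacing $u$ by $|u|$ leaves the Hardy--Littlewood--Sobolev term unchanged and does not increase the Gagliardo seminorm, because $\big||u(x)|-|u(y)|\big|\le|u(x)-u(y)|$; hence $|u|$ is again a minimizer and we may assume $u\ge 0$, with equality holding in \emph{both} inequalities used above. Equality in the Hardy--Littlewood--Sobolev step, by Lieb's rigidity result applied to $|u|^{2^*_\mu}$, forces $|u|^{2^*_\mu}(x)=A(\gamma^2+|x-x_0|^2)^{-(2N-\mu)/2}$, which after relabeling the parameters is precisely $u=C\big(b/(b^2+|x-a|^2)\big)^{(N-2s)/2}$; equality in the Sobolev step is then automatically consistent, this function being an Aubin--Talenti bubble.

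I expect the genuinely nontrivial ingredients to be the two quoted rigidity statements — the classification of Hardy--Littlewood--Sobolev extremals and, more delicate, the classification of extremals of the fractional Sobolev inequality (obtained in the literature via moving-plane and symmetrization methods). The remaining work — matching exponents so that $|u|^{2^*_\mu}$ has exactly the Hardy--Littlewood--Sobolev extremal profile, and tracking the powers $1/2^*_\mu$ and $2/2^*_s$ — is routine once the identity $2^*_\mu\cdot\tfrac{2N}{2N-\mu}=2^*_s$ is in hand.
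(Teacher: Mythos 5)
The paper offers no proof of this lemma --- it is quoted verbatim from \cite{ts} --- and your argument is precisely the standard one given there and in \cite{yang}: apply the Hardy--Littlewood--Sobolev inequality with $t=r=\tfrac{2N}{2N-\mu}$ to get $S_{H,L}\ge S\,C(N,\mu)^{-1/2^*_\mu}$, then observe that the exponent identity $(N-2s)\,2^*_\mu/2=(2N-\mu)/2$ makes the Aubin--Talenti bubbles saturate both the HLS and the Sobolev steps simultaneously, with the rigidity of HLS extremals giving the converse; your exponent bookkeeping is correct. The only caveat, which is a defect of the lemma's statement rather than of your proof, is that the infimum as written is taken over $X_0$ (functions vanishing outside the bounded domain $\Om$), where the bubbles do not live and the constant is in fact not attained; the lemma is to be read, as in \cite{ts}, with the quotient taken over the whole space.
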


\begin{Definition} 
	For $\phi \in C_0(\overline{\Om})$ with $\phi >0$ in $\Om$, the set $C_\phi(\Om)$ is defined as 
	\begin{align*}
	C_\phi(\Om)= \{ u \in C_0(\overline{\Om})\; :\; \text{there exists } c \geq 0 \text{ such that } |u(x)|\leq c\phi(x), \text{ for all } x \in \Om   \},
	\end{align*}
	endowed with the natural norm $\bigg\|\ds \frac{u}{\phi}\bigg\| _{L^{\infty}(\Om)}$.
\end{Definition}
\begin{Definition}
	The positive cone of $C_\phi(\Om)$ is the open convex subset of $C_\phi(\Om)$ defined as 
	\begin{align*}
	C_\phi^+(\Om)= \left\{ u \in C_\phi(\Om)\; :\; \inf_{x \in \Om} \frac{u(x)}{\phi(x)}>0 \right\}.
	\end{align*}
\end{Definition}


The barrier function  $\phi_q$ is defined as follows: 
\begin{equation*}\label{si3}
\begin{aligned}
\phi_q=  \left\{
\begin{array}{ll}
\phi_1  &  \text{ if }0<q <1, \\
\phi_1\left(\log\left( \frac{A}{\phi_1}\right) \right)^{\frac{1}{2}} & \text{ if } q=1, \\
\phi_1^{\frac{2}{q+1}} & \text{ if } q>1,\\
\end{array} 
\right.
\end{aligned}
\end{equation*}
where $\phi_1$  is  the  normalized ($\|\phi_1\|_{L^\infty(\Om)}=1$)  eigenfunction corresponding to the smallest eigenvalue of $(-\De)^{s}$ on $X_0$ and $A> \text{diam}(\Om)$. We recall that $\phi_1 \in C^{s}(\R)$  and $ \phi_1 
\in C_{d^{s}}^+(\Om)$ (See Proposition 1.1 and Theorem 1.2 of \cite{RS}).

Before giving the definition of weak solution to $(P_\la)$ we  discuss the  solution of 
 the following purely singular problem
\begin{equation}\label{si5}
(-\De)^s u
=u^{-q},\; u>0   \; \text{in}\;
\Om,
u=0 \; \text{ in } \R \setminus \Om.
\end{equation}
From 	\cite{adi} we know the following result. 
\begin{Proposition}\label{propsi1}
Let $q>0$. Then there exists $ \overline{u} \in  L^\infty(\Om) \cap C^+_{\phi_q} $ classical solution to  \eqref{si5}. Moreover,  $\overline{u}$ has the following properties:
\begin{enumerate}
	\item [(i)] $\overline{u} \in X_0$ if  and only if   $q(2s-1) < (2s+1)$ and in this case we have unique classical solution to \eqref{si5}.
	\item [(ii)] $\overline{u} \in  C^\ga(\R) $ where 
		\begin{equation}\label{si4}
	\begin{aligned}
	\ga=  \left\{
	\begin{array}{ll}
	s  &  \text{ if  }q<1, \\
	s-\e  & \text{ if } q=1, \text{ for all } \e>0 \text{ small enough}, \\
	\frac{1}{q+1} & \text{ if } q>1.\\
	\end{array} 
	\right.
	\end{aligned}
	\end{equation}
\end{enumerate}	
\end{Proposition}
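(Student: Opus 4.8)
Here is the route I would follow. The solution $\overline u$ is constructed by an approximation scheme in the spirit of Crandall--Rabinowitz--Tartar \cite{crandal}, adapted to the nonlocal setting, and all the listed properties are then extracted from sharp sub- and supersolution bounds. For $n\in\N$ I first solve the regularized problems
\begin{equation*}
(-\De)^s u_n=\Big(u_n^{+}+\tfrac1n\Big)^{-q}\quad\text{in }\Om,\qquad u_n=0\quad\text{in }\R\setminus\Om.
\end{equation*}
Because the right-hand side is bounded by $n^{q}$, the truncated functional $J_n(u)=\tfrac12\|u\|^{2}-\int_{\Om}G_n(u)$, with $G_n'(t)=(t^{+}+\tfrac1n)^{-q}$, is coercive, weakly lower semicontinuous and strictly convex on $X_0$, hence has a unique minimizer $u_n$, which by the strong maximum principle is positive in $\Om$ and, by comparison, satisfies $u_n\le u_{n+1}$. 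I set $\overline u:=\sup_n u_n=\lim_n u_n$; the heart of the matter is the uniform (in $n$) two-sided bound $c\,\phi_q\le u_n\le C\,\phi_q$ in $\Om$.

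For the lower bound, $u_1$ solves an equation whose right-hand side is pinched between two positive constants, so comparison with the $s$-torsion function gives $u_n\ge u_1\ge c_1 d^{s}$ for all $n$, where $d(x)=\mathrm{dist}(x,\R\setminus\Om)$; this is $\ge c\,\phi_q$ when $q\le1$, while for $q>1$ one checks that a small multiple of $\phi_q$ is a subsolution, using the concavity of $t\mapsto t^{2/(q+1)}$ in the interior and, near $\pa\Om$, the boundary estimate
\begin{equation*}
(-\De)^s d^{\ba}\asymp\pm\,d^{\ba-2s}\quad(\ba\in(0,2s),\ \ba\neq s),\qquad (-\De)^s\big(d^{s}(\log(A/d))^{1/2}\big)\asymp d^{-s}(\log(A/d))^{-1/2},
\end{equation*}
the sign being $+$ for $\ba<s$ and $-$ for $\ba>s$. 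These estimates are exactly what make $\phi_q$ satisfy $(-\De)^s\phi_q\asymp\phi_q^{-q}$ near $\pa\Om$, so that for $q>1$ a large multiple of $\phi_q$ is a supersolution; for $q\le1$ (where $(-\De)^s\phi_q$ stays bounded) one instead first derives the uniform bound $\overline u\in L^{\infty}(\Om)$ by a truncated Moser iteration on the sets $\{u_n>k\}\Subset\Om$ (on which the singular term is $\le k^{-q}$), and then compares $\overline u$ with the solution of $(-\De)^s v=c_1^{-q}d^{-sq}$, whose boundary behaviour is again $\asymp\phi_q$ (with the logarithmic correction when $q=1$). In each instance the comparison is the weak comparison principle for singular problems: since $w$ dominates $u_n$ near $\pa\Om$, the set $\{u_n>w\}$ lies in $\Om$, there $u_n>w>0$, hence $(u_n+\tfrac1n)^{-q}<w^{-q}\le(-\De)^s w$, and testing with $(u_n-w)^{+}$ yields $\|(u_n-w)^{+}\|^{2}\le\ld u_n-w,\,(u_n-w)^{+}\rd\le0$, i.e.\ $u_n\le w$.

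Granting $c\,\phi_q\le\overline u\le C\,\phi_q$, the remaining points follow quickly. As $\overline u^{-q}\le(c\,\phi_q)^{-q}$ is bounded on compact subsets of $\Om$, monotone/dominated convergence lets me pass to the limit in the (very) weak formulation tested against functions vanishing near $\pa\Om$; interior Schauder-type estimates for $(-\De)^s$ then upgrade $\overline u$ to a classical solution of \eqref{si5}, and the bound gives $\overline u\in C^{+}_{\phi_q}$. The Hölder regularity \eqref{si4} follows by combining the two-sided bound with the boundary regularity theory for $(-\De)^s$, the three ranges of $q$ matching the three boundary profiles of $\phi_q$. For (i): since $\overline u$ vanishes outside $\Om$, $\overline u\in X_0$ iff $[\overline u]_{H^s(\Om)}<\infty$ and $\int_{\Om}\overline u^{2}d^{-2s}<\infty$, and for the profile $d^{2s/(q+1)}$ of $\phi_q$ (up to the logarithm at $q=1$) both hold precisely when $\tfrac{2s}{q+1}>s-\tfrac12$, i.e.\ $q(2s-1)<2s+1$. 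Uniqueness in that range uses the monotonicity of $t\mapsto t^{-q}$: any classical solution obeys the same two-sided bound, hence lies in $X_0$, so if $u,v$ both solve \eqref{si5}, testing with $(u-v)^{+}$ and with $(v-u)^{+}$ gives $\|(u-v)^{+}\|^{2}=\int_{\Om}(u^{-q}-v^{-q})(u-v)^{+}\le0$ and symmetrically, whence $u=v$.

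The principal obstacle is the sharp boundary analysis behind the barriers: proving $(-\De)^s d^{\ba}\asymp\pm d^{\ba-2s}$, and in particular the logarithmic refinement that governs the critical case $q=1$, requires a careful direct estimate of the singular integral near $\pa\Om$, not the softer interior machinery. A related subtlety is that $\overline u^{-q}\notin L^1(\Om)$ once $q$ is large, so the weak formulation must be read in the very weak sense --- testing only against functions decaying fast enough at $\pa\Om$ --- and every comparison step must be performed within that class, which is precisely what the comparison principle of Lemma~\ref{lemsi17} is built to handle.
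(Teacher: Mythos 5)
The paper offers no proof of Proposition \ref{propsi1}: it is quoted verbatim from \cite{adi}, so there is no internal argument to compare yours against. What you have written is, in substance, a reconstruction of the proof in \cite{adi}: monotone regularization \`a la \cite{crandal} via the problems $(-\De)^s u_n=(u_n^++\tfrac1n)^{-q}$, uniform two-sided barrier bounds $c\,\phi_q\le u_n\le C\,\phi_q$ obtained from the boundary asymptotics of $(-\De)^s d^{\ba}$ (with the logarithmic refinement at $q=1$), passage to the limit in the very weak formulation followed by interior regularity, and then the $X_0$-threshold and uniqueness read off from the boundary profile. The outline is sound; your arithmetic in (i) is correct ($\tfrac{2s}{q+1}>s-\tfrac12$ is exactly $q(2s-1)<2s+1$, and for $s\le\tfrac12$ the condition is vacuous so the Hardy-type characterization is only needed when $s>\tfrac12$), and you correctly identify the barrier computations as the real analytic content that must be supplied.

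One concrete slip: in the lower bound you assert $u_n\ge u_1\ge c_1 d^s\ge c\,\phi_q$ ``when $q\le 1$''. This fails at $q=1$, where $\phi_q=\phi_1\left(\log(A/\phi_1)\right)^{1/2}\asymp d^s\left(\log(A/d)\right)^{1/2}$, so $\phi_q/d^s\to\infty$ at $\pa\Om$ and no multiple of the torsion function dominates $c\,\phi_q$; the torsion bound alone does not place $\overline u$ in $C^+_{\phi_q}$ for $q=1$. The repair is already in your toolkit: your stated estimate $(-\De)^s\big(d^s(\log(A/d))^{1/2}\big)\asymp d^{-s}(\log(A/d))^{-1/2}\asymp\phi_q^{-1}$ shows that a small multiple of the logarithmic barrier is a subsolution of \eqref{si5} with $q=1$, exactly as in your $q>1$ argument; so the case split should be $q<1$ (torsion function suffices) versus $q\ge1$ (subsolution built from $\phi_q$). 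With that adjustment the proposal is a faithful and correct route to the cited result.
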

\begin{Remark}\label{remsi1}
	We remark that since $\overline{u} \in L^\infty(\Om) \cap C^+_{\phi_q}  \cap C^\ga(\R)$. So we can achieve the interior $C^\infty$ regularity. That is for any compact set $\Om^\prime \subset \Om$ we have $\overline{u} \in C^\infty(\Om^\prime)$. From this one can easily prove the fact that $\overline{u} \in H^s_{\text{loc}}(\Om)$. 
\end{Remark}
\begin{Lemma}\label{lemsi16}
(a)	If  $q(2s-1) \geq  (2s+1)$  then $\overline{u}^\ga \in X_0$ if and only if $\ga>\frac{(2s-1)(q+1)}{4s}$. Moreover the lower bound on $\ga$ is optimal in the sense that $u^\ga \not \in X_0$ if $\ga\leq \frac{(2s-1)(q+1)}{4s}$. \\
(b)  $(\overline{u}-\e)^+ \in X_0$ for all $\e>0$. 
\end{Lemma}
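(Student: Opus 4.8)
Write $d(x)=\operatorname{dist}(x,\partial\Om)$. First observe that in the regime $q(2s-1)\ge 2s+1$ of part (a) one necessarily has $s>1/2$ and $q>\frac{2s+1}{2s-1}>1$, and moreover $\frac{(2s-1)(q+1)}{4s}=\frac{q(2s-1)+(2s-1)}{4s}\ge\frac{(2s+1)+(2s-1)}{4s}=1$, so the exponent $\ga$ in (a) always satisfies $\ga>1$. By Proposition \ref{propsi1} and Remark \ref{remsi1}, $\overline u\in C^+_{\phi_q}\cap L^\infty(\Om)$ with $\phi_q=\phi_1^{2/(q+1)}$ and $\phi_1\asymp d^{s}$, hence $c_1\,d^{2s/(q+1)}\le\overline u\le c_2\,d^{2s/(q+1)}$ in $\Om$, while $\overline u\in C^\infty_{\mathrm{loc}}(\Om)\cap H^s_{\mathrm{loc}}(\Om)$. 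Since $\overline u$ is continuous, positive and bounded away from $0$ on compact subsets of $\Om$, integrability of powers of $\overline u$ over $\Om$ is a boundary issue, governed by $\int_\Om d^{\al}\,dx<\infty\iff\al>-1$. For part (b): as $\overline u\in C_0(\overline\Om)$ is positive in $\Om$ and vanishes on $\partial\Om$, the set $\{\overline u\ge\e\}$ is a compact subset of $\Om$ on a neighbourhood of which $\overline u$ is smooth, so $(\overline u-\e)^+=\max(\overline u-\e,0)$ is Lipschitz with compact support in $\Om$ and therefore lies in $X_0$.

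\textbf{Sufficiency in (a).} Assume $\ga>\frac{(2s-1)(q+1)}{4s}$ (so $\ga>1$). Set $w_\e:=(\overline u-\e)^+\in X_0\cap L^\infty(\Om)$ by part (b); since $t\mapsto t^{2\ga-1}$ is locally Lipschitz and vanishes at $0$, $\psi_\e:=w_\e^{\,2\ga-1}\in X_0$ and its support is a compact subset of $\Om$, so $\psi_\e$ is an admissible test function for the classical equation $(-\De)^s\overline u=\overline u^{-q}$ (the bilinear pairing is finite because $\overline u\in H^s_{\mathrm{loc}}(\Om)$, $\overline u\in L^\infty$, and $\operatorname{supp}\psi_\e$ is compact in $\Om$). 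Testing yields, up to a positive normalising constant,
\[
\iint_{\R\times\R}\frac{(\overline u(x)-\overline u(y))(\psi_\e(x)-\psi_\e(y))}{|x-y|^{N+2s}}\,dx\,dy\;=\;\int_\Om \overline u^{-q}\,w_\e^{\,2\ga-1}\,dx .
\]
Since $t\mapsto(t-\e)^+$ is non-decreasing and $1$-Lipschitz, $\overline u(x)-\overline u(y)$ and $w_\e(x)-w_\e(y)$ have the same sign with $|w_\e(x)-w_\e(y)|\le|\overline u(x)-\overline u(y)|$, whence the integrand on the left is $\ge (w_\e(x)-w_\e(y))(w_\e^{2\ga-1}(x)-w_\e^{2\ga-1}(y))$; combined with the elementary inequality $(a^\ga-b^\ga)^2\le\frac{\ga^2}{2\ga-1}(a-b)(a^{2\ga-1}-b^{2\ga-1})$ for $a,b\ge 0$ (Cauchy--Schwarz applied to $a^\ga-b^\ga=\ga\int_b^a t^{\ga-1}\,dt$), the left side is bounded below by $\frac{2\ga-1}{\ga^2}\,\|w_\e^{\,\ga}\|^2$ (recall $w_\e^{\,\ga}$ vanishes outside $\Om$). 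For the right side, $\overline u^{-q}w_\e^{2\ga-1}\le\overline u^{\,2\ga-1-q}$ pointwise, and $\overline u^{\,2\ga-1-q}\asymp d^{(2\ga-1-q)\frac{2s}{q+1}}$ near $\partial\Om$; a direct computation shows $(2\ga-1-q)\frac{2s}{q+1}>-1\iff\ga>\frac{(2s-1)(q+1)}{4s}$, so $\int_\Om\overline u^{\,2\ga-1-q}\,dx<\infty$. Hence $\|w_\e^{\,\ga}\|^2\le C$ uniformly in $\e>0$; letting $\e\to 0^+$, $w_\e^{\,\ga}\to\overline u^{\,\ga}$ a.e.\ and boundedly, so by Fatou's lemma $\|\overline u^{\,\ga}\|<\infty$, and since $\overline u^{\,\ga}\in L^2(\Om)$ and vanishes outside $\Om$ we conclude $\overline u^{\,\ga}\in X_0$.

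\textbf{Optimality in (a).} If $\overline u^{\,\ga}\in X_0$, then since $s>1/2$ the fractional Hardy inequality $\int_\Om v^2 d^{-2s}\,dx\le C\|v\|^2$ applies with $v=\overline u^{\,\ga}$ and gives $\int_\Om\overline u^{\,2\ga}\,d^{-2s}\,dx<\infty$. But $\overline u^{\,2\ga}d^{-2s}\asymp d^{\frac{4s\ga}{q+1}-2s}$ near $\partial\Om$, which is integrable there iff $\frac{4s\ga}{q+1}-2s>-1$, i.e.\ iff $\ga>\frac{(2s-1)(q+1)}{4s}$. Contrapositively, $\ga\le\frac{(2s-1)(q+1)}{4s}$ forces $\overline u^{\,\ga}\notin X_0$, which is exactly the asserted optimality of the lower bound; together with the previous paragraph this proves the "if and only if".

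\textbf{Main obstacle.} The delicate part is the sufficiency: one has to legitimise testing the singular equation against the power $w_\e^{\,2\ga-1}$ and the passage to the limit $\e\to0$, and one must check that the algebraic inequality and the integrability threshold for $\int_\Om\overline u^{\,2\ga-1-q}$ indeed reproduce the exponent $\frac{(2s-1)(q+1)}{4s}$. The optimality part is comparatively soft once one has the two-sided boundary behaviour of $\overline u$ from Proposition \ref{propsi1} and the fractional Hardy inequality; all remaining points are routine bookkeeping with these two ingredients.
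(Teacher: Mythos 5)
Your proof is correct, and on part (a) it rests on the same two pillars as the paper's: a Moser-type power test that reduces everything to the integrability of $\overline u^{\,2\ga-1-q}$, the two-sided boundary estimate $\overline u\asymp d^{2s/(q+1)}$ coming from $\overline u\in C^+_{\phi_q}$, and the fractional Hardy inequality for the optimality. The difference is in execution. The paper pairs $(-\De)^s\overline u^{\,\ga}$ with $\overline u^{\,\ga}$ directly, invoking the convexity inequality $(-\De)^s\xi(\overline u)\le\xi'(\overline u)(-\De)^s\overline u$; this is formally the same computation but presupposes that $\overline u^{\,\ga}$ is an admissible object before its membership in $X_0$ is known. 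You instead test with the truncated power $w_\e^{\,2\ga-1}$, $w_\e=(\overline u-\e)^+$, prove a uniform-in-$\e$ bound via the discrete inequality $(a^\ga-b^\ga)^2\le\frac{\ga^2}{2\ga-1}(a-b)(a^{2\ga-1}-b^{2\ga-1})$, and pass to the limit by Fatou; this buys a fully rigorous justification of the key step at the cost of a little bookkeeping (your verification that $\psi_\e$ is an admissible test function and that the bilinear pairing converges is the analogue of the paper's Lemma \ref{lemsi20}). For part (b) your route is genuinely different and more elementary: you observe that $\{\overline u\ge\e\}$ is compactly contained in $\Om$, where $\overline u$ is smooth, so $(\overline u-\e)^+$ is a compactly supported Lipschitz function and hence lies in $X_0$ for every $s\in(0,1)$; the paper instead compares the Gagliardo increments of $(\overline u-\e)^+$ with those of $\overline u^{\,\ga}$ via the mean value theorem, which makes (b) depend on (a). Your version of (b) is self-contained, which is in fact needed for your proof of (a) (you use $w_\e\in X_0$ there), so the logical order is clean. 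The one small point worth stating explicitly is that the necessity direction uses the fractional Hardy inequality $\int_\Om v^2 d^{-2s}\,dx\le C\|v\|^2$, which is valid here precisely because the hypothesis $q(2s-1)\ge 2s+1$ forces $s>1/2$; you note this, and the paper relies on it implicitly.
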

\begin{proof}
	(a) Let $\xi(x)= x^\ga,\; \ga >1$. Observe that $\xi$ is convex and differentiable function on $\mathbb    R^+$. Hence using this and the fact that $\phi_ 1 \in  C_{d^{s}}^+(\Om)$ and $\overline{u} \in C^+_{\phi_q}$, we deduce that 
	\begin{align*}
	\|\xi(\overline{u})\|^2= \ld (-\De)^s \xi(\overline{u}), \xi(\overline{u})\rd & \leq \int_{\Om} \xi(\overline{u})\xi^\prime(\overline{u}) (-\De)^s \overline{u}~dx\\
	& = \int_{\Om} \ga \overline{u}^{2\ga-q-1}~dx\\
	& \leq C \int_{\Om} d^{\frac{2s(2\ga-q-1)}{q+1}}~dx.
	\end{align*}
	We know that  $d^{\frac{2s(2\ga-q-1)}{q+1}} \in L^1(\Om)$ if  and only if $\ga > \frac{(2s-1)(q+1)}{4s}$. This settles first part of the proof. For the second part, let $\ga\leq \frac{(2s-1)(q+1)}{4s}$ and if possible  let $\overline{u}^\ga \in X_0$. 
	Consider 
	\begin{align*}
	\int_{\Om} \frac{\overline{u}^{2\ga}}{d^{2s}}~dx \leq C\int_{\Om} d^{\frac{4s\ga}{q+1}-2s}~dx = \infty
	\end{align*}
	 It contradicts the fact that $\overline{u}^\ga \in X_0$  and then satisfies the Hardy inequality.\\
	 (b) Let  $A= \{x: \overline{u}(x)>\e \}$ then 
	 \begin{align*}
	 \|(\overline{u}-\e)^+\|^2&  \leq  \int_{A}\int_{A}\frac{|\overline{u}(x)-\overline{u}(y)|^2}{|x-y|^{N+2s}}~dxdy +2 \int_{\R\setminus A} \int_{A}\frac{|\overline{u}(x)-\overline{u}(y)|^2}{|x-y|^{N+2s}}~dxdy\\
	 & \leq \kappa_1 \int_{A}\int_{A}\frac{|\overline{u}^\ga(x)-\overline{u}^\ga(y)|^2}{|x-y|^{N+2s}}~dxdy +2\kappa_2 \int_{\R\setminus A} \int_{A}\frac{|\overline{u}^\ga(x)-\overline{u}^\ga(y)|^2}{|x-y|^{N+2s}}~dxdy \\
	 &	\leq (\kappa_1+\kappa_2) \int_{Q}\frac{|\overline{u}^\ga(x)-\overline{u}^\ga(y)|^2}{|x-y|^{N+2s}}~dxdy < \infty. 
	 \end{align*}
By the mean value theorem and convexity arguments,  one can easily prove the existence of $\kappa_1, \kappa_2$  such  that  $\kappa_1, \kappa_2 \geq \e^{\ga -1}$.   Hence the proof is complete. \QED	
\end{proof}
The  energy functional associated to the probelm $(P_\la)$ is
\begin{align*}
I(u)= \frac12 \int_{Q} \frac{|u(x)-u(y)|^2}{|x-y|^{N+2s}}~dxdy - \frac{1}{1-q} \int_{\Om}|u|^{1-q}~dx - \frac{\la}{22^*_\mu} \iint_{\Om\times \Om}\frac{|u|^{2^*_\mu}|u|^{2^*_\mu}}{|x-y|^{\mu}}~dxdy. 
\end{align*}
Though the functional  $I$  is continuous on $X_0$ when  $0 < q < 1$ but if  $q \geq 1$, the functional $I$ is not  even finite at all points of $X_0$.  
Also $I$  it can be shown that $I$ is not  G\^ateaux differentiable at all points of $X_0$. 
The doubly nonlocal nature of the problem $(P_\la)$ and the lack of regularity of $I$ force to use to introduce a quite general definition of weak solution. The  Lemma \ref{lemsi16} motivates the following  definition of weak solution to the problem $(P_\la)$. 
\begin{Definition}\label{defisi1}
	A function $u \in H^s_{\text{loc}}(\Om)\cap L^{2^*_s}(\Om)$  is said to be a weak solution of $(P_\la)$ if  the following hold: 
	\begin{enumerate}
		\item [(i)] there exists $m_K>0$ such that $ u >m_K$ for any compact set $ K \subset \Om$. 
		\item [(ii)]  For any $ \phi \in C_c^\infty(\Om)$, 
		\begin{align*}
		\langle u, \phi \rangle= \int_{\Om} u^{-q}\phi ~dx+  \la\iint_{\Om\times \Om} \frac{u^{2^*_\mu}(x)u^{2^*_\mu-1}(y)\phi(y)}{|x-y|^\mu}  ~dxdy. 
		\end{align*}
		\item [(iii)] $(u-\e)^{+} \in X_0$ for all $\e>0$. 
	\end{enumerate}
\end{Definition}

\begin{Lemma}\label{lemsi20}
	Let $u$ be a weak  solution  to $(P_\la)$ as it is defined in Definition \ref{defisi1}. Then for  all compactly supported $0\leq v \in X_0\cap L^\infty(\Om)$, we have 
	\begin{align}\label{si83}
	\ld u, v \rd  -  \int_{ \Om} u^{-q}v~dx -\iint_{\Om\times \Om}\frac{u^{2^*_\mu}u^{2^*_\mu-1}v}{|x-y|^{\mu}}~dxdy=0. 
	\end{align}
\end{Lemma}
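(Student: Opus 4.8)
The plan is to pass from test functions in $C_c^\infty(\Om)$ to general compactly supported $v \in X_0 \cap L^\infty(\Om)$, $v \geq 0$, by a density-plus-truncation argument, using the local positivity (i) of the weak solution to control the singular term. Fix such a $v$ with $\operatorname{supp} v \subset K \Subset \Om$. By (i) there is $m_K > 0$ with $u \geq m_K$ on $K$, so $u^{-q} \leq m_K^{-q}$ on $K$; hence $u^{-q} v \in L^1(\Om)$ and in fact $u^{-q} \in L^\infty(K)$. First I would choose $\phi_n \in C_c^\infty(\Om)$ with $\operatorname{supp}\phi_n$ contained in a fixed compact neighbourhood $K'$ of $K$ inside $\Om$, $\phi_n \to v$ in $X_0$, and (after replacing $\phi_n$ by $(\phi_n)^+$ suitably truncated, using that $v \in L^\infty$) with $0 \leq \phi_n \leq \|v\|_{L^\infty(\Om)} + 1$ and $\phi_n \to v$ a.e.; this is the standard approximation in $H^s$ combined with a cutoff, legitimate because $v$ has compact support in $\Om$.

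Next I would pass to the limit in the identity (ii) written for $\phi_n$. For the left-hand side $\langle u, \phi_n\rangle$: split $\iint_Q$ according to whether the arguments lie in $K'$ or not. On $K' \times K'$ the integrand converges by $u \in H^s_{\mathrm{loc}}(\Om) \subset H^s(K')$ together with $\phi_n \to v$ in $H^s(K')$; on the ``mixed'' and ``far'' regions $Q \setminus (K'\times K')$, since $\phi_n$ and $v$ vanish outside $K'$, the relevant quantity is $\iint_{(\R\setminus K')\times K'} \frac{(u(x)-u(y))\phi_n(y)}{|x-y|^{N+2s}}\,dx\,dy$, and here one uses that $\operatorname{dist}(K', \partial K'') > 0$ for a slightly larger $K''$, $u \in L^{2^*_s}(\Om)$ and the finiteness $\|(u-\e)^+\| < \infty$ from (iii), so the kernel is integrable and dominated convergence applies. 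For the singular term, $|u^{-q}\phi_n| \leq m_{K'}^{-q}(\|v\|_\infty+1)\mathbf 1_{K'} \in L^1$, so dominated convergence gives $\int u^{-q}\phi_n \to \int u^{-q} v$. For the Choquard term, $u^{2^*_\mu} \in L^1(\Om)$ by $u \in L^{2^*_s}(\Om)$ (note $2^*_\mu < 2^*_s$ after one checks the exponent, or use Hardy--Littlewood--Sobolev directly), the convolution $\int_\Om \frac{u^{2^*_\mu}(x)}{|x-y|^\mu}\,dx$ is bounded on $K'$, and $u^{2^*_\mu-1}(y)\phi_n(y) \to u^{2^*_\mu-1}(y)v(y)$ dominated by an $L^1$ function on $K'$; so that term also converges.

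I expect the main obstacle to be the left-hand bilinear form: because $u$ is only in $H^s_{\mathrm{loc}}(\Om)$ and need not belong to $X_0$ (indeed Lemma~\ref{lemsi16} shows it may fail to), one cannot simply write $\langle u, \phi_n\rangle \to \langle u, v\rangle$ by continuity of the inner product on $X_0$. The resolution is exactly the regional splitting above: the ``dangerous'' part of the Gagliardo seminorm of $u$ is the one near $\partial\Om$, but the test functions are supported away from $\partial\Om$, so in every integral that actually appears at least one argument is confined to $K'$, and there the truncated function $(u-\e)^+ \in X_0$ (with $\e < m_{K''}$, so $u = (u-\e)^+ + \e$ on $K''$) furnishes the needed global integrability to justify dominated convergence. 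Once all three terms pass to the limit, the identity \eqref{si83} follows for nonnegative compactly supported $v \in X_0 \cap L^\infty(\Om)$. \QED
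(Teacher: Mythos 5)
Your proposal is correct and follows essentially the same route as the paper: approximate $v$ by uniformly bounded $C_c^\infty$ functions with supports in a fixed compact subset, split the Gagliardo form into the part where both variables lie in that compact set (handled by $u\in H^s_{\mathrm{loc}}(\Om)$ and strong $X_0$-convergence) and the mixed/far parts (handled by $u\in L^{2^*_s}(\Om)$, the uniform $L^\infty$ bound and dominated convergence), and pass to the limit in the singular and Choquard terms using the local lower bound $u\geq m_K$ and Hardy--Littlewood--Sobolev. The only cosmetic difference is that you additionally invoke condition (iii) for the far region, which the paper does not need at this point.
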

\begin{proof}
	 Let  $0\leq v \in X_0\cap L^\infty(\Om)$   be  compactly supported function. Then   there exists a sequence $v_n \in C_c^\infty(\Om)$  such that $v_n \geq 0, ~ K:= \cup_n \text{supp } v_n$ is contained in   compact set of $\Om ,~\{ |v_n|_\infty\} $ is bounded sequence and $v_n \ra v$ strongly in $X_0$.  Since $u$ is a weak solution to $(P_\la)$,  we have
	\begin{align}\label{si84}
	\ld u, v_n \rd - \int_{ \Om} u^{-q}v_n~dx -\iint_{\Om\times \Om}\frac{u^{2^*_\mu}u^{2^*_\mu-1}v_n}{|x-y|^{\mu}}~dxdy=0. 
	\end{align}
	Consider 
	\begin{align*}
	\int_{Q} \frac{(u(x)-u(y))(v_n(x)-v_n(y))}{|x-y|^{N+2s}}~dxdy & = 	\int_{K}\int_{K} \frac{(u(x)-u(y))(v_n(x)-v_n(y))}{|x-y|^{N+2s}}~dxdy\\
	&  +2 \int_{\Om\setminus K}\int_{K} \frac{(u(x)-u(y))v_n(x)}{|x-y|^{N+2s}}~dxdy\\
	&  +  2 \int_{\R\setminus \Om}\int_{\Om} \frac{u(x)v_n(x)}{|x-y|^{N+2s}}~dxdy\\
	&:= I+II+III.
	\end{align*}
		Now using the fact that $u \in H^s_{\text{loc}}(\Om)$ and the strong convergence of $v_n$, as $n \ra \infty$,  we obtain 
	\begin{align*}
I\ra \int_{K}\int_{K} \frac{(u(x)-u(y))(v(x)-v(y))}{|x-y|^{N+2s}}~dxdy.
	\end{align*}
	Next taking into account $u \in L^{2^*_s}(\Om)$ and the fact that $ v_n $ is  uniformly bounded in $L^\infty(\Om)$, by dominated convergence theorem, we deduce that 
	\begin{align*}
II\ra 	2 \int_{\Om\setminus K}\int_{K} \frac{(u(x)-u(y))v(x)}{|x-y|^{N+2s}}~dxdy.
	\end{align*}
	Similarly, $III\ra 2 \int_{\R\setminus \Om}\int_{\Om} \frac{u(x)v(x)}{|x-y|^{N+2s}}~dxdy$. Hence $\ld u , v_n \rd \ra \ld u , v \rd $ as $n \ra \infty$. Trivially $\int_{ \Om} u^{-q}v_n~dx \ra \int_{ \Om} u^{-q}v~dx$ as $n \ra \infty$. 
	Also  using the strong convergence of sequence $v_n$ and the fact that $u \in L^{2^*_s}(\Om)$, we infer that 
	\begin{align*}
	\iint_{\Om\times \Om}\frac{u^{2^*_\mu}u^{2^*_\mu-1}v_n}{|x-y|^{\mu}}~dxdy\ra \iint_{\Om\times \Om}\frac{u^{2^*_\mu}u^{2^*_\mu-1}v}{|x-y|^{\mu}}~dxdy.
	\end{align*}
	It implies that passing the limit as $n \ra \infty$ in \eqref{si84}, we have \eqref{si83} for  all compactly supported $0\leq v \in X_0\cap L^\infty(\Om)$. \QED
\end{proof}

In the direction of existence of solution to $(P_\la)$, we translate the problem $(P_\la)$ by the solution $\overline{u}$ of  problem \eqref{si5}.  
Consider the translated problem 
\begin{equation*}
(\widetilde{P_\la})\;
\left\{\begin{array}{rllll}
(-\De)^s u +\overline{u}^{-q}-(u+\overline{u})^{-q}
&= \la \left(\ds \int_{\Om}\frac{(u+\overline{u})^{2^*_\mu}(y)}{|x-y|^{\mu}}dy\right) (u+\overline{u})^{2^*_\mu-1},\; u>0  \; \text{in}\;
\Om,\\
u&=0 \; \text{ in } \R \setminus \Om.
\end{array}
\right.
\end{equation*}
Observe that $u+\overline{u} $ is a solution to $(P_\la)$ if and only if  $u\in X_0$ is a solution  to $(\widetilde{P_\la})$. Define the function $g: \Om \times \mathbb{R}\ra \mathbb{R}\cap \{ \infty\}$ as 
\begin{align*}
g(x,s)=  \left\{
\begin{array}{ll}
\overline{u}^{-q}-(s+\overline{u})^{-q} &  \text{ if }  s+\overline{u}>0 , \\
-\infty  & \text{ otherwise}  
\end{array} 
\right.
\end{align*}
and $G(x, y) = \int_{0}^y g(x,\tau)~d\tau$  for $(x,y ) \in \Om\times \mathbb{R}$. 
\begin{Definition}
	A function $u \in X$ is a subsolution (resp. supersolution) to $(\widetilde{P_\la})$ if the following holds
	\begin{enumerate}
		\item [(i)] $u^+ \in X_0$(resp. $u^- \in X_0$);
		\item  [(ii)] $g(\cdot, u) \in L^1_{\text{loc}}(\Om)$;
		\item [(iii)] For all $\phi \in C_c^\infty(\Om),\; \phi \geq 0$, we have 
		\begin{align*}
		\ld u, \phi \rd  + \int_{ \Om} g(x,u)\phi~dx - \iint_{\Om\times \Om}\frac{(u+\overline{u})^{2^*_\mu}(u+\overline{u})^{2^*_\mu-1}\phi}{|x-y|^{\mu}}~dxdy\leq 0 \; (\text{resp.}\geq 0  ). 
		\end{align*}
	\end{enumerate}
\end{Definition}

\begin{Definition}
	A function $u \in X_0$ is a  weak solution to $(\widetilde{P_\la})$ if it is both sub and supersolution and $
	u \geq 0 $ in $\Om$. 
\end{Definition}
%

\begin{Lemma}\label{lemsi18}
	Let $u \in X_0$ be a weak  solution  to $(\widetilde{P_{\la}})$. Then for any $v \in X_0$, we have 
	\begin{align}\label{si81}
		\ld u, v \rd  + \int_{ \Om} g(x,u)v~dx - \iint_{\Om\times \Om}\frac{(u+\overline{u})^{2^*_\mu}(u+\overline{u})^{2^*_\mu-1}v}{|x-y|^{\mu}}~dxdy=0. 
	\end{align}
\end{Lemma}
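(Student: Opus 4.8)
\emph{Plan.} I would first read off from the subsolution/supersolution inequalities that \eqref{si81} holds for test functions in $C_c^\infty(\Om)$, and then upgrade the admissible class to all of $X_0$ by showing that each of the three terms in \eqref{si81} defines a continuous linear functional on $X_0$ and invoking the density of $C_c^\infty(\Om)$ in $X_0$.

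Since a weak solution $u$ is simultaneously a subsolution and a supersolution of $(\widetilde{P_\la})$, combining the two inequalities in the definition yields, for every $\phi\in C_c^\infty(\Om)$ with $\phi\ge 0$,
\[
\ld u,\phi\rd+\int_{\Om}g(x,u)\phi\,dx-\iint_{\Om\times\Om}\frac{(u+\overline{u})^{2^*_\mu}(u+\overline{u})^{2^*_\mu-1}\phi}{|x-y|^{\mu}}\,dxdy=0 .
\]
All three terms are linear in $\phi$. Hence, for arbitrary $\phi\in C_c^\infty(\Om)$, picking $\eta\in C_c^\infty(\Om)$ with $0\le\eta\le 1$ and $\eta\equiv 1$ on $\mathrm{supp}\,\phi$, both $|\phi|_\infty\eta+\phi$ and $|\phi|_\infty\eta$ are nonnegative admissible test functions, and subtracting the corresponding identities shows that the displayed identity holds for all $\phi\in C_c^\infty(\Om)$.

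It then remains to show that $\Lambda(v):=\ld u,v\rd+\int_{\Om}g(x,u)v\,dx-\iint_{\Om\times\Om}\frac{(u+\overline{u})^{2^*_\mu}(u+\overline{u})^{2^*_\mu-1}v}{|x-y|^{\mu}}\,dxdy$ extends to a bounded linear functional on $X_0$. The Dirichlet term satisfies $|\ld u,v\rd|\le\|u\|\|v\|$ since $u\in X_0$. For the Choquard term, $u\in X_0\hookrightarrow L^{2^*_s}(\Om)$ and $\overline{u}\in L^\infty(\Om)$ (Proposition \ref{propsi1}) give $u+\overline{u}\in L^{2^*_s}(\Om)$, so the Hardy--Littlewood--Sobolev inequality together with Hölder's inequality bound it by $C\big(|u+\overline{u}|_{2^*_s}\big)^{2\cdot 2^*_\mu-1}|v|_{2^*_s}\le C\|v\|$. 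For the singular term, $u\ge 0$ in $\Om$ forces $0\le g(x,u)=\overline{u}^{-q}-(u+\overline{u})^{-q}\le\overline{u}^{-q}$; combining this with the barrier estimate $\overline{u}\ge c\,\phi_q$ (and, where $q$ is large, with the mean value bound $g(x,u)\le q\,\overline{u}^{-q-1}u$ and the boundary decay of $u$) one obtains $g(x,u)\le C\,d^{-\beta}$ for a suitable $\beta<s+\tfrac12$, and then, exactly as in the proof of Lemma \ref{lemsi16},
\[
\int_{\Om}g(x,u)|v|\,dx\le C\int_{\Om}d^{\,s-\beta}\,\frac{|v|}{d^{\,s}}\,dx\le C\Big(\int_{\Om}d^{\,2(s-\beta)}\,dx\Big)^{\!1/2}\Big(\int_{\Om}\frac{v^{2}}{d^{\,2s}}\,dx\Big)^{\!1/2}\le C\|v\| ,
\]
the last inequality being the fractional Hardy inequality. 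Thus $\Lambda$ is continuous on $X_0$ and vanishes on $C_c^\infty(\Om)$; since $C_c^\infty(\Om)$ is dense in $X_0$, we conclude $\Lambda\equiv 0$ on $X_0$, which is precisely \eqref{si81}.

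I expect the only genuine difficulty to be the estimate for the singular term: one must exploit the positivity $g(x,u)\ge 0$, so that the difference $\overline{u}^{-q}-(u+\overline{u})^{-q}$ is absorbed by $\overline{u}^{-q}$, together with the precise boundary behaviour of $\overline{u}$ (and, for large $q$, of $u$) in order to produce a distance weight $d^{-\beta}$ with $\beta<s+\tfrac12$ for which Hardy's inequality closes the bound. Everything else---combining the two inequalities, removing the sign restriction on the test functions, and the final density argument---is routine once that uniform bound is in place.
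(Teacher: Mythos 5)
Your route is genuinely different from the paper's, and as written it has a gap precisely in the step you yourself flag as the only difficulty. The direct bound $0\le g(x,u)\le \overline{u}^{-q}$ together with $\overline{u}\ge c\,\phi_q\sim c\,d^{2s/(q+1)}$ (for $q>1$) gives $g(x,u)\le C d^{-\beta}$ with $\beta=\tfrac{2sq}{q+1}$, and the requirement $\beta<s+\tfrac12$ needed to close your Hardy--Cauchy--Schwarz estimate is \emph{equivalent} to $q(2s-1)<2s+1$ --- exactly the restriction this paper is written to remove. Your proposed remedy for large $q$, namely $g(x,u)\le q\,\overline{u}^{-q-1}u$ combined with ``the boundary decay of $u$'', presupposes a pointwise estimate such as $u\le C d^{s}$ near $\pa\Om$. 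No such estimate is available at this stage: the hypothesis is only $u\in X_0$, and the boundary behaviour of solutions is established much later (Lemma \ref{lemsi2}, via Lemma \ref{lemsi21}), by arguments that themselves invoke the assertions of this lemma, so leaning on it here is circular. (The estimate \emph{can} be repaired without any pointwise information on $u$: since $\overline{u}^{-q-1}\le C d^{-2s}$, one has $\int_\Om g(x,u)|v|\,dx\le Cq\int_\Om \frac{u}{d^{s}}\frac{|v|}{d^{s}}\,dx\le C\|u\|\,\|v\|$ by Cauchy--Schwarz and Hardy applied to \emph{both} $u$ and $v$; but that is not the argument you wrote, and it still needs the fractional Hardy inequality, hence a caveat at $s=\tfrac12$.)

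The paper sidesteps all quantitative boundary estimates. For $v\ge 0$ it takes an \emph{increasing} sequence of compactly supported $v_n\in X_0$ with $v_n\to v$ in $X_0$, approximates each $v_n$ by uniformly bounded, compactly supported $\phi_n^k\in C_c^\infty(\Om)$, passes to the limit in $k$ using only $u\in X_0$ and $L^{2^*_s}$ bounds, and then lets $n\to\infty$: the terms $\ld u,v_n\rd$ and the Choquard term converge by continuity, while $\int_\Om g(x,u)v_n\,dx\to\int_\Om g(x,u)v\,dx$ by \emph{monotone convergence}, using only $g(x,u)\ge 0$ and $v_n\nearrow v$. The finiteness of the limit (i.e.\ $g(\cdot,u)v\in L^1(\Om)$) then falls out of the identity itself rather than being proved a priori; the case of signed $v$ is handled by splitting $v=v^+-v^-$. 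This is why the paper never needs $v\mapsto\int_\Om g(x,u)v\,dx$ to be a bounded functional on $X_0$ before the fact, whereas your density argument does. Your first step (deducing the identity for all signed $\phi\in C_c^\infty(\Om)$ from the sub/supersolution inequalities) and your treatment of the Dirichlet and Choquard terms are fine; the singular term is the one place where your argument, as stated, does not cover all $q>0$.
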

\begin{proof}
 Let $0\leq v \in X_0$ then  by \cite[Lemma 3.1]{GMS}, there exists an increasing  sequence $\{v_n  \} \in X_0$ such that  $v_n$ has a compact support and $v_n \ra v $ strongly in $X_0$. For each $n$, there exists a sequence $\phi_n^k \in C_c^\infty(\Om)$  such that $\phi_n^k \geq 0, ~ \cup_k \text{supp} \phi_n^k$ is contained in   compact set of $\Om,~\{ |\phi_n^k|_\infty\} $ is bounded sequence and $\phi_n^k \ra v_n$ strongly in $X_0$. Since $u$ is a weak solution to $(\widetilde{P_{\la}})$,  we have
		\begin{align*}
	\ld u, \phi_n^k \rd  + \int_{ \Om} g(x,u)\phi_n^k~dx - \iint_{\Om\times \Om}\frac{(u+\overline{u})^{2^*_\mu}(u+\overline{u})^{2^*_\mu-1}\phi_n^k}{|x-y|^{\mu}}~dxdy=0. 
	\end{align*}
	Using the fact that $\phi_n^k \ra v_n$ strongly in $X_0$ as $n \ra \infty$, we deduce that  
	\begin{align*}
	\ld u, v_n \rd  + \int_{ \Om} g(x,u)v_n~dx - \iint_{\Om\times \Om}\frac{(u+\overline{u})^{2^*_\mu}(u+\overline{u})^{2^*_\mu-1}v_n}{|x-y|^{\mu}}~dxdy=0. 
	\end{align*}
	Now by using the dominated convergence theorem and the strong convergence of the sequence $v_n$ in $X_0$, we get $g(x,u)v \in L^1(\Om)$ and  we have \eqref{si81} for any $0\leq v \in X_0$. For any $v \in X_0$, $v= v^+-v^-$. Employing the above procedure for $v^+$ and $v^-$ separately, we obtain the desired result. Hence the proof.	 \QED
\end{proof}
With this functional framework we record now the statement of our main Theorems. 
\begin{Theorem}\label{thmsi1}
	 Let $\mu \leq \min\{ 4s,N\}$. There exists a $\La >0$ such that
	\begin{enumerate}
		\item For every $\lambda \in (0, \La)$ the problem $(P_\lambda)$ admits two solutions in $ C_{\phi_q}^+(\Om)\cap L^\infty(\Om)$. 
		\item For $\lambda = \La$ there exists a solution in $ C_{\phi_q}^+(\Om)\cap L^\infty(\Om)$.
		\item For $\lambda > \La$, there exists no solution.
	\end{enumerate}
Moreover, solution belongs to $ X_0$ if and only  if $q(2s-1)<(2s+1)$.
\end{Theorem}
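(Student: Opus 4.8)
The plan is to run the Ambrosetti--Brezis--Cerami global multiplicity scheme on the \emph{translated} problem $(\widetilde{P_\la})$, whose energy functional
\[
\widetilde I_\la(u)=\frac12\|u\|^2+\int_\Om G(x,u)\,dx-\frac{\la}{2\cdot2^*_\mu}\iint_{\Om\times\Om}\frac{(u+\overline u)^{2^*_\mu}(u+\overline u)^{2^*_\mu}}{|x-y|^\mu}\,dx\,dy
\]
splits on $X_0$ as a $C^1$ functional plus the convex lower semicontinuous term $\int_\Om G(x,\cdot)\,dx$ (since $g(x,\cdot)$ is nondecreasing, $G(x,\cdot)$ is convex), hence is a Szulkin functional to which the nonsmooth critical point theory of Section 2 applies; recall that $u+\overline u$ solves $(P_\la)$ iff $u\in X_0$ solves $(\widetilde{P_\la})$ and that, by Lemmas \ref{lemsi18} and \ref{lemsi20}, weak solutions can be tested against arbitrary $X_0$-functions. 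Put $\La:=\sup\{\la>0:\ (P_\la)\ \text{has a weak solution}\}$. To bound $\La$ from above, let $u$ solve $(P_\la)$: by the very weak comparison principle of Lemma \ref{lemsi17} applied with the purely singular problem \eqref{si5}, $u\ge\overline u$, so $h(x):=\int_\Om\overline u^{2^*_\mu}(y)|x-y|^{-\mu}\,dy$ has $h_0:=\inf_{\overline\Om}h>0$ and $(-\De)^su\ge u^{-q}+\la h_0\,u^{2^*_\mu-1}$ in $\Om$; since $2^*_\mu-1>1$, for $\la$ large one has $t^{-q}+\la h_0\,t^{2^*_\mu-1}\ge(\la_1+1)t$ for all $t>0$ ($\la_1$ the first eigenvalue of $(-\De)^s$ on $X_0$), and testing against $\phi_1>0$ (a routine truncation handles its non-compact support) gives $\la_1\int_\Om u\phi_1\ge(\la_1+1)\int_\Om u\phi_1$, which is absurd; hence $\La<\infty$. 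That $\La>0$ follows from a minimization on a small ball of $X_0$ when $\la$ is small: $\widetilde I_\la(0)=-\frac{\la}{2\cdot2^*_\mu}\iint_{\Om\times\Om}\overline u^{2^*_\mu}(x)\overline u^{2^*_\mu}(y)|x-y|^{-\mu}\,dx\,dy<0$ while $\widetilde I_\la>\widetilde I_\la(0)$ on a suitable sphere $\{\|u\|=\rho\}$, so Ekeland's principle yields a minimizing Palais--Smale sequence at a negative level; this level lies below the compactness threshold of $\widetilde I_\la$ (controlled by $S_{H,L}$ via a Brezis--Lieb splitting of the Choquard term), so the sequence converges to a nonnegative minimizer $u_\la$ solving $(\widetilde{P_\la})$, and $u_\la+\overline u$ solves $(P_\la)$.

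\emph{First solution on $(0,\La)$.} Solvability is monotone in $\la$: if $(P_{\la'})$ has a solution $w$ and $0<\la<\la'$, then $0$ is a subsolution and $w-\overline u$ a supersolution of $(\widetilde{P_\la})$ with $0\le w-\overline u\in X_0$, and minimizing $\widetilde I_\la$ over the closed convex set $\{v\in X_0:\ 0\le v\le w-\overline u\}$ produces a weak solution $u_\la$ of $(\widetilde{P_\la})$. Arguing as in \cite{hirano,GMS}, one shows $u_\la$ is in fact a \emph{local} minimizer of $\widetilde I_\la$ in the $X_0$-topology: by the regularity step below $u_\la+\overline u\in C^+_{\phi_q}(\Om)$, so the order constraint is inactive near $u_\la$, and the $C_{\phi_q}$-versus-$X_0$ local minimizer comparison removes the constraint. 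Taking $w$ to be the minimal solution at a level $\la'\in(\la,\La)$ gives, for every $\la\in(0,\La)$, a first solution of $(P_\la)$, and the minimal solutions increase with $\la$.

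\emph{Second solution via nonsmooth mountain pass.} Since $2\cdot2^*_\mu>2$, $\widetilde I_\la(u_\la+tw)\to-\infty$ as $t\to+\infty$ for a fixed $0\le w\in X_0\setminus\{0\}$, so $\widetilde I_\la$ has the mountain-pass geometry around $u_\la$; let $c_\la$ be the corresponding Szulkin min--max level. The crux is the strict estimate
\[
c_\la<\widetilde I_\la(u_\la)+\frac{N+2s-\mu}{2(2N-\mu)}\,S_{H,L}^{\frac{2N-\mu}{N+2s-\mu}}\,\la^{-\frac{N-2s}{N+2s-\mu}},
\]
the right-hand level being that below which every Cerami sequence for $\widetilde I_\la$ is relatively compact (established through a Brezis--Lieb / concentration-compactness analysis of the doubly nonlocal critical term together with Lemma \ref{lemsi18}). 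One proves it by evaluating $\widetilde I_\la$ along $u_\la+tU_\e$, where $U_\e$ are the rescaled, truncated extremals of $S_{H,L}$ from Lemma \ref{lulem13}, and estimating carefully the interaction of $U_\e$ with both $u_\la$ and $\overline u$; the hypothesis $\mu\le\min\{4s,N\}$ is precisely what makes these cross terms subdominant and forces the strict inequality. A nonsmooth deformation argument then produces a Cerami sequence at level $c_\la$, which by the preceding compactness converges, along a subsequence, to a second solution $v_\la$ of $(\widetilde{P_\la})$ with $\widetilde I_\la(v_\la)=c_\la>\widetilde I_\la(u_\la)$; hence $u_\la+\overline u\ne v_\la+\overline u$ are two solutions of $(P_\la)$.

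\emph{Endpoint $\la=\La$, regularity, and the main difficulty.} Taking $\la_n\uparrow\La$ and the increasing family of minimal solutions $u_{\la_n}$ of $(\widetilde{P_{\la_n}})$, the bound $\widetilde I_{\la_n}(u_{\la_n})\le\widetilde I_{\la_n}(0)<0$ together with the equation tested against $u_{\la_n}$ yields a uniform $X_0$-bound; passing to a weak limit and to the limit in the weak formulation (monotone convergence for the singular term, weak continuity of the sub-threshold Choquard term) gives a solution at $\la=\La$, while nonexistence for $\la>\La$ is immediate. For an arbitrary solution $u$ of $(P_\la)$: $u\ge\overline u$ by Lemma \ref{lemsi17}, and an $L^\infty(\Om)$ estimate follows from the relation between $(P_\la)$ and $(\widetilde{P_\la})$ together with a Moser iteration for the fractional Choquard term, so the right-hand side of $(P_\la)$ behaves like $u^{-q}$ near $\pa\Om$ and is bounded in the interior; Proposition \ref{propsi1} and Lemma \ref{lemsi17} then give $u\in C^+_{\phi_q}(\Om)$, and Proposition \ref{propsi1}(i) with Lemma \ref{lemsi16} give $u\in X_0$ exactly when $q(2s-1)<(2s+1)$. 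The main obstacle is the strict sub-threshold estimate on $c_\la$: it demands delicate asymptotics of the interaction between the concentrating Choquard bubbles, the singular profile $\overline u$, and the local minimizer $u_\la$ — this is where $\mu\le\min\{4s,N\}$ genuinely enters — and must be carried out in the Szulkin (nonsmooth) setting, since $\widetilde I_\la$ is not $C^1$ and the usual mountain-pass and compactness lemmas must be replaced by nonsmooth versions.
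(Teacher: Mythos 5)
Your proposal follows the same overall architecture as the paper: translate by $\overline{u}$, view $\mc J$ as a Szulkin functional, build the first solution for $\la\in(0,\La)$ by minimizing over the order interval determined by the subsolution $0$ and the supersolution coming from a solution at some $\la'\in(\la,\La)$, and produce the second solution by a nonsmooth mountain pass at a level strictly below $\mc J_{K_{u_\la}}(u_\la)+\frac12\bigl(\frac{N-\mu+2s}{2N-\mu}\bigr)S_{H,L}^{\frac{2N-\mu}{N-\mu+2s}}\la^{-\frac{N-2s}{N-\mu+2s}}$ using the truncated bubbles $\varPsi_\e$. Several sub-arguments differ harmlessly and are legitimate alternatives: your eigenvalue-comparison proof that $\La<\infty$ (which indeed uses $\mu\le 4s$ to get $2^*_\mu\ge 2$; the testing against $\phi_1$ should be done on the translated problem, where $u-\overline{u}\in X_0$ and Lemma \ref{lemsi18} applies, rather than on $(P_\la)$ itself) replaces the paper's normalization of $\la_n^{-1/2}u_{\la_n}$; your small-ball/Ekeland proof that $\La>0$ replaces the minimization over $\{0\le u\le\vartheta\}$ of Lemma \ref{lemsi13} (you still need to check nonnegativity of the minimizer, which the order interval gives for free); and your monotone-limit argument at $\la=\La$ supplies a step the paper's own proof of Theorem \ref{thmsi1} leaves implicit.

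The one step whose stated justification would fail is the local-minimizer claim for $u_\la$. You argue that since $u_\la+\overline{u}\in C^+_{\phi_q}(\Om)$ ``the order constraint is inactive near $u_\la$'' and then invoke a $C_{\phi_q}$-versus-$X_0$ local minimizer comparison. Neither half is available here: in the $X_0$ topology every neighborhood of $u_\la$ contains functions exceeding the supersolution $w_2=u_{\la'}$ on sets of small measure, so the constraint is never inactive; and the Brezis--Nirenberg type ``H\"older implies Sobolev local minimizer'' theorem does not apply off the shelf to $\mc J$, which is not $C^1$ and, for $q$ large, is $+\infty$ on a large part of $X_0$ (indeed $D(\mc J)\subsetneq X_0$). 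This is precisely the difficulty the paper's Theorem \ref{thmsi2} is designed to handle: it estimates $\mc J_{K_{w_1}}(v)-\mc J_{K_{w_1}}(\sigma(v))$ with $\sigma(v)=\min\{v,w_2\}$ quantitatively, using the strict supersolution property of $w_2$ through the functional $\varXi$ and the Hardy--Littlewood--Sobolev inequality, to show directly that the order-interval minimizer is an $X_0$-local minimizer of $\mc J_{K_{w_1}}$. Your citation of \cite{hirano,GMS} points to the right mechanism, but the mechanism you actually describe is not it; as written, this step is a gap. The remainder of the proposal (compactness threshold, energy estimate along $u_\la+t\varPsi_\e$ under $\mu\le\min\{4s,N\}$, regularity via Lemma \ref{lemsi17}, Proposition \ref{propsi1} and Lemma \ref{lemsi16}) matches the paper.
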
 
Concerning the H\"older and Sobolev regularity of solutions we have the following Theorem. 
\begin{Theorem}\label{thmsi5}
Let $\mu \leq \min\{ 4s,N\}$. Let $q>0,\; \la \in (0,\La)$. Then
any weak solution in the sense of  Definition \ref{defisi1} is classical and belongs to $C^\ga(\R)$ where $\ga$ is defined \eqref{si4}. Furthermore any weak solution satisfies the statements of Lemma \ref{lemsi16}.
\end{Theorem}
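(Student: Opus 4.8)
The plan is to bootstrap from the weak formulation in Definition \ref{defisi1} to full classical regularity, using the comparison principle of Lemma \ref{lemsi17} as the main engine. First I would split the right-hand side of $(P_\la)$ as $f(x) := u^{-q} + \la\big(\int_\Om |u|^{2^*_\mu}(y)|x-y|^{-\mu}dy\big)|u|^{2^*_\mu-2}u$ and show that, because any weak solution $u$ lies in $L^\infty(\Om)$ (which follows from the relation between solutions of $(P_\la)$ and $(\widetilde{P_\la})$ described in Section 2, together with the Hardy--Littlewood--Sobolev inequality applied to the Choquard term as in Theorem \ref{thmsi1}), the Choquard term is bounded on $\Om$. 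Hence $u$ solves $(-\De)^s u = u^{-q} + h(x)$ with $h \in L^\infty(\Om)$, $h \geq 0$. The singular part $u^{-q}$ is controlled from the lower bound $u > m_K$ on compacts and, near $\partial\Om$, from the sub/supersolution comparison with the barrier $\phi_q$.

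Next I would compare $u$ with $\overline{u}$, the purely singular solution of \eqref{si5} given by Proposition \ref{propsi1}. Since $h \geq 0$, $u$ is a (very weak) supersolution of \eqref{si5}, so by Lemma \ref{lemsi17} we get $u \geq \overline{u} \geq c\,\phi_q$ in $\Om$; combined with $u \in L^\infty(\Om)$ this yields $u \in C^+_{\phi_q}(\Om)$ and, crucially, $u^{-q} \leq c^{-q}\phi_q^{-q}$, which has the exact same boundary growth as the right-hand side handled in \cite{adi}. Then I would invoke the regularity theory of \cite{adi} (and Remark \ref{remsi1} for interior $C^\infty$) for the equation $(-\De)^s u = u^{-q} + h$: interior Schauder/Calderón--Zygmund estimates for the fractional Laplacian give $u \in C^\infty_{\mathrm{loc}}(\Om)$, and the boundary behaviour analysis of \cite{adi} gives $u \in C^\ga(\R)$ with $\ga$ as in \eqref{si4}. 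This also shows $u$ satisfies the pointwise equation classically in $\Om$. Finally, the statement that $u$ satisfies the conclusions of Lemma \ref{lemsi16} follows because $u$ and $\overline{u}$ have the same two-sided bounds $c_1\phi_q \leq u \leq c_2$ near $\partial\Om$: the computations in the proof of Lemma \ref{lemsi16} depend only on this boundary behaviour, so $u^\ga \in X_0 \iff \ga > \frac{(2s-1)(q+1)}{4s}$ (when $q(2s-1)\geq 2s+1$) and $(u-\e)^+ \in X_0$ for all $\e>0$.

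The main obstacle I anticipate is the passage from the very weak formulation (test functions only in $C_c^\infty(\Om)$, with $u$ merely in $H^s_{\mathrm{loc}}$) to the comparison inequality $u \geq \overline{u}$: one must legitimately use $\overline{u}$ — or rather truncations $(\overline{u}-\e)^+$, which lie in $X_0$ by Lemma \ref{lemsi16}(b) — as a test object against $u$ despite $\overline{u} \notin X_0$ in general, and control the nonlocal tail interactions (the $H^s_{\mathrm{loc}}$ phenomenon flagged in the introduction). This is exactly what Lemma \ref{lemsi17} is built to do, so the real work is checking its hypotheses: that $u$ is an admissible very weak supersolution of \eqref{si5} and $\overline{u}$ an admissible subsolution, which reduces to the local integrability of $u^{-q}$ and the convergence arguments already carried out in Lemma \ref{lemsi20}. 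Once $u \geq \overline{u}$ is in hand, the remaining steps are a routine combination of known fractional elliptic regularity and the barrier estimates, with $\mu \leq \min\{4s,N\}$ ensuring the Choquard term does not destroy boundedness.
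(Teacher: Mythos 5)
Your overall route is the one the paper takes: identify a very weak solution $z$ of $(P_\la)$ with $\overline{u}$ plus an $X_0$-solution of the translated problem (this is Lemma \ref{lemsi21}, which itself already requires solving an auxiliary problem with frozen Choquard term and invoking Lemma \ref{lemsi17}), deduce $z\in L^\infty(\Om)$ from Lemma \ref{lemsi15}, compare with $\overline{u}$, and then hand the problem to the regularity theory of \cite{adi} and to the computations of Lemma \ref{lemsi16}. Two of your steps, however, do not work as written.

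First, the claim that $u\geq \overline{u}\geq c\,\phi_q$ ``combined with $u\in L^\infty(\Om)$ yields $u\in C^+_{\phi_q}(\Om)$'' is false: membership in $C_{\phi_q}(\Om)$ requires the \emph{upper} decay estimate $u(x)\leq c\,\phi_q(x)$ near $\pa\Om$ (e.g. $u\lesssim d^{2s/(q+1)}$ when $q>1$, $u\lesssim d^s$ when $q<1$), which is not implied by boundedness. This upper bound is also what produces the sharp H\"older exponent $\ga$ in \eqref{si4}; without it you only control $u$ from below. The paper closes this by a second application of Lemma \ref{lemsi17}: since the Choquard term is bounded by $\la c$ with $c=C^*|u|_\infty^{2\cdot2^*_\mu-1}$, the solution $u$ is a subsolution of $(-\De)^s w=w^{-q}+\la c$, whose unique solution $\hat{u}$ (from \cite{adi}) lies in $C^+_{\phi_q}(\Om)$; comparison gives $\overline{u}\leq u\leq \hat{u}$ and hence the two-sided $\phi_q$-bounds. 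You need to add this upper barrier. Second, and for the same reason, your assertion that the conclusions of Lemma \ref{lemsi16} follow from ``$c_1\phi_q\leq u\leq c_2$'' is insufficient: the key estimate $\int_\Om u^{2\ga-q-1}\,dx\leq C\int_\Om d^{\frac{2s(2\ga-q-1)}{q+1}}\,dx$, as well as the optimality computation $\int_\Om u^{2\ga}d^{-2s}\,dx$, uses the two-sided bound $c_1\phi_q\leq u\leq c_2\phi_q$ (the upper $\phi_q$-bound when the exponent $2\ga-q-1$ is positive, the lower one when it is negative, and both signs occur as $\ga$ varies). Once $u\leq\hat{u}$ is in place both issues disappear and the rest of your plan — checking the hypotheses of Lemma \ref{lemsi17} via Lemma \ref{lemsi20} and Lemma \ref{lemsi16}(b), and the interior smoothness as in Remark \ref{remsi1} — matches the paper's argument.
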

\begin{Remark}
		We point out that regularity results contained in Theorem \ref{thmsi5} are much stronger
	as compared to those obtained in \cite{adi, gts1} where regularity of continuous solutions are only investigated. 
\end{Remark}

\subsection{Notions of nonsmooth Analysis}
 In this subsection we record some basic definitions,  observations and linking theorem to nonsmooth functionals. We remark that in case of $q(2s-1)<(2s+1)$, one can adapt the  variational techniques of the \cite{hai,gts1} to prove the  global multiplicity result as in Theorem \ref{thmsi1} but to incorporate the case of $q$ large we adopt the following notions of non-smooth analysis. 
\begin{Definition}
	Let $H$ be a Hilbert space and $J : H \ra (-\infty, \infty]$ be a proper (i.e. $J \not \equiv \infty$)  lower semicontinuous
	functional.
	\begin{enumerate}
		\item [(i)]Let $D(J) = \{u\in  H\; : \; J(u) < \infty\}$ be the domain of $J$. For every $u  \in D(J)$, we define the Fr\'echet sub-differential	of $J$ at $u$ as the set
		\begin{align*}
		\pa^-J(u) = \left\{ z \in H \; :\;  \varliminf_{v \ra u }\frac{ J(v)-J(u)- \ld z,v-u\rd}{\|v-u\|_{H}}\geq 0   \right\}. 
		\end{align*}
		\item [(ii)] For each  $u \in  H$, we define
		\begin{align*}
	\vertiii{\pa^-J(u)} = \left\{
		\begin{array}{ll}
		\min \{  \|z\|_H\; :\; z \in \pa^- J(u) \}  &  \text{ if } \pa^- J(u) \not = \emptyset, \\
		\infty & \text{ if } \text{ if } \pa^- J(u)  = \emptyset. \\
		\end{array} 
		\right. 
		\end{align*}
		We know that $ \pa^-J(u)$  is a closed convex set which may be empty. If $u \in  D(J)$ is a local minimizer for $J$, then it
		can be seen that $0 \in \pa^-J(u)$.  
	\end{enumerate}
\end{Definition}

\begin{Remark}
	We remark that if $J_0  :  H \ra (-\infty, \infty]$ be a proper,  lower semicontinuous, convex 
	functional, $J_1 : H \ra \mathbb{R}$  is a $C^1$ functional and $J= J_1 + J_0$, then 
$\pa^{-}J(u) = \na J_1(u) + \pa J_0(u)$  for every $ u \in D(J) = D(J_0)$, where
	$\pa J_0$  denotes the usual subdifferential of the convex functional $J_0$. Thus, $u$ is said to be a critical point of $J$ if
	$ u \in  D(J_0)$  and for every $ v \in  H$, we have
	$ \ld \na J_1(u), v-u\rd +J_0(v) -J_0(u)\geq 0$. 
\end{Remark}
\begin{Definition}
	For a proper, lower semicontinuous functional $J :  H \ra (-\infty, \infty]$, we say that $J$ satisfies Cerami's
	variant of the Palais-Smale condition at level $c$ (in short, $J$ satisfies $(\text{CPS})c$, if any sequence $\{z_n\} \subset  D(J)$
	such that $ J(z_n) \ra c $ and $(1 + z_n) \vertiii{\pa^-J(z_n)} \ra 0$  has a strongly convergent subsequence in $H$.
\end{Definition}
Analogous to the mountain pass theorem, we have the following linking theorem for non-smooth functionals.

\begin{Theorem}\label{thmsi4} \cite{sz}
	Let $H$ be a Hilbert space. Assume $J= J_0+J_1$, where $J_0 : H \ra (-\infty, \infty]$  is
	a proper, lower semicontinuous, convex functional and $J_1 : H \ra \mathbb{R}$ is a $C^1$-functional. Let $B^N , S^{N-1}$ denote
	the closed unit ball and its boundary in $\R$ respectively. Let $\varphi : S^{N-1} \ra D(J)$ be a  continuous function such
	that 
	\begin{align*}
	\Sigma = \{  \psi \in C(B^N, D(J))\; :\; \psi|_{S^{N-1}} = \varphi  \} \not = \emptyset.
	\end{align*}
	Let $A$ be a relatively closed subset of $D(J)$ such that 
	\begin{align*}
	A \cap \varphi(S^{N-1}) = \emptyset, \quad 	A \cap \varphi(B^N) \not =  \emptyset \text{ for all } \psi \in \Sigma \text{ and } \quad \inf J(A)\geq \sup J(\varphi(S^{N-1})). 
	\end{align*}
	Define $c = \inf_{\psi \in \Sigma} \sup_{x \in B^N} J(\psi(x))$. 
	Assume that $c$ is finite and that $J$ satisfies $(\text{CPS})c)$.  Then there exists $u \in  D(J)$  such that $J(u) = c$ and $ 0 \in \pa^-J(u)$. 	Furthermore, if $\inf J(A) = c$, then there exists $u \in A \cap D(J)$ such that $J(u) = c$ and $0 \in \pa^-J (u)$. 
\end{Theorem}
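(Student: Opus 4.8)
\noi\textbf{Proof strategy.} This is the linking theorem of non-smooth critical point theory, and the plan is to transplant the classical pseudo-gradient argument to the splitting $J=J_0+J_1$ with the Fr\'echet subdifferential $\pa^-J=\na J_1+\pa J_0$. There are three ingredients: (i) locating the min--max level $c$; (ii) a quantitative Cerami-type deformation lemma for $J$; (iii) the usual contradiction argument. I begin with (i). For every $\psi\in\Sigma$ the linking hypothesis $A\cap\psi(B^N)\neq\emptyset$ forces $\sup_{B^N}J(\psi)\ge\inf J(A)$; taking the infimum over $\Sigma$ gives $c\ge\inf J(A)\ge\sup J(\varphi(S^{N-1}))$, and since $c<\infty$ by assumption, $c$ is a legitimate candidate critical level lying above the boundary values on $\varphi(S^{N-1})$. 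We may assume $\sup J(\varphi(S^{N-1}))<c$; the borderline case $\sup J(\varphi(S^{N-1}))=c$ forces $\inf J(A)=c$ and is absorbed into the last assertion (modulo a standard perturbation reduction to the strict case).

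The technical heart is a deformation lemma, which I would state as follows: \emph{if $c$ is finite, $(\mathrm{CPS})_c$ holds and $K_c:=\{u\in D(J):J(u)=c,\ 0\in\pa^-J(u)\}$ is either empty or $\mathcal N_0\subset\mathcal N_1$ is a pair of nested open neighbourhoods of the compact set $K_c$, then there are $\e>0$ and a continuous map $\eta:[0,1]\times D(J)\to D(J)$ with $\eta(0,\cdot)=\mathrm{id}$, $J(\eta(t,u))\le J(u)$ for all $(t,u)$, $\eta(t,u)=u$ whenever $J(u)\notin[c-2\e,c+2\e]$, and $\eta(1,\{J\le c+\e\})\subset\{J\le c-\e\}$ --- in the refined version, $\eta(1,\{J\le c+\e\}\setminus\mathcal N_0)\subset\{J\le c-\e\}$ together with $\eta([0,1]\times\mathcal N_0)\subset\mathcal N_1$.} To build $\eta$ I would use that, by $(\mathrm{CPS})_c$, on the strip $\{|J-c|\le 3\e\}$ away from $K_c$ the quantity $(1+\|\cdot\|)\vertiii{\pa^-J}$ is bounded below by a positive constant; one then selects there a locally Lipschitz ``descent field'' $V$ with $\|V(u)\|\le(1+\|u\|)$ and $\ld\na J_1(u),V(u)\rd+J_0(u)-J_0(u-V(u))$ bounded below by a positive constant, the existence of such $V$ being exactly where convexity and lower semicontinuity of $J_0$ enter: they give $D(J_0)$ convex and, since $J$ is decreased along the flow $\dot\sigma=-V(\sigma)$ while $J_1$ is finite everywhere, they force $\sigma(t)\in D(J_0)=D(J)$ for all $t$. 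The Cerami weight $(1+\|\cdot\|)$ is precisely what bounds the flow time uniformly and controls behaviour at infinity; compactness of $K_c$ is itself immediate from $(\mathrm{CPS})_c$ (a sequence in $K_c$ has $J\to c$ and $(1+\|\cdot\|)\vertiii{\pa^-J}\equiv0$). I expect this lemma to be the main obstacle: keeping the trajectories inside the merely lower-semicontinuous domain $D(J)$, and turning the Cerami condition (rather than ordinary Palais--Smale) into uniform quantitative descent, are the delicate points, as is the gluing into a genuine flow given that $V$ is only a set-valued/segment-wise descent direction.

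Granting the lemma, I would finish by contradiction. Assume $K_c=\emptyset$, take $\e$ as above small enough that $\sup J(\varphi(S^{N-1}))<c-2\e$, and pick $\psi\in\Sigma$ with $\sup_{B^N}J(\psi)<c+\e$, possible since $c=\inf_\Sigma\sup_{B^N}J$. Then $\tilde\psi:=\eta(1,\psi(\cdot))\in C(B^N,D(J))$; on $S^{N-1}$ one has $J(\varphi)<c-2\e$, so $\eta$ fixes $\varphi(S^{N-1})$ and $\tilde\psi\in\Sigma$; and $\eta(1,\{J\le c+\e\})\subset\{J\le c-\e\}$ yields $\sup_{B^N}J(\tilde\psi)\le c-\e<c$, contradicting the definition of $c$. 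Hence $K_c\neq\emptyset$: there is $u\in D(J)$ with $J(u)=c$ and $0\in\pa^-J(u)$.

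For the last assertion, suppose $\inf J(A)=c$ but $K_c\cap A=\emptyset$. Using compactness of $K_c$ and relative closedness of $A$, fix nested open sets $\mathcal N_0\subset\overline{\mathcal N_0}\subset\mathcal N_1$ with $K_c\subset\mathcal N_0$ and $\overline{\mathcal N_1}\cap A=\emptyset$, and apply the refined deformation. Choose $\psi\in\Sigma$ with $\sup_{B^N}J(\psi)<c+\e$; as before $\tilde\psi:=\eta(1,\psi(\cdot))\in\Sigma$, so by linking there is $y_0\in B^N$ with $\tilde\psi(y_0)\in A$, whence $J(\tilde\psi(y_0))\ge\inf J(A)=c>c-\e$. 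Since $\psi(y_0)\in\{J\le c+\e\}$ and $J(\eta(1,\psi(y_0)))>c-\e$, the refined descent property forces $\psi(y_0)\in\mathcal N_0$, and then $\eta([0,1]\times\mathcal N_0)\subset\mathcal N_1$ gives $\tilde\psi(y_0)\in\mathcal N_1$, contradicting $\tilde\psi(y_0)\in A$ and $\overline{\mathcal N_1}\cap A=\emptyset$. Therefore $K_c\cap A\neq\emptyset$, producing the desired $u\in A\cap D(J)$ with $J(u)=c$ and $0\in\pa^-J(u)$.
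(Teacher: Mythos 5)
First, a point of comparison: the paper does not prove Theorem \ref{thmsi4} at all --- it is quoted from Szulkin \cite{sz} (in the Cerami-type variant used in \cite{hirano}), so there is no in-paper proof to measure your argument against; what follows assesses your sketch on its own terms.

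Your reduction of the theorem to a quantitative deformation lemma, and the two contradiction arguments you run once that lemma is granted, form the standard and correct skeleton. The genuine gap is the deformation lemma itself, which you state but do not prove, and whose proposed construction is not viable as described. For $J=J_0+J_1$ with $J_0$ merely convex and lower semicontinuous, $\pa^-J(u)=\na J_1(u)+\pa J_0(u)$ is set-valued with no continuous selection in general, and the only descent information available at a non-critical point is of segment type: there exists $w$ with $\ld \na J_1(u),w-u\rd +J_0(w)-J_0(u)\leq -\e\|w-u\|$. This does not yield a locally Lipschitz field $V$ whose ODE flow decreases $J$: the inequality you impose on $V$ controls $J$ only along the straight segment from $u$ to $u-V(u)$, and convexity of $J_0$ gives no way to propagate such an estimate along curved integral trajectories of $\dot\sigma=-V(\sigma)$. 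The actual construction in \cite{sz} builds $\eta$ by gluing finitely many affine homotopies $u\mapsto u+t(w_i-u)$ through a partition of unity and an iteration, precisely because convexity of $J_0$ along those segments is what makes the $J$-estimates and the invariance of $D(J)$ work; none of this appears in your sketch, and you yourself flag it as the main obstacle, which in this setting amounts to leaving the theorem unproved. A second, smaller gap: your handling of the borderline case $\sup J(\varphi(S^{N-1}))=c$ is circular. You defer it to the final assertion, but the final-assertion argument again needs $\eta$ to fix $\varphi(S^{N-1})$ (to conclude $\tilde\psi\in\Sigma$), which your deformation guarantees only when $J<c-2\e$ on $\varphi(S^{N-1})$, i.e.\ in the strict case; the ``standard perturbation reduction'' you invoke is exactly the step requiring proof (in \cite{sz,hirano} it is handled by a deformation with an additional displacement control, or by deforming relative to a neighbourhood of $A$), so as written the equality case is not covered by either half of your argument.
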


\section{Very weak comparison principle}
Here we  establish a new weak comparison principle that can be applied in the setting of $H^s_{loc}(\Om)$ sub and supersolutions to $(P_\lambda)$ and cover all $q>0$ (whereas \cite[Lemma 2.2]{gts1} $q(2s-1)<2s+1$ is required). 
\begin{Lemma}\label{lemsi17}
	Let $F \in X_0^*$ and let $z,w \in H^s_{\text{loc}}(\Om)$ be such that $z,w  >0$ a.e in $\Om,~ z,w \geq 0 \in \R,~ z^{-q}, w^{-q} \in L^1_{\text{loc}} (\Om), ~ (z-\e)^+ \in X_0$ for all $\e>0,~ z \in L^1(\Om)$ and 
	\begin{align}\label{si71}
	\ld z, \phi \rd \leq \int_{\Om} z^{-q} \phi ~dx + (  F,\phi ) , \quad \ld w,\phi  \rd \geq \int_{\Om} w^{-q} \phi ~dx + ( F,\phi )
	\end{align}
	for all compactly supported  $ \phi \in X_0 \cap L^\infty(\Om) $ with $\phi\geq 0 $. Then $z\leq w$ a.e in $\Om$. 
\end{Lemma}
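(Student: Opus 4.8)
The plan is to prove $z \le w$ by testing the two inequalities in (\ref{si71}) against a carefully chosen nonnegative, bounded, compactly supported function built from the positive part $(z-w)^+$, and then showing that this forces $(z-w)^+ = 0$ a.e. The fundamental difficulty — and the reason the hypotheses are phrased the way they are — is that neither $z$ nor $w$ is assumed to lie in $X_0$; we only know $(z-\e)^+ \in X_0$, $z \in L^1(\Om)$, $w,z \in H^s_{\mathrm{loc}}(\Om)$, and the singular terms are only in $L^1_{\mathrm{loc}}$. So the test function must (i) be an admissible test function in the sense of (\ref{si71}), i.e.\ compactly supported in $\Om$, bounded, nonnegative, and in $X_0$; and (ii) still be "large enough" to detect the set $\{z > w\}$ after a limiting argument.

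\textbf{Construction of the test function.} For $\e > 0$ and a cutoff parameter, I would set something like
\[
\phi_\e = \frac{\big((z - \e) - (w-\e)\big)^+ \wedge M}{\;} \,\theta_\e ,
\]
more precisely: let $t_\e := (z-\e)^+ \wedge w$ (truncation from above by $w$) or work with $\psi_\e := \min\{(z-w)^+, \, (z-\e)^+\}$, truncated at height $M$ and multiplied by a smooth cutoff $\theta$ vanishing near $\partial\Om$, then let $M\to\infty$ and $\theta \uparrow 1$ and finally $\e \to 0$. The point of intersecting with $(z-\e)^+$ is to guarantee $\phi_\e \in X_0$ (since $(z-\e)^+ \in X_0$ and truncations/products with Lipschitz cutoffs of $X_0$ functions stay in $X_0$); the point of the $L^\infty$ truncation at $M$ is admissibility; and the point of the spatial cutoff is compact support. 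On the support of $\phi_\e$ we have $z > w > 0$ bounded below on compacts, so $z^{-q}, w^{-q}$ are genuinely integrable against $\phi_\e$ there.

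\textbf{Subtracting the inequalities.} Testing the first inequality of (\ref{si71}) with $\phi_\e$ and the second with the same $\phi_\e$ and subtracting, the $F$-terms cancel exactly, and one obtains
\[
\ld z - w, \phi_\e \rd \;\le\; \int_\Om \big(z^{-q} - w^{-q}\big)\phi_\e \, dx \;\le\; 0,
\]
the last inequality because $t \mapsto t^{-q}$ is decreasing, so on $\{z>w\}$ (which contains $\operatorname{supp}\phi_\e$) we have $z^{-q} - w^{-q} \le 0$. The main work is the lower bound on the quadratic form: one shows
\[
\ld z - w, \phi_\e \rd \;\ge\; c\,[\,(z-w)^+_{\text{truncated}}\,]^2_{H^s} \;-\; o(1)
\]
using the standard algebraic inequality that for the nonlocal Dirichlet form, $\big((z-w)(x) - (z-w)(y)\big)\big(\phi_\e(x) - \phi_\e(y)\big) \ge \big(\rho(x) - \rho(y)\big)^2$ where $\rho$ is the relevant truncation of $(z-w)^+$, plus controlling the cross terms coming from the spatial cutoff $\theta$ (these are where $H^s_{\mathrm{loc}}$, $z \in L^1(\Om)$, and the condition $z \ge 0$ on $\R$ are used: the "tail" interactions across $\partial\Om$ and the cutoff boundary are estimated exactly as in Lemma \ref{lemsi20}'s proof). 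Combining, $[\rho_\e]^2_{H^s} \le o(1)$, and passing to the limit (monotone/dominated convergence as $M\to\infty$, $\theta\uparrow1$, $\e\to0$, with Fatou on the Dirichlet form) gives $(z-w)^+ = 0$ a.e., i.e.\ $z \le w$ in $\Om$.

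\textbf{Where the real obstacle lies.} The delicate point is justifying each passage to the limit in the three parameters $M, \theta, \e$ while keeping the test function admissible at every finite stage — in particular showing that the localized Dirichlet form $\ld z-w, \phi_\e\rd$ converges (or is lower-semicontinuous in the right direction) to the global quantity, and that the cutoff cross terms vanish. This is precisely the "$H^s_{\mathrm{loc}}$ phenomenon" the introduction flags: one cannot simply test with $(z-w)^+$ globally because that need not be in $X_0$, so the argument must be run entirely through compactly supported truncations and a monotone exhaustion, checking that no mass is lost in the nonlocal interactions near $\partial\Om$. I expect the bookkeeping of these tail/cutoff terms — splitting $Q = \R^{2N}\setminus(\Om^c \times \Om^c)$ into the near-diagonal part, the $\Om \times (\R\setminus\Om)$ part, and the cutoff-transition part, and bounding each using $z \in L^1(\Om)$ and $z \ge 0$ on $\R^N$ — to be the technically heaviest part of the proof.
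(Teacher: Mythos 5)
Your proposal correctly identifies the central obstruction --- $(z-w)^+$ need not belong to $X_0$, so one cannot test with it directly --- but it does not overcome it: the step where you claim $\ld z-w,\phi_\e\rd \ge c\,[\rho]_{H^s}^2-o(1)$ is a genuine gap, not bookkeeping. The algebraic inequality $\bigl((z-w)(x)-(z-w)(y)\bigr)\bigl(\phi(x)-\phi(y)\bigr)\ge\bigl(\rho(x)-\rho(y)\bigr)^2$ holds only when $\phi$ is a nondecreasing $1$-Lipschitz truncation of $z-w$ itself; your $\phi_\e$ is additionally capped by $(z-\e)^+$ (a function of $z$ alone, not of $z-w$) and multiplied by a cutoff $\theta$, and both operations destroy that structure. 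The resulting cross terms, of the form $\iint \tfrac{\rho(x)+\rho(y)}{2}\,\frac{((z-w)(x)-(z-w)(y))(\theta(x)-\theta(y))}{|x-y|^{N+2s}}\,dxdy$, live where the cutoff transitions, i.e.\ arbitrarily close to $\partial\Om$ as $\theta\uparrow 1$; there $w$ is controlled only in $H^s_{\text{loc}}(\Om)$ (no global integrability of $w$ is assumed in the statement), so there is no mechanism forcing these terms to vanish. Deferring exactly this estimate to ``the technically heaviest part'' leaves the proof incomplete at its decisive point.

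The paper closes this gap by a device absent from your proposal: it regularizes the singular term (replacing $-t^{-q}$ by $\max\{-t^{-q},-n\}$ with primitive $\Psi_n$, $n\ge\e^{-q}$) and minimizes the associated convex functional $\breve H_{F,n}$ over the obstacle set $K=\{\varphi\in X_0:\,0\le\varphi\le w\}$. The minimizer $v$ is a genuine $X_0$ function squeezed below $w$; testing its variational inequality with $\varphi_t=\min\{v+t\varphi,w\}$ and exploiting the supersolution property of $w$ shows that $v$ is itself a supersolution of the regularized problem against every $0\le\varphi\in X_0$. One may then legitimately compare $z$ with $v$ rather than with $w$: the function $(z-v-\e)^+=((z-\e)^+-v)^+$ lies in $X_0$ because $v\ge0$ and $(z-\e)^+\in X_0$, and a monotone approximation by compactly supported smooth functions makes it admissible in the subsolution inequality for $z$. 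Subtracting the two inequalities and using the monotonicity of $\Psi_n^\prime$ gives $\|(z-v-\e)^+\|^2\le0$, hence $z\le v+\e\le w+\e$. To repair your argument you essentially need this intermediate object $v$ (or an equivalent penalization); the truncation-and-cutoff scheme alone does not close.
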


\begin{proof}
	Let us denote that $\Psi_n : \mathbb{R} \ra \mathbb{R}$ the primitive of the function 
	\begin{align*}
	s \mapsto 
	\left\{
	\begin{array}{ll}
	\max \{  -s^{-q}, -n\}	&  \text{ if }  s>0 , \\
	-n & \text{ if  }  s \leq 0
	\end{array} 
	\right.
	\end{align*}
	such that $\Psi_n (1)=0$. Let us define a proper lower semicontinuous, strictly convex functional $\breve{H}_{0,n} : L^2(\Om) \ra  \mathbb{R}$ given by 
	\begin{align*}
	\breve{H}_{0,n}(u)= 
	\left\{
	\begin{array}{ll}
	\frac12 \|u\|^2 + \int_{\Om}\Psi_n(u)~dx  &  \text{ if }  u \in X_0 , \\
	\infty  & \text{ if } u \in L^2(\Om) \setminus X_0.   
	\end{array} 
	\right.
	\end{align*}
	We define $H_{0,n}: L^2(\Om) \ra  \mathbb{R}$ as 
	\begin{align*}
	H_{0,n}(u) = 	\breve{H}_{0,n}(u) - \min	\breve{H}_{0,n}= 	\breve{H}_{0,n}(u) - 	\breve{H}_{0,n}(u_{0,n})
	\end{align*}
	where $u_{0,n} \in X_0$ is the minimum of $	\breve{H}_{0,n}$. More generally,  for $F \in  X_0^*$ we set:
	\begin{align*}
	\breve{H}_{F,n}(u)= 
	\left\{
	\begin{array}{ll}
	\breve{H}_{0,n}(u) -( F, u- u_{0,n}) &  \text{ if }  u \in X_0 , \\
	\infty  & \text{ if } u \in L^2(\Om) \setminus X_0.   
	\end{array} 
	\right.
	\end{align*}
	Let $\e>0$ and $n > \e^{-q}$ and let $v$ be the minimum of the functional $\breve{H}_{F,n}$ on the convex set $K= \{ \varphi \in X_0: 0\leq \varphi \leq w \text{ a.e  in } \Om  \}$. Then for all $\varphi \in K$ we get 
	\begin{align}\label{si91}
	\ld v, \varphi- v \rd \geq - \int_{\Om}\Psi_n^\prime(v) (\varphi - v )~dx + ( F, \varphi -v ). 
	\end{align}
	Let $0\leq \varphi \in C_c^\infty(\Om), \; t>0$. Define   $\varphi_t := \min \{ v+ t \varphi , w  \} $. Now using the fact that $ w \in H^s_{\text{loc}} (\Om), \; v \in X_0,\;  \varphi 
		\in C^\infty_c(\Om)$, we have $\varphi_t \in  X_0$. Furthermore,  $\varphi_t$ is uniformly bounded in $X_0$ for all $t<1$.
		For the proof let  $A= \text{supp}(\varphi)$. Since  on $\Om \setminus A,~ \varphi_t = v$ and otherwise $v\leq \varphi_t \leq w$,  we deduce that 
		
		\begin{equation}
		\begin{aligned}\label{si89}
		\int_{Q} \frac{(\varphi_t(x)-\varphi_t(y))^2}{|x-y|^{N+2s}}~dxdy&  \leq  [\varphi_t]_{H^s(A)}  +\int_{ \Om \setminus A}\int_{\Om \setminus A}\frac{(v(x)- v(y))^2}{|x-y|^{N+2s}}~dxdy \\
		&~+ 2 \int_{ \Om \setminus A}\int_{A}\frac{ (v(x)- v(y))^2+ 2t  v(y)\varphi(y)+t^2\varphi^2(y)}{|x-y|^{N+2s}}~dxdy
		\\
		&\qquad 	+ 2 \int_{ \R \setminus\Om}\int_{\Om}\frac{(v+t\varphi(y))^2}{|x-y|^{N+2s}}~dxdy <\infty. 
		\end{aligned}
		\end{equation} 
		Employing the fact that for any $g:\R\ra \mathbb{R}$,  $|g^+(x)-g^+(y)|^2\leq |g(x)-g(y)|^2$ for all $x,y \in \R$ coupled with $\varphi_t= v+t\varphi - (v+t\varphi-w)^+$, for all $t<1$, we conclude that 
		\begin{equation}
		\begin{aligned}\label{si90}
		[\varphi_t]_{H^s(A)} & \leq  
		2 \int_{  A}\int_{A}\frac{((v+t\varphi)(x)- (v+t\varphi)(y))^2}{|x-y|^{N+2s}}~dxdy\\
		&  \qquad  + 2
		\int_{  A}\int_{A}\frac{((v+t\varphi-w)(x)- (v+t\varphi-w))^2}{|x-y|^{N+2s}}~dxdy\\
		& \quad \leq   C\left([v]_{H^s(A)}+[\varphi]_{H^s(A)} + [w]_{H^s(A)} \right) . 
		\end{aligned}
		\end{equation}
		From \eqref{si89} and \eqref{si90} we obtain that $\varphi_t$ is uniformly bounded in $X_0$. Take the subsequence (still denoted by $\varphi_t$) such that $\varphi_t\rp v$ weakly in $X_0$  as $t\to 0^+$. Now test \eqref{si91} with $\varphi_t$, we get 
		\begin{align}\label{si72}
		\ld v, \varphi_t- v \rd \geq - \int_{\Om}\Psi_n^\prime(v) (\varphi_t - v )~dx + ( F, \varphi_t -v ). 
		\end{align}
	Using \eqref{si71} and the fact that $w^{-q}\geq -\Psi_n^\prime(w)$,  we infer that $w$ satisfies 
	\begin{align}\label{si73}
	\ld w,\phi  \rd \geq - \int_{\Om}\Psi_n^\prime(w) \phi ~dx+ ( F,\phi )
	\end{align}
	Deploying the fact that  $ \varphi_t \leq w$ coupled with  $\varphi_t- v- t\varphi \leq 0 $ and if  $\varphi_t = w$ then 
	$\varphi_t- v- t\varphi \not = 0$,  we deduce that 
	\begin{equation}\label{si74}
	\begin{aligned}
	& \int_{Q} \frac{(\varphi_t(x)- \varphi_t(y))((\varphi_t-v-t\varphi ) (x)- (\varphi_t-v-t\varphi ) (y))}{|x-y|^{N+2s}}~dxdy\\
	&\leq  \int_{Q} \frac{w(x)(\varphi_t-v-t\varphi ) (x)}{|x-y|^{N+2s}}~dxdy + \int_{Q} \frac{w(y)(\varphi_t-v-t\varphi ) (y)}{|x-y|^{N+2s}}~dxdy \\
	& \quad - \int_{Q} \frac{w(x)(\varphi_t-v-t\varphi ) (y)}{|x-y|^{N+2s}}~dxdy  - \int_{Q} \frac{w(y)(\varphi_t-v-t\varphi ) (x)}{|x-y|^{N+2s}}~dxdy 
	=  \ld w, \varphi_t-v-t\varphi \rd.  
	\end{aligned}
	\end{equation}
	Similarly, $\int_{\Om}(\Psi_n^\prime(\varphi_t)- \Psi_n^\prime(w)) (\varphi_t-v-t\varphi )~dx \leq 0$ and moreover  $\Psi_n^\prime(w) \leq - w^{-q}$. Taking into account \eqref{si71},  \eqref{si72}, \eqref{si73},   \eqref{si74} and above observations,  we deduce that 
	\begin{align*}
	\|\varphi_t-v\|^2 &  - \int_{\Om}(-\Psi_n^\prime(\varphi_t)+ \Psi_n^\prime(v)) (\varphi_t-v )~dx\\
	&  = \ld \varphi_t, \varphi_t-v \rd +  \int_{\Om}\Psi_n^\prime(\varphi_t)(\varphi_t-v )~dx - \ld v , \varphi_t-v \rd  -  \int_{\Om} \Psi_n^\prime(v) (\varphi_t-v )~dx\\
	&  \leq \ld \varphi_t, \varphi_t-v \rd +  \int_{\Om}\Psi_n^\prime(\varphi_t)(\varphi_t-v )~dx - ( F, \varphi_t -v )\\
	& =   \ld \varphi_t, \varphi_t-v -t\varphi \rd +  \int_{\Om}\Psi_n^\prime(\varphi_t)(\varphi_t-v -t\varphi)~dx - ( F, \varphi_t -v-t\varphi )\\
	& \hspace{4cm} + t\left(  \ld \varphi_t, \varphi \rd +  \int_{\Om}\Psi_n^\prime(\varphi_t)\varphi~dx - ( F, \varphi )\right)\\
	& \leq \ld w, \varphi_t-v -t\varphi \rd +  \int_{\Om}\Psi_n^\prime(w)(\varphi_t-v -t\varphi)~dx - ( F, \varphi_t -v-t\varphi )\\
	& \hspace{4cm}  + t\left(  \ld \varphi_t, \varphi \rd +  \int_{\Om}\Psi_n^\prime(\varphi_t)\varphi~dx - ( F, \varphi )\right)\\
	& \leq t\left(  \ld \varphi_t, \varphi \rd +  \int_{\Om}\Psi_n^\prime(\varphi_t)\varphi~dx - ( F, \varphi )\right). 
	\end{align*}
	Therefore, we obtain that 
	\begin{align*}
	\ld \varphi_t, \varphi \rd  +  \int_{\Om}\Psi_n^\prime(\varphi_t)\varphi~dx- ( F, \varphi ) &\geq \frac1t\left(\|\varphi_t-v\|^2 -   \int_{\Om}|\Psi_n^\prime(\varphi_t)- \Psi_n^\prime(v)| (\varphi_t-v )~dx \right)\\
	&\geq -\int_{\Om}|\Psi_n^\prime(\varphi_t)- \Psi_n^\prime(v)| \varphi~dx . 
	\end{align*}
		Now  using the weak convergence of $\varphi_t$ and 
		monotone convergence theorem, and dominated convergence theorem, we obtain 
		\begin{align}\label{si75}
		\ld v , \varphi \rd  \geq -  \int_{\Om}\Psi_n^\prime(v)\varphi~dx+ ( F, \varphi ) . 
		\end{align} 
		Using the density argument, one can easily show that \eqref{si75} is true for all $ \varphi \in X_0$ with $\varphi \geq 0$ a.e in $\Om$.  Note that  $v\geq 0$ implies  $\text{supp}(z-\epsilon-v)^+ \subset \text{supp}(z-\epsilon)^+)$ that is, $(z-v-\e)^+ \in X_0$.  So from \eqref{si75}, it implies  that 
	\begin{align}\label{si79}
	\ld v , (z-v-\e)^+ \rd  \geq -  \int_{\Om}\Psi_n^\prime(v)(z-v-\e)^+~dx+ ( F,(z-v-\e)^+ ) . 
	\end{align}
		Let $(z-v-\e)^+:= \mathfrak{g} \in X_0$ such that $0\leq \mathfrak{g}\leq z$ a.e in $\Om$. Let $\{\hat{\mathfrak{g}}_m \}$ be a monotonically increasing sequence in $C_c^\infty(\Om)$  such that $\{\hat{\mathfrak{g}}_m\}$  converging to $\mathfrak{g}$ in $X_0$ and set $\mathfrak{g}_m = \min\{ \hat{\mathfrak{g}}_m^+, \mathfrak{g} \}$.  Testing \eqref{si71} with $\mathfrak{g}_m$, we get 
		\begin{align}\label{si77}
		\ld z,\mathfrak{g}_m\rd \leq \int_{ \Om}z^{-q} \mathfrak{g}_m ~dx +( F,\mathfrak{g}_m)
		\end{align}
		Observe that if $\mathfrak{g} >0$ then $z >\e$. 
		Now consider 
		\begin{equation}\label{si93}
		\begin{aligned}
		\int_{\{\mathfrak{g} >0\}} & \int_{\{\mathfrak{g} >0\}} \frac{(z(x)-z(y))((\mathfrak{g}_m-\mathfrak{g})(x)-(\mathfrak{g}_m-\mathfrak{g})(y))}{|x-y|^{N+2s}}~dxdy\\
		& = \int_{\{\mathfrak{g} >0\}} \int_{\{\mathfrak{g} >0\}} \frac{((z-\e)^+(x)-(z-\e)^+(y))((\mathfrak{g}_m-\mathfrak{g})(x)-(\mathfrak{g}_m-\mathfrak{g})(y))}{|x-y|^{N+2s}}~dxdy\\
		& \leq \|(z-\e)^+\|~ \|(\mathfrak{g}_m-\mathfrak{g})\|\ra 0 \text{ as } m \ra \infty. 
		\end{aligned}
		\end{equation}
		\begin{equation}\label{si94}
		\begin{aligned}
		\int_{\R\setminus\{\mathfrak{g}>0\}}& \int_{\{\mathfrak{g}>0\}}\frac{(z(x)-z(y))(\mathfrak{g}_m(x)-\mathfrak{g}_m(y))}{|x-y|^{N+2s}}dxdy\\
		& =\int_{\R\setminus{\{\mathfrak{g}>0\}}}\int_{\{\mathfrak{g}>0\}}\frac{((z(x))(\mathfrak{g}_m(x))}{|x-y|^{N+2s}}dxdy -\int_{ \R\setminus\{\mathfrak{g}>0\}}\int_{\{\mathfrak{g}>0\}}\frac{(z(y))(\mathfrak{g}_m(x))}{|x-y|^{N+2s}}dxdy\\
		&  \geq\int_{ \R\setminus\{\mathfrak{g}>0\}}\int_{\{\mathfrak{g}>0\}}\frac{((z(x))(\mathfrak{g}_m(x))}{|x-y|^{N+2s}}dxdy -\int_{ \R\setminus\{\mathfrak{g}>0\}}\int_{\{\mathfrak{g}>0\}}\frac{(z(y))(\mathfrak{g}(x))}{|x-y|^{N+2s}}dxdy.
		\end{aligned}
		\end{equation}
		Taking into account the fact that $z^{-q} \mathfrak{g}_m \leq z^{-q} \mathfrak{g} $,   \eqref{si77}, \eqref{si93}, \eqref{si94} and monotone convergence theorem,    if $z^{-q} \mathfrak{g} \in L^1(\Om)$ or $z^{-q} \mathfrak{g}  \not \in L^1(\Om)$, we conclude that 
		\begin{align*}
		\ld z,\mathfrak{g}\rd \leq \int_{ \Om}z^{-q} \mathfrak{g} ~dx +( F,\mathfrak{g}).
		\end{align*}
	That is, 
	\begin{align}\label{si80}
	\ld z,(z-v-\e)^+\rd \leq \int_{ \Om}z^{-q} (z-v-\e)^+ ~dx +( F,(z-v-\e)^+)
	\end{align}
	Exploiting $n \geq \e^{-q}$, \eqref{si79},  \eqref{si80},   and the  fact that for any measurable  function $\mathfrak{h}$, $\ld 	\mathfrak{h}^+,\mathfrak{h}^+ \rd \leq \ld \mathfrak{h},\mathfrak{h}^+\rd$,  we obtain  that 
	\begin{align*}
	\ld (z-v-\e)^+,(z-v-\e)^+\rd &\leq \ld z-v,(z-v-\e)^+\rd\\
	& \leq  \int_{ \Om}(z^{-q}+ \Psi_n^\prime(v)) (z-v-\e)^+ ~dx \\
	& = \int_{ \Om}(-\Psi_n^\prime(z)+ \Psi_n^\prime(v)) (z-v-\e)^+ ~dx\leq 0. 
	\end{align*}
	Thus, $z\leq v+\e\leq w+\e$. Since $\e$ was arbitrary chosen, hence proof follows. \QED
\end{proof}
\section{Regularity and Proof of Theorem \ref{thmsi5}}
In this section, we start by extending some regularity results contained in \cite{newpaper} and conclude the proof of Theorem  \ref{thmsi5}. 
\begin{Lemma}\label{lemsi15}
	Any nonnegative solution to $(\widetilde{P_{\la}})$ belongs to $L^\infty(\Om)$. 
\end{Lemma}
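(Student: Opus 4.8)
The plan is to show first that $u\in L^p(\Om)$ for every $p<\infty$ by a Brezis--Kato / Moser iteration adapted to the critical Choquard term, and then to bootstrap to $L^\infty(\Om)$ via fractional elliptic regularity; throughout I use the standing hypothesis $\mu\le\min\{4s,N\}$, which plays a structural role. As a first step I would reduce to a subsolution inequality. Let $u\in X_0$, $u\ge 0$, solve $(\widetilde{P_\la})$. Since $u\ge 0$ forces $u+\overline u\ge\overline u>0$, we have $g(x,u)=\overline u^{-q}-(u+\overline u)^{-q}\ge 0$ a.e.\ in $\Om$, so Lemma \ref{lemsi18} applied with an arbitrary $0\le v\in X_0$ gives, after discarding the nonnegative singular term,
\[
\ld u,v\rd\;\le\;\la\int_\Om K(x)\,(u+\overline u)^{2^*_\mu-1}(x)\,v(x)\,dx,\qquad K(x):=\int_\Om\frac{(u+\overline u)^{2^*_\mu}(y)}{|x-y|^{\mu}}\,dy .
\]
By Proposition \ref{propsi1}, $\overline u\in L^\infty(\Om)$, hence $u+\overline u\in L^{2^*_s}(\Om)$, $(u+\overline u)^{2^*_\mu}\in L^{\frac{2N}{2N-\mu}}(\Om)$, and the Hardy--Littlewood--Sobolev inequality gives $K\in L^{\frac{2N}{\mu}}(\Om)$; since $\mu\le 4s$ this also yields $K\in L^{N/2s}(\Om)$. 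Moreover $\mu\le4s$ forces $2^*_\mu\ge2$, and $(u+\overline u)^{2^*_\mu-1}\le C_1u^{2^*_\mu-1}+C_2$ with $C_2=C_1\|\overline u\|_{L^\infty(\Om)}^{2^*_\mu-1}$.

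\emph{The iteration.} Fix $\beta\ge 0$, $M\ge 1$, set $u_M=\min\{u,M\}$ and note that $t\mapsto t\min\{t,M\}^{2\beta}$ is Lipschitz and vanishes at $0$, so $\phi:=u\,u_M^{2\beta}\in X_0$ is an admissible nonnegative test function. The Stroock--Varopoulos (chain-rule) inequality for the form $\ld\cdot,\cdot\rd$ gives $\ld u,u\,u_M^{2\beta}\rd\ge\frac{2\beta+1}{(\beta+1)^2}\|u\,u_M^{\beta}\|^2$; combining this with $\|w\|^2\ge S\,|w|_{2^*_s}^2$, with $(u+\overline u)^{2^*_\mu-1}u\le C_1u^{2^*_\mu}+C_2u$, and splitting $\Om=\{u<1\}\cup\{u\ge1\}$ (using $2^*_\mu\ge2$ on the second set), one obtains with $C$ independent of $\beta$ and $M$
\[
\frac{S(2\beta+1)}{(\beta+1)^2}\,|u\,u_M^{\beta}|_{2^*_s}^2\;\le\;C\int_\Om P\,(u\,u_M^{\beta})^2\,dx+C,\qquad P:=K\,u^{2^*_\mu-2}.
\]
Since $u\in L^{2^*_s}(\Om)$, $K\in L^{2N/\mu}(\Om)$ and $\mu\le4s$, Hölder gives $P\in L^{N/2s}(\Om)$. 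Decompose $P=P_1+P_2$ with $P_1=P$ on $\{P\le L\}$ and $P_1=0$ elsewhere, so $P_1\in L^\infty$ and $|P_2|_{N/2s}=\eta(L)\to0$ as $L\to\infty$; then $\int_\Om P\,(u\,u_M^{\beta})^2\le L\,|u\,u_M^{\beta}|_2^2+\eta(L)\,|u\,u_M^{\beta}|_{2^*_s}^2$. Choosing $L=L(\beta)$ with $C\eta(L)\le\frac{S(2\beta+1)}{2(\beta+1)^2}$ and absorbing the critical term yields $|u\,u_M^{\beta}|_{2^*_s}^2\le C(\beta)(|u\,u_M^{\beta}|_2^2+1)$. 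Letting $M\to\infty$ (monotone convergence) shows: if $u\in L^{2(\beta+1)}(\Om)$ then $u\in L^{2^*_s(\beta+1)}(\Om)$. Starting from $u\in L^{2^*_s}(\Om)$, i.e.\ $\beta+1=\frac{N}{N-2s}>1$, and iterating $r_{k+1}=\frac{N}{N-2s}r_k\to\infty$, we get $u\in L^p(\Om)$ for all $p<\infty$.

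\emph{Conclusion.} With $u\in\bigcap_{p<\infty}L^p(\Om)$ and $\overline u\in L^\infty(\Om)$, Hölder and $K\in L^{2N/\mu}(\Om)$ show that $h:=\la K(u+\overline u)^{2^*_\mu-1}\in L^p(\Om)$ for every $p<\infty$, in particular for some $p>N/2s$. Since $u$ is a subsolution of $(-\De)^su=h$ in $\Om$ with $u=0$ in $\R\setminus\Om$, testing $\ld u-w,v\rd\le 0$ with $v=(u-w)^+$, where $w\in X_0$ solves $(-\De)^sw=h$, $w=0$ in $\R\setminus\Om$, gives $u\le w$ a.e., and $w\in L^\infty(\Om)$ by the standard $L^\infty$-bound for the fractional Laplacian with right-hand side in $L^p$, $p>N/2s$ (see e.g.\ \cite{RS,servadei}). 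Hence $u\in L^\infty(\Om)$.

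\emph{Main difficulty.} The delicate point is that $h$ is a priori only known to lie in $L^{(2^*_s)'}(\Om)$, so no naive bootstrap works and the iteration must be closed by the absorption argument above; this is exactly where the hypothesis $\mu\le\min\{4s,N\}$ enters, since it guarantees both $2^*_\mu\ge2$ (so the weight $u^{2^*_\mu-2}$ is non-singular) and $P=Ku^{2^*_\mu-2}\in L^{N/2s}(\Om)$, the borderline integrability that makes the absorption possible. The only remaining technical points—verifying that the truncated powers $u\,u_M^{2\beta}$ genuinely lie in $X_0$ and that the chain-rule inequality holds for the nonlocal form on $Q$—are routine.
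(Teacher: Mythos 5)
Your proof is correct and its core mechanism --- testing with $u\,u_M^{2\beta}$, the chain-rule/Stroock--Varopoulos lower bound (which is exactly the paper's inequality \eqref{si85} with $r=2\beta+2$), and absorption of the critical term by splitting a weight into a bounded part and a part that is small in the borderline Lebesgue norm --- is the same as in the paper's proof. Where you genuinely diverge is in how the absorption is organized and in the endgame. The paper splits the \emph{solution} at a level $R$ (the sets $\{u\le R\}$ and $\{u>R\}$) and then runs the Moser iteration all the way to $L^\infty$, tracking the constants $C_{r_j}$ along the recursion $(r_{j+1}+2^*_\mu-2)\tfrac{2^*_s}{2^*_\mu}=\tfrac{2^*_s r_j}{2}$ and taking the infinite product to conclude $|u|_\infty\le C_0A_1$ directly. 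You instead isolate the weight $P=Ku^{2^*_\mu-2}\in L^{N/2s}(\Om)$, split \emph{it} at a level $L(\beta)$ (a Brezis--Kato argument), stop the iteration at $u\in\bigcap_{p<\infty}L^p(\Om)$, and then hand off to linear theory: $h\in L^p$ for some $p>N/2s$ plus the comparison with the solution $w$ of $(-\De)^sw=h$. Your route avoids the bookkeeping of the product of iteration constants at the cost of invoking the linear $L^p\to L^\infty$ estimate and a comparison principle; both are legitimate. One small imprecision in your conclusion step: with only $K\in L^{2N/\mu}(\Om)$ fixed, H\"older gives $h\in L^q$ only for $q<2N/\mu$, which equals $N/2s$ at the borderline $\mu=4s$, so the claim ``$h\in L^p$ for every $p<\infty$'' does not follow as written. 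This is harmless: once $u\in\bigcap_{p<\infty}L^p(\Om)$ you should first upgrade $K$ itself, e.g. $K(x)=\int_\Om|x-y|^{-\mu}(u+\overline u)^{2^*_\mu}(y)\,dy\in L^\infty(\Om)$ by H\"older since $(u+\overline u)^{2^*_\mu}\in L^p(\Om)$ for $p$ large enough that $\mu p'<N$; then $h\in L^p(\Om)$ for all $p<\infty$ and the bootstrap closes.
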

\begin{proof}
		Let $u \in X_0$ be any  non negative weak solution to \eqref{si15}. Let  $u_\tau=   \min\{  u,\tau \}  $ for  $ \tau>0$.   Let   $ \phi = u(u_\tau)^{r-2}  \in X_0$ ($ r\geq 2$) be a test function to problem $(\widetilde{P_{\la}})$. Now from \cite[Lemma 3.5]{newpaper}, we have  the   following inequality
	\begin{align}\label{si85}
	\frac{4(r-1)}{r^2} \left(  a|a_k|^{\frac{r}{2}-1} -b|b_k|^{\frac{r}{2}-1} \right)^2 \leq 	(a-b)(a_k|a_k|^{r-2}-b_k|b_k|^{r-2}). 
	\end{align}
	where $a,b \in \mathbb{R}$ and $r \geq 2$.  Using \eqref{si85}, we deduce  that 
	\begin{equation}\label{si86}
	\begin{aligned}
	|u(u_\tau)^{\frac{r}{2}-1}|^2_{2^*_s}& \leq C \|u(u_\tau)^{\frac{r}{2}-1}\|^2  \leq \frac{Cr^2}{r-1} \int_{Q} \frac{(u(x)-u(y))(\phi(x)-\phi(y))}{|x-y|^{N+2s}}~dxdy\\
	&=  Cr \left(-g(x,u)u (u_\tau)^{r-2}~dx + \int_{ \Om}\int_{ \Om} \frac{(u+\overline{u})^{2^*_\mu}(u+\overline{u})^{2^*_\mu-1} u (u_\tau)^{r-2}}{|x-y|^{\mu}}~dxdy \right)\\
	& \leq Cr \int_{ \Om}\int_{ \Om} \frac{(u+\overline{u})^{2^*_\mu}(u+\overline{u})^{2^*_\mu-1} u (u_\tau)^{r-2}}{|x-y|^{\mu}}~dxdy \\
	&  \leq Cr  \left(\int_{ \Om}\int_{ \Om} \frac{u^{2^*_\mu}u^{2^*_\mu}  (u_\tau)^{r-2}}{|x-y|^{\mu}}~dxdy  + \int_{ \Om}\int_{ \Om} \frac{u^{2^*_\mu}\overline{u}^{2^*_\mu-1} u (u_\tau)^{r-2}}{|x-y|^{\mu}}~dxdy\right.\\
	& \left. \qquad +\int_{ \Om}\int_{ \Om} \frac{\overline{u}^{2^*_\mu}u^{2^*_\mu}  (u_\tau)^{r-2}}{|x-y|^{\mu}}~dxdy + \int_{ \Om}\int_{ \Om} \frac{\overline{u}^{2^*_\mu}\overline{u}^{2^*_\mu-1} u (u_\tau)^{r-2}}{|x-y|^{\mu}}~dxdy\right)\\
	& \leq Cr \left(|u|_{2^*_s}^{2^*_\mu} \left(\int_{ \Om} (u^{2^*_\mu} u_\tau^{r-2})^{\frac{2^*_s}{2^*_\mu}} \right)^{\frac{2^*_\mu}{2^*_s}}+ |u|_{2^*_s}^{2^*_\mu} |\overline{u}|_\infty^{2^*_\mu-1}\left(\int_{ \Om} (u u_\tau^{r-2})^{\frac{2^*_s}{2^*_\mu}} \right)^{\frac{2^*_\mu}{2^*_s}} \right. \\
	& \left.  \quad + |\overline{u}|_{2^*_s}^{2^*_\mu} \left(\int_{ \Om} (u^{2^*_\mu} u_\tau^{r-2})^{\frac{2^*_s}{2^*_\mu}} \right)^{\frac{2^*_\mu}{2^*_s}} + |\overline{u}|_{2^*_s}^{2^*_\mu} |\overline{u}|_\infty^{2^*_\mu-1} \left(\int_{ \Om} ( uu_\tau^{r-2})^{\frac{2^*_s}{2^*_\mu}} \right)^{\frac{2^*_\mu}{2^*_s}}	\right)
	\end{aligned}
	\end{equation} 
	\textbf{Claim:} Let $r_1= 2^*_s+1$. Then $ u \in L^{\frac{2^*_s r_1}{2}}(\Om)$.\\
	In view of H\"older's inequality, we have 
\begin{equation}\label{si87}
\begin{aligned}
\left(\int_{ \Om} (u^{2^*_\mu} u_\tau^{r-2})^{\frac{2^*_s}{2^*_\mu}} \right)^{\frac{2^*_\mu}{2^*_s}} &  = \left(\int_{ u\leq R} (u^{2^*_\mu} u_\tau^{r_1-2})^{\frac{2^*_s}{2^*_\mu}} \right)^{\frac{2^*_\mu}{2^*_s}}  +\left(\int_{ u>R} (|u|^{2^*_\mu} u_\tau^{r_1-2})^{\frac{2^*_s}{2^*_\mu}} \right)^{\frac{2^*_\mu}{2^*_s}} \\
	&  \leq R^{2^*_\mu} \left(\int_{ u\leq R} ( u_\tau^{r_1-2})^{\frac{2^*_s}{2^*_\mu}} \right)^{\frac{2^*_\mu}{2^*_s}}  +\left(\int_{ u>R} (u u_\tau^{r_1-2})^{\frac{2^*_s}{2}} \right)^{\frac{2}{2^*_s}} \\
	& \hspace{6cm}\left(\int_{ u>R} u^{2^*_s}\right)^{\frac{2^*_\mu-2}{2^*_s}}. 
	\end{aligned}
	\end{equation}	
		Choose $R>0$ large enough such that 
	\begin{align}\label{si88}
	\left( \int_{|u|>R} |u|^{2^*_s}  ~dx\right)^{\frac{2^*_s-2}{2^*_s}}  < \frac{1}{4Cr_1} \min\left\{ \frac{1}{|u|_{2^*_s}^{2^*_\mu}}, \frac{1}{|\overline{u}|_{2^*_s}^{2^*_\mu}}\right\}. 
	\end{align}
	Taking into account \eqref{si86}, \eqref{si87} jointly with \eqref{si88}, we obtain
	\begin{align*}
	|u(u_\tau)^{\frac{r_1}{2}-1}|^2_{2^*_s}& 
	\leq Cr \left(R^{2^*_\mu}|u|_{2^*_s}^{2^*_\mu} \left(\int_{ u\leq R} ( u^{2^*_s-1})^{\frac{2^*_s}{2^*_\mu}} \right)^{\frac{2^*_\mu}{2^*_s}}+ |u|_{2^*_s}^{2^*_\mu} |\overline{u}|_\infty^{2^*_\mu-1}\left(\int_{ \Om} u^{2^*_s} \right)^{\frac{2^*_\mu}{2^*_s}} \right. \\
	& \left.  \qquad + |\overline{u}|_{2^*_s}^{2^*_\mu}  R^{2^*_\mu} \left(\int_{ u\leq R} ( u^{2^*_s-1})^{\frac{2^*_s}{2^*_\mu}} \right)^{\frac{2^*_\mu}{2^*_s}} + |\overline{u}|_{2^*_s}^{2^*_\mu} |\overline{u}|_\infty^{2^*_\mu-1} \left(\int_{ \Om} u^{2^*_s}   \right)^{\frac{2^*_\mu}{2^*_s}}	\right).
	\end{align*}
	Appealing  Fatou's Lemma as $\tau \ra \infty$,  we obtain 
	\begin{align*}\label{ch41}
	||u|^{\frac{r_1}{2}}|^2_{2^*_s}& \leq Cr_1\left(|u|_{2^*_s}^{2^*_\mu} + |\overline{u}|_{2^*_s}^{2^*_\mu}\right) \left(R^{2^*_\mu} \left(\int_{ u\leq R} ( u^{2^*_s-1})^{\frac{2^*_s}{2^*_\mu}} \right)^{\frac{2^*_\mu}{2^*_s}}+  |\overline{u}|_\infty^{2^*_\mu-1}\left(\int_{ \Om} u^{2^*_s} \right)^{\frac{2^*_\mu}{2^*_s}} 	\right)	<\infty.
	\end{align*}
	This establishes the Claim. Now let $\tau \ra \infty$ in  \eqref{si86} and using the inequality $x^p<1+x$ for $p<1$ and $x \geq 0$  we obtain 
	\begin{equation*}
	\begin{aligned}
	||u|^{\frac{r}{2}}|^2_{2^*_s}& \leq Cr \left(|u|_{2^*_s}^{2^*_\mu} + |\overline{u}|_{2^*_s}^{2^*_\mu}\right) \left(\left(\int_{ \Om} (u^{2^*_\mu+r-2})^{\frac{2^*_s}{2^*_\mu}} \right)^{\frac{2^*_\mu}{2^*_s}}+  |\overline{u}|_\infty^{2^*_\mu-1}\left(\int_{ \Om} (u^{r-1})^{\frac{2^*_s}{2^*_\mu}} \right)^{\frac{2^*_\mu}{2^*_s}} \right)\\
	&	 \leq Cr \left(|u|_{2^*_s}^{2^*_\mu} + |\overline{u}|_{2^*_s}^{2^*_\mu}\right) \left(\left( 1+ \int_{ \Om} (u^{2^*_\mu+r-2})^{\frac{2^*_s}{2^*_\mu}} \right)+  |\overline{u}|_\infty^{2^*_\mu-1}\left(1+\int_{ \Om} (u^{r-1})^{\frac{2^*_s}{2^*_\mu}} \right) \right)\\
	& \leq 2Cr(1+|\overline{u}|_\infty^{2^*_\mu-1}+ |\Om|)\left(|u|_{2^*_s}^{2^*_\mu} + |\overline{u}|_{2^*_s}^{2^*_\mu}\right)  \left(1+ \int_{ \Om} (|u|^{r+2^*_\mu-2} )^{\frac{2^*_s}{2^*_\mu}}~dx \right).
	\end{aligned}
	\end{equation*}
	It implies 
	\begin{align}\label{si2}
	\left(1+  \int_{\Om } |u|^{ \frac{2^*_sr}{2} }~dx \right)^{\frac{2}{2^*(r-2)}}	   & 
	\leq   C_{r}^{\frac{1}{(r-2)}}  \left(1 +  \int_{ \Om}(u^{2^*_\mu-2+r } ) ^{\frac{2^*_s}{2^*_\mu}}~dx\right)^{\frac{1}{(r-2)}}
	\end{align}
	where $C_r = 4C r(1+|u|_{2^*_s}^{2^*_\mu}+|\Om|)\left(|u|_{2^*_s}^{2^*_\mu} + |\overline{u}|_{2^*_s}^{2^*_\mu}\right)$. For $j\geq 1$ we define $r_{j+1}$ inductively as 
	\begin{align*}
	(r_{j+1} + 2^*_\mu -2) \frac{2^*_s}{2^*_\mu}= \frac{2^*_sr_j}{2}. 
	\end{align*}
	That is, $(r_{j+1}-2)= \left(\frac{2^*_\mu}{2}\right)^j (r_1-2)$. From \eqref{si2} with $   C_{r_{j+1}} =  4C r_{j+1}(1+|u|_{2^*_s}^{2^*_\mu}+|\Om|)$,  it follows that 
	\begin{align*}
	\left(1+  \int_{\Om }|u|^{ \frac{2^*_sr_{j+1}}{2} }~dx \right)^{\frac{2}{2^*_s(r_{j+1}-1)}}	   & 
	\leq   C_{r_{j+1}}^{\frac{1}{(r_{j+1}-2)}}   \left(1 +  \int_{ \Om}(u^{2^*_\mu-2+r_{j} } ) ^{\frac{2^*_s}{2^*_\mu}}~dx\right)^{\frac{2}{2^*_s(r_{j}-2)}}. 
	\end{align*}
	Defining $A_j:= \left(1 +  \int_{ \Om}(u^{2^*_\mu-2+r_{j} } ) ^{\frac{2^*_s}{2^*_\mu}}~dx\right)^{\frac{2}{2^*_s(r_{j}-2)}}$. Then by Claim  and limiting argument, there exists $C_0>0$ such that 
	\begin{align*}
	A_{j+1}\leq \prod_{k=2}^{j+1} C_k^{(1/2(r_k-1))} A_1\leq C_0A_1. 
	\end{align*}
	Hence $|u|_{\infty}\leq C_0A_1$. That is $ u \in L^\infty(\Om)$. 	\QED
\end{proof}

\begin{Remark}
	We remark that if  $u \in X_0$ be any  weak solution of the following problem
		\begin{equation}\label{si15}
		(-\De)^s u
		=f(x,u)+  \left(\ds \int_{\Om}\frac{|u|^{2^*_\mu}(y)}{|x-y|^{\mu}}dy\right) |u|^{2^*_\mu-1}  \; \text{in}\;
		\Om,
		u=0 \; \text{ in } \R \setminus \Om,
		\end{equation}
		where $|f(x,u))|\leq C(1+ |u|^{2^*-1})$ and $\mu \leq \min\{ 4s,N\}$.  Then by using the same assertions as in Lemma \ref{lemsi15}, we obtain that   $ u \in L^\infty(\Om)$. This complements in the singular case previous results proved in \cite{newpaper}.
\end{Remark}

\begin{Lemma}\label{lemsi19}
	Let  $ z \in L^{2^*_s}(\Om)$ be a positive function,  let $ h(x,z) = \left(\ds \int_{\Om}\frac{z^{2^*_\mu}(y)}{|x-y|^{\mu}}dy\right) z^{2^*_\mu-1}$. Assume  $u \in X_0$ be a positive weak solution to 
	\begin{align}\label{si82}
	(-\De)^s u +g(x,u)= h(x,z) \text{ in } \Om, \qquad u=0 \text{ in } \R\setminus \Om. 
	\end{align}
	Then $(u+\overline{u} -\e)^+ \in X_0$ for every $\e>0$. 	
\end{Lemma}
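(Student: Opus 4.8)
The plan is to reduce the whole statement to an assertion about supports. Write $w:=u+\overline{u}$. I would prove that for each $\e>0$ the open set $\{w>\e\}$ is compactly contained in $\Om$; this suffices. Indeed, $(w-\e)^+$ always belongs to $H^s_{\text{loc}}(\Om)$ (since $u\in X_0\subset H^s(\R)$, $\overline{u}\in H^s_{\text{loc}}(\Om)$ by Remark \ref{remsi1}, and $t\mapsto(t-\e)^+$ is $1$-Lipschitz and vanishes at $t=0$), and it vanishes in $\R\setminus\Om$ because $w=0$ there. Now, any $f\in H^s_{\text{loc}}(\Om)$ that vanishes outside $\Om$ and is supported in a compact subset of $\Om$ lies in $X_0$: picking $\Om'$ with $\text{supp}\,f\Subset\Om'\Subset\Om$ one splits $\iint_{\R\times\R}\frac{|f(x)-f(y)|^2}{|x-y|^{N+2s}}\,dxdy$ (which equals $\|f\|^2$) into the part over $\Om'\times\Om'$, finite since $f\in H^s(\Om')$, and the part over the complement, where $f(y)=0$ unless $y\in\text{supp}\,f$ and then $|x-y|\ge\text{dist}(\text{supp}\,f,\R\setminus\Om')>0$, so that part is $\le C\|f\|_{L^2(\Om)}^2<\infty$.

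As a preliminary I would record that $w$ solves the translated singular equation with the frozen Choquard datum: combining the weak formulation of \eqref{si82} for $u$ (valid for all test functions in $X_0$, as in Lemma \ref{lemsi18}) with the identity $\ld\overline{u},\phi\rd=\int_\Om\overline{u}^{-q}\phi\,dx$ for \eqref{si5} (valid for compactly supported $0\le\phi\in X_0\cap L^\infty(\Om)$, by Proposition \ref{propsi1}, Remark \ref{remsi1} and the argument of Lemma \ref{lemsi20}), and using $g(x,s)=\overline{u}^{-q}-(s+\overline{u})^{-q}$, one gets
\[
\ld w,\phi\rd=\int_\Om w^{-q}\phi\,dx+\int_\Om h(x,z)\phi\,dx
\]
for all such $\phi$. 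Moreover, by the Hardy--Littlewood--Sobolev inequality $z^{2^*_\mu}\in L^{2^*_s/2^*_\mu}(\Om)$ yields $\int_\Om z^{2^*_\mu}(y)|x-y|^{-\mu}\,dy\in L^{2N/\mu}(\Om)$, and together with $z^{2^*_\mu-1}\in L^{2^*_s/(2^*_\mu-1)}(\Om)$ this gives $h(\cdot,z)\in L^{(2^*_s)'}(\Om)\subset X_0^*$; also $h(\cdot,z)\ge0$.

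The core of the proof is the compact containment of $\{w>\e\}$, equivalently $w\to0$ as $x\to\pa\Om$. Since $\overline{u}\in C^+_{\phi_q}$ we have $\overline{u}\le C\phi_q\to0$ at $\pa\Om$, so only $u$ must be controlled near $\pa\Om$. First I would run a Moser-type iteration on \eqref{si82} exactly as in the proof of Lemma \ref{lemsi15}, now with the fixed datum $h(\cdot,z)$ playing the role of the Choquard term — here $\mu\le4s$ is precisely what forces the exponents $(r_{j+1}-2)=(2^*_\mu/2)^j(r_1-2)$ to increase — to obtain $u\in L^\infty(\Om)$. Then, since $u\ge0$ forces $g(x,u)\ge0$, $u$ is a nonnegative weak subsolution of $(-\De)^sv=h(x,z)$; testing with $(u-U)^+\in X_0$, where $U\in X_0$ solves $(-\De)^sU=h(x,z)$ with $U=0$ in $\R\setminus\Om$, yields $0\le u\le U$. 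In the setting in which this lemma is used the datum $z$ (typically $z=u+\overline{u}$, bounded by Lemma \ref{lemsi15}) is in $L^\infty$, hence $h(\cdot,z)\in L^\infty(\Om)$, so $U\in C^s(\overline{\Om})$ with $U\le Cd^s$ by the boundary regularity of $(-\De)^s$ with bounded right-hand side. Therefore $w\le C(d^s+\phi_q)$ in $\Om$, so $\text{supp}(w-\e)^+=\overline{\{w>\e\}}$ is a compact subset of $\Om$, and by the first paragraph $(u+\overline{u}-\e)^+=(w-\e)^+\in X_0$.

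I expect the main obstacle to be this preliminary regularity step — the $L^\infty$-bound and, above all, the boundary decay of $u$. The Moser iteration sits exactly at the Sobolev-critical threshold and genuinely needs $\mu\le4s$ to converge, and the pointwise decay of $u$ at $\pa\Om$ cannot be read off from $u\in X_0$ alone: it rests on $h(\cdot,z)$ being better than merely an element of $X_0^*$, which in the applications is ensured by boundedness of $z$ (and hence of $u$, via Lemma \ref{lemsi15}). Everything after $w$ is known to vanish on $\pa\Om$ — namely the compact support of $(w-\e)^+$ and the elementary embedding of compactly supported $H^s_{\text{loc}}(\Om)$ functions into $X_0$ — is routine.
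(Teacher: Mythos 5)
Your reduction --- prove that $\{u+\overline{u}>\e\}$ is compactly contained in $\Om$ and then use that a compactly supported $H^s_{\text{loc}}(\Om)$ function vanishing outside $\Om$ lies in $X_0$ --- cannot be carried out under the stated hypotheses, and this is a genuine gap, not a technicality. The lemma assumes only $z\in L^{2^*_s}(\Om)$, so by Hardy--Littlewood--Sobolev the datum $h(\cdot,z)$ lies in $L^{(2^*_s)'}(\Om)=L^{2N/(N+2s)}(\Om)$, and since $N\ge 2s$ one has $2N/(N+2s)\le N/(2s)$: this integrability sits at or below the threshold needed for both of your regularity steps. A Moser iteration against a \emph{fixed} right-hand side in $L^{N/(2s)}$ does not yield an $L^\infty$ bound (the iteration in Lemma \ref{lemsi15} closes only because the Choquard term depends on $u$ itself, so the tail $\int_{u>R}u^{2^*_s}$ can be made small), and the comparison function $U$ with $(-\De)^sU=h(\cdot,z)$ satisfies $U\le Cd^s$ only when $h\in L^p$ with $p>N/(2s)$; otherwise $U$ is merely an $X_0$ function, and $X_0$ membership carries no pointwise boundary decay (for $s<1/2$ even $\chi_\Om\in X_0$). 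Your fallback that ``in applications $z$ is bounded'' is not available: in Lemma \ref{lemsi21} the present lemma is invoked for a weak solution $z$ of $(P_\la)$ which at that stage is only known to lie in $H^s_{\text{loc}}(\Om)\cap L^{2^*_s}(\Om)$; the boundedness of $z$ is deduced only \emph{after} Lemma \ref{lemsi19} has been used to identify $z=u+\overline{u}$. So the proof must work with $z\in L^{2^*_s}(\Om)$ alone.

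The paper's own proof is a pointer to \cite[Lemma 3.4]{GMS}, whose mechanism is energetic rather than geometric and avoids any pointwise control of $u$ near $\pa\Om$. Your first preliminary is the right starting point: $w:=u+\overline{u}$ satisfies $\ld w,\phi\rd=\int_\Om w^{-q}\phi\,dx+\int_\Om h(x,z)\phi\,dx$ for admissible $\phi\ge 0$. One then tests with bounded, compactly supported truncations of $(w-\e)^+$ (admissible because $u\in X_0$, $(\overline{u}-\e)^+\in X_0$ by Lemma \ref{lemsi16}(b), and $\overline{u}\in H^s_{\text{loc}}(\Om)$), and uses $\ld (w-\e)^+,(w-\e)^+\rd\le\ld w,(w-\e)^+\rd$ together with $\int_\Om w^{-q}(w-\e)^+\,dx\le\e^{-q}|w|_{1}<\infty$ and $h(\cdot,z)\in X_0^*$ to get the uniform bound $\|(w-\e)^+\|^2\le\e^{-q}|w|_{1}+\|h(\cdot,z)\|_{X_0^*}\|(w-\e)^+\|$, concluding by Fatou's lemma. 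That is the route to follow; the support-based reduction should be abandoned.
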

\begin{proof}
	Using the assertions and arguments used in \cite[Lemma 3.4]{GMS}, one can easily proof the result, we leave it for the readers. \QED
\end{proof}
\begin{Lemma}\label{lemsi21}
	Let $\la>0$ and let $z  \in H^s_{\text{loc}}(\Om)\cap L^{2^*_s}(\Om)$ be a weak solution to $(P_\la)$ as it is defined in   definition \ref{defisi1}. Then $z- \overline{u}$ is a positive weak solution to $(\widetilde{P_{\la}})$ belonging to $L^\infty(\Om)$. 
\end{Lemma}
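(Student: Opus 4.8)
Set $u:=z-\overline u$, where $\overline u$ is the classical solution of the purely singular problem \eqref{si5} furnished by Proposition \ref{propsi1}. The plan is to establish, in this order: (i) $u\ge0$ in $\Om$ and $u=0$ in $\R\setminus\Om$; (ii) $u\in X_0$; (iii) $u$ is simultaneously a sub- and a supersolution, hence a (positive) weak solution, of $(\widetilde{P_{\la}})$; (iv) $u\in L^\infty(\Om)$.

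For (i) I would invoke the very weak comparison principle of Lemma \ref{lemsi17} with $F=0$, taking $\overline u$ as subsolution and $z$ as supersolution. All its hypotheses hold: $\overline u,z\in H^s_{\mathrm{loc}}(\Om)$ by Remark \ref{remsi1} and Definition \ref{defisi1}; $\overline u,z>0$ a.e. in $\Om$ and $\ge0$ in $\R$; $\overline u^{-q},z^{-q}\in L^1_{\mathrm{loc}}(\Om)$ since $\overline u\in C^+_{\phi_q}$ and $z>m_K$ on every compact $K\subset\Om$; $\overline u\in L^\infty(\Om)\subset L^1(\Om)$; and $(\overline u-\e)^+\in X_0$ for all $\e>0$ by Lemma \ref{lemsi16}(b). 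Moreover, for every compactly supported $0\le\phi\in X_0\cap L^\infty(\Om)$ one has $\ld\overline u,\phi\rd=\int_\Om\overline u^{-q}\phi\,dx$ ($\overline u$ being a classical solution of \eqref{si5}, an analogue of Lemma \ref{lemsi20}), while Lemma \ref{lemsi20} gives
\begin{equation*}
\ld z,\phi\rd=\int_\Om z^{-q}\phi\,dx+\la\iint_{\Om\times\Om}\frac{z^{2^*_\mu}(x)\,z^{2^*_\mu-1}(y)\,\phi(y)}{|x-y|^\mu}\,dxdy\ \ge\ \int_\Om z^{-q}\phi\,dx,
\end{equation*}
the Hartree term being nonnegative. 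Hence Lemma \ref{lemsi17} yields $\overline u\le z$ a.e. in $\Om$, i.e. $u\ge0$, and $u=0$ outside $\Om$. (That $u>0$ in $\Om$ rather than merely $u\ge0$ follows from the strong maximum principle once the interior regularity of $z$ is available; in any case $u\not\equiv0$ because $\la>0$.)

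For (ii), subtracting the two weak identities above shows that, for every compactly supported $0\le\phi\in X_0\cap L^\infty(\Om)$,
\begin{equation*}
\ld u,\phi\rd+\int_\Om g(x,u)\phi\,dx=\la\iint_{\Om\times\Om}\frac{(u+\overline u)^{2^*_\mu}(u+\overline u)^{2^*_\mu-1}\phi}{|x-y|^\mu}\,dxdy=:(F,\phi),
\end{equation*}
with $g(x,u)=\overline u^{-q}-(u+\overline u)^{-q}\ge0$ (since $u\ge0$) and $g(\cdot,u)\in L^\infty_{\mathrm{loc}}(\Om)\subset L^1_{\mathrm{loc}}(\Om)$. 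Because $z=u+\overline u\in L^{2^*_s}(\Om)$, the Hardy--Littlewood--Sobolev and Hölder inequalities show that $F$ extends to a bounded linear functional on $X_0$ with $(F,\phi)\ge0$ whenever $\phi\ge0$, so $\ld u,\phi\rd\le(F,\phi)$ for all admissible $\phi$. To upgrade this to $u\in X_0$ I would, for fixed $\e>0$, test with admissible approximations of $(u-\e)^+$, namely $\phi=\eta\,T_M\big((u-\e)^+\big)$ where $T_M(t)=\min\{t,M\}$ makes the truncation bounded and $\eta\in C_c^\infty(\Om)$, $\eta\uparrow1$, makes it compactly supported in $\Om$, and then pass to the limits $M\to\infty$ and $\eta\uparrow1$, using the inequality $\ld h,h^+\rd\ge\|h^+\|^2$ with $h=u-\e$ and discarding the nonnegative $g$-term. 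This produces the uniform bound $\|(u-\e)^+\|^2\le(F,(u-\e)^+)\le\|F\|_{X_0^*}\,\|(u-\e)^+\|$, i.e. $\|(u-\e)^+\|\le\|F\|_{X_0^*}$ for all $\e>0$; letting $\e\downarrow0$, so that $(u-\e)^+\uparrow u$ in $L^{2^*_s}(\Om)$, and invoking weak lower semicontinuity of $\|\cdot\|$, I conclude $u\in X_0$.

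Finally, once $u\in X_0$ and $u\ge0$ (so $u^+=u\in X_0$ and $u^-=0\in X_0$), the identity of step (ii) --- valid in particular for all $0\le\phi\in C_c^\infty(\Om)$ --- shows that $u$ is at once a sub- and a supersolution of $(\widetilde{P_{\la}})$, hence a (positive) weak solution, and then Lemma \ref{lemsi15} (through the extended testing of Lemma \ref{lemsi18}) gives $u\in L^\infty(\Om)$; in particular $z=u+\overline u\in L^\infty(\Om)$ as well. \emph{The main obstacle is step (ii), the $X_0$-regularity of the difference $z-\overline u$}: for large $q$ the solution $z$ itself lies only in $H^s_{\mathrm{loc}}(\Om)$ and not in $X_0$, so $(z-\overline u-\e)^+$ is a priori neither bounded nor compactly supported in $\Om$, and the truncation--cutoff scheme together with the passage to the limit has to be carried out with care, exploiting the merely $H^s_{\mathrm{loc}}(\Om)$ regularity of $z$ and the good sign of the singular term $g$ --- the same kind of $H^s_{\mathrm{loc}}$ phenomenon already handled in Lemma \ref{lemsi17} and in \cite{GMS}.
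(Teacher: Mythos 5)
Your steps (i), (iii) and (iv) are sound and essentially follow the paper (comparison via Lemma \ref{lemsi17} with $F$ the Hartree functional to get $z\geq\overline u$, then Lemma \ref{lemsi15} for the $L^\infty$ bound). The problem is exactly the step you yourself flag as ``the main obstacle'': the membership $z-\overline u\in X_0$. Your proposed truncation--cutoff scheme $\phi=\eta\,T_M((u-\e)^+)$ is not carried out, and as stated it does not close. To pass to the limit $\eta\uparrow 1$ you must control the cross terms
\begin{equation*}
\iint \frac{(u(x)-u(y))\,(\eta(x)-\eta(y))\,T_M((u-\e)^+)(y)}{|x-y|^{N+2s}}\,dxdy ,
\end{equation*}
and any estimate of these (Cauchy--Schwarz, or $|\eta(x)-\eta(y)|\le C|x-y|$) requires a priori control of the Gagliardo seminorm of $u=z-\overline u$ on the region where $\eta$ is non-constant, which moves into the boundary layer as $\eta\uparrow1$. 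There $z$ is only known to be $H^s_{\mathrm{loc}}\cap L^{2^*_s}$, and the quantity you would need to bound is precisely $\|(u-\e)^+\|$ --- the thing you are trying to prove is finite. Note also that knowing $(z-\e)^+\in X_0$ and $(\overline u-\e)^+\in X_0$ separately does not yield $(z-\overline u-\e)^+\in X_0$: the Gagliardo seminorm is not monotone under pointwise domination. So there is a genuine circularity, not just a technicality.

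The paper avoids this entirely by a uniqueness argument rather than a direct estimate: it freezes the right-hand side, considers the auxiliary problem \eqref{si82} with datum $h(x,z)\in X_0^*$, produces a nonnegative solution $u\in X_0$ of that problem by minimizing $I_{K_0}$ over the cone $K_0=\{u\ge0\}$ (Proposition \ref{propsi2}), checks $(u+\overline u-\e)^+\in X_0$ via Lemma \ref{lemsi19}, and then applies the comparison principle Lemma \ref{lemsi17} \emph{in both directions} to the two functions $u+\overline u$ and $z$, which satisfy the same integral identity with the same fixed $F$. This forces $z=u+\overline u$, so $z-\overline u=u$ inherits $X_0$-membership from the variational construction for free. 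If you want to salvage your direct route you would have to supply the missing boundary-layer estimate; otherwise you should replace step (ii) by this frozen-problem-plus-uniqueness argument.
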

\begin{proof}
	Consider problem  \eqref{si82} with $z$ given. Then $0$ is a strict subsolution to \eqref{si82}.  Define the functional $I : X_0 \ra (-\infty, \infty]$ by 
	\begin{align*}
	I(u)= 
	\left\{
	\begin{array}{ll}
	\frac12 \|u\|^2 + \int_{\Om}G(x,u)~dx - \frac{\la}{22^*_\mu} \iint_{\Om\times \Om}\frac{z^{2^*_\mu}z^{2^*_\mu-1}u }{|x-y|^{\mu}}~dxdy &  \text{ if }  G(\cdot, u ) \in L^1(\Om) , \\
	\infty  & \text{ otherwise. }  
	\end{array} 
	\right.
	\end{align*}
	Moreover for the closed convex set $K_0 = \{u \in X_0~:~ u \geq 0 \}$ we define 	$I_{K_0} : X_0 \ra  (-\infty, \infty]$ by 
	\begin{align*}
	I_{K_0}(u)= 
	\left\{
	\begin{array}{ll}
	I(u)  &  \text{ if }   u \in K_0 \text{ and }G(\cdot, u ) \in L^1(\Om) , \\
	\infty  & \text{ otherwise. }  
	\end{array} 
	\right.
	\end{align*}
	we  can easily prove that there exists $u \in K_0$ such that $I_{K_0}(u)=  \inf I_{K_0}(K_0)$.  It implies that $0 \in \pa^- I_{K_0} (u)$. Now from Proposition \ref{propsi2}, we obtain that $u$ is a non negative solution to \eqref{si82}.  Using the Lemma \ref{lemsi19}, Lemma \ref{lemsi20} and  assertions as in Lemma \ref{lemsi18}, we obtain that  $(u+\overline{u}-\e)^+ \in X_0$ for every $\e>0$ and
	\begin{align*}
	& \ld u+\overline{u}, v \rd  - \int_{ \Om} (u+\overline{u})^{-q}v~dx - \iint_{\Om\times \Om}\frac{z^{2^*_\mu}z^{2^*_\mu-1}v}{|x-y|^{\mu}}~dxdy=0\\
	& 	\ld z, v \rd  - \int_{ \Om} z^{-q}v~dx - \iint_{\Om\times \Om}\frac{z^{2^*_\mu}z^{2^*_\mu-1}v}{|x-y|^{\mu}}~dxdy=0
	\end{align*}
	for all  compactly supported $0\leq v \in X_0 \cap L^\infty(\Om)$. 
	To prove the above equations for all  compactly supported $0\leq v \in X_0 \cap L^\infty(\Om)$ one can use the fact that $u \in X_0,~ \overline{u} \in H^s_{\text{loc}}(\Om)$ (See Remark \ref{remsi1})  and the assertions as in Lemma \ref{lemsi20} and Lemma \ref{lemsi18}.	Now using the Lemma \ref{lemsi17},  we get $z= u+\overline{u}$. That $u = z-\overline{u}$ is a solution to $(\widetilde{P_{\la}})$. And from Lemma \ref{lemsi15}, we have $ u \in L^\infty(\Om)$. \QED
\end{proof}
\begin{Lemma}\label{lemsi2}
	Let $\mu \leq \min\{ 4s,N\}$. 	Let $u $ be any  weak solution of problem $(P_\la)$. Then $ u \in L^\infty(\Om)\cap C^+_{\phi_q}(\Om)\cap C^\ga(\R) $ where $\ga$ is defined \eqref{si4}.
\end{Lemma}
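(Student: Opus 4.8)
The plan is to bootstrap from the already-established facts. First, by Lemma \ref{lemsi21}, if $u$ is a weak solution of $(P_\la)$ then $u-\overline{u}$ is a positive weak solution of $(\widetilde{P_\la})$ and, crucially, $u-\overline{u}\in L^\infty(\Om)$ (by Lemma \ref{lemsi15}); since $\overline{u}\in L^\infty(\Om)$ by Proposition \ref{propsi1}, we get at once $u=(u-\overline{u})+\overline{u}\in L^\infty(\Om)$. This disposes of the $L^\infty$ part and also tells us that the Choquard term $x\mapsto \big(\int_\Om \frac{u^{2^*_\mu}(y)}{|x-y|^\mu}\,dy\big)u^{2^*_\mu-1}(x)$ is a bounded function on $\Om$ (the Riesz potential of an $L^\infty$, hence $L^1\cap L^\infty$, function on a bounded domain is bounded since $\mu<N$).

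Next I would rewrite the equation for $u$ as $(-\De)^s u = u^{-q} + f_\la(x)$ in $\Om$, $u=0$ in $\R\setminus\Om$, where $f_\la\in L^\infty(\Om)$ is the Choquard term just identified. The idea is to compare $u$ with the purely singular solution $\overline{u}$ and with a solution of a regular problem. Set $w$ to be the (bounded, by fractional elliptic regularity \cite{RS}) solution of $(-\De)^s w = f_\la$ in $\Om$, $w=0$ outside; then $\overline{u}$ is a subsolution and $\overline{u}+w$ (or a suitable supersolution built from the two pieces) controls $u$ from above, while $\overline{u}$ controls it from below, via the very weak comparison principle of Lemma \ref{lemsi17} applied with the right-hand side $F=f_\la\in X_0^*\cap L^\infty$. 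Concretely, Lemma \ref{lemsi17} gives $\overline{u}\le u$ a.e.\ (comparing $u$ against $\overline{u}$, since $(-\De)^s\overline{u}=\overline{u}^{-q}\le \overline{u}^{-q}+f_\la$), and hence $u\in C^+_{\phi_q}(\Om)$ will follow once we show the matching upper bound $u\le C\phi_q$. For that upper bound, I would use that $v:=u-\overline{u}\in X_0\cap L^\infty(\Om)$ solves $(-\De)^s v + \overline{u}^{-q}-(v+\overline{u})^{-q}=f_\la$ with bounded right-hand side and a nonnegative singular perturbation, so $v$ satisfies $(-\De)^s v\le f_\la\in L^\infty$, whence $v\le \widetilde{C}\,d^s$ by the boundary estimate of \cite{RS}; combined with $\overline u\in C^+_{\phi_q}$ and $\phi_q\gtrsim d^{\min\{s,2s/(q+1)\}}$, this yields $u\le C\phi_q$ and therefore $u\in C^+_{\phi_q}(\Om)$.

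For the Hölder regularity, the strategy is: on any compact $\Om'\Subset\Om$ we have $u\ge m_{\Om'}>0$, so $u^{-q}$ is bounded there and $u$ solves $(-\De)^s u=u^{-q}+f_\la$ with bounded right-hand side locally; interior regularity for $(-\De)^s$ then gives $u\in C^\infty_{loc}(\Om)$ (cf.\ Remark \ref{remsi1}), in particular $u\in H^s_{loc}(\Om)$. The global $C^\ga(\R)$ statement is then obtained exactly as for the purely singular problem in \cite{adi}: writing $u=\overline{u}+v$ with $\overline u\in C^\ga(\R)$ (Proposition \ref{propsi1}(ii)) and $v\in X_0\cap L^\infty$ solving a problem with bounded data and nonnegative singular lower-order term, $v\in C^s(\R)$ by \cite{RS}, and since $\ga\le s$ in all three cases of \eqref{si4}, the sum lies in $C^\ga(\R)$.

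The main obstacle I anticipate is constructing and justifying the comparison \emph{supersolution} for the upper barrier when $q\ge 1$, i.e.\ making rigorous the inequality $(-\De)^s v\le f_\la$ for $v=u-\overline{u}$ in the very weak sense required by Lemma \ref{lemsi17}, given that $v$ is a priori only in $X_0\cap L^\infty$ and the singular terms $\overline u^{-q}$ and $(v+\overline u)^{-q}$ are only $L^1_{loc}$; one must test carefully with compactly supported functions and use $(v+\overline u)^{-q}\le \overline u^{-q}$ (monotonicity, since $v\ge 0$), together with the $H^s_{loc}$ information on $\overline u$ from Remark \ref{remsi1}, to split the bilinear form as in the proof of Lemma \ref{lemsi20}. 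Once that inequality is in place, everything else is a direct application of the cited results plus the standard fractional boundary estimates.
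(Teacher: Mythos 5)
Your first step ($u\in L^\infty(\Om)$ via Lemmas \ref{lemsi21} and \ref{lemsi15}, then absorbing the Choquard term into a bounded function $f_\la$) and your lower bound ($\overline{u}\le u$ via Lemma \ref{lemsi17}) coincide with the paper's argument. Your upper bound is a genuinely different route: the paper compares $u$ with the solution $\hat u$ of $(-\De)^s\hat u=\hat u^{-q}+\la c$, $c=C^*|u|_\infty^{2\cdot 2^*_\mu-1}$, again via Lemma \ref{lemsi17} and the known membership $\hat u\in C^+_{\phi_q}$ from \cite{adi}; you instead bound $v=u-\overline{u}$ by a multiple of $d^s$ using $(-\De)^s v\le f_\la$ and \cite{RS}, and then use $d^s\lesssim\phi_q$. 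That works (the inequality $(-\De)^s v\le f_\la$ follows from the weak formulation of $(\widetilde{P_\la})$ since $g(x,v)\ge0$, and extends to nonnegative $X_0$ test functions by the density argument of Lemma \ref{lemsi18}), and it avoids invoking the existence of $\hat u$.

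The genuine gap is in your H\"older step. You claim $v=u-\overline{u}\in C^s(\R)$ by \cite{RS} because $v$ solves a problem ``with bounded data and nonnegative singular lower-order term.'' But the equation for $v$ is $(-\De)^s v=f_\la-\bigl(\overline{u}^{-q}-u^{-q}\bigr)$, and the lower-order term is \emph{not} bounded: $0\le\overline{u}^{-q}-u^{-q}\le q\,\overline{u}^{-q-1}v$, and with $v\lesssim d^s$ and $\overline{u}\simeq\phi_q$ this only gives a bound of order $d^{-sq}$ (for $q<1$) or $d^{-s}$ (for $q>1$) near $\pa\Om$, which is unbounded for every $q>0$ and has poor integrability. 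So the boundary regularity theory of \cite{RS} does not apply to $v$, and the decomposition $u=\overline{u}+v$ does not yield $C^\ga(\R)$ regularity of $u$ this way. The correct conclusion of the argument — and what the paper does — is to observe that, once $u\in L^\infty(\Om)\cap C^+_{\phi_q}(\Om)$ is established and the Choquard term is identified as a bounded function, $u$ itself is a classical solution of $(-\De)^s u=u^{-q}+f_\la$ in the sense of \cite{adi}, so \cite[Theorem 1.2]{adi} applies directly to $u$ and gives $u\in C^\ga(\R)$ with $\ga$ as in \eqref{si4}. With that substitution your proof closes; as written, the last step does not.
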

\begin{proof}
	Let $u$ be any  weak solution of problem $(P_\la)$. Employing  Lemma \ref{lemsi21},  $u-\overline{u} \in X_0$ is the solution to $(\widetilde{P_{\la}})$ and which on taking account Lemma \ref{lemsi5}, we have  $u-\overline{u} \in L^\infty(\Om)$. Therefore, $u = (u-\overline{u})+ \overline{u} \in L^\infty(\Om)$. 
	Let  $\hat{u}$ be a  unique solution (See \cite[Theorem 1.2, Remark 1.5]{adi}) to the following problem
	\begin{align*}
	(-\De)^s \hat{u} = \hat{u}^{-q}+\la c, u>0 \text{ in } \Om, \hat{u} =0 \text{ in } \R\setminus \Om
	\end{align*}
	where $c= C^*|u|_\infty^{22^*_\mu-1} $ with $C^*= \bigg|\ds \int_{\Om}\frac{dy}{|x-y|^{\mu}}\bigg|_\infty$.   Practising Lemma \ref{lemsi17}, one  can easily show that $\overline{u}\leq u \leq \hat{u}$ a.e in $\Om$.  
	Now using the fact that  $\overline{u}\leq u \leq \hat{u}$ a.e in $\Om$ and regularity of $\overline{u}$ and $\hat{u}$ we obtain $ u \in C^+_{\phi_q}(\Om)$.  Observe that $u$ is a classical solution in sense of \cite[Definition 1]{adi} so by  \cite[Theorem 1.2]{adi},  H\"older's regularity follows. 
	\QED
\end{proof}
{\bf Proof of Theorem \ref{thmsi5}}: It follows from the proof of Lemma \ref{lemsi2} and of Lemma \ref{lemsi16}.\QED

\section{Existence of first solution}
 In this section, we have prove the existence of first solution and further establish that the first solution is actually a local minimizer of an appropriate functional.  We start the section by defining the functional associated with $(\widetilde{P_{\la}})$. 
 Consider  the functional $\mc J : X_0 \ra (-\infty, \infty]$  associated with 
 \begin{align*}
 \mc J(u)= 
 \left\{
 \begin{array}{ll}
 \frac12 \|u\|^2 + \int_{\Om}G(x,u)~dx - \frac{\la}{22^*_\mu} \iint_{\Om\times \Om}\frac{|u|^{2^*_\mu}|u|^{2^*_\mu}}{|x-y|^{\mu}}~dxdy &  \text{ if }  G(\cdot, u ) \in L^1(\Om) , \\
 \infty  & \text{ otherwise. }  
 \end{array} 
 \right.
 \end{align*}
For any convex subset $K \subset X_0$ we define the functional  $\mc J_K : X_0 \ra  (-\infty, \infty]$ by 
\begin{align*}
\mc J_K(u)= 
\left\{
\begin{array}{ll}
\mc J(u)  &  \text{ if }   u \in K \text{ and }G(\cdot, u ) \in L^1(\Om) , \\
\infty  & \text{ otherwise. }  
\end{array} 
\right.
\end{align*}
Define $\La:= \sup\{ \la>0\; : \; (P_\la) \text{ has a weak solution} \} $. 
\begin{Lemma}\label{lemsi14}
	Let $K$ be a convex subset of $X_0$ and let $w \in X_0$. Let $ u \in K $ with $G(\cdot, u) \in L^1(\Om)$. Then the following assertions are equivalent:
	\begin{enumerate}
		\item [(i)]	$ \al \in \pa ^- \mc J_K (u)$.
		\item [(ii)] For every $ w  \in K$ with $G(\cdot , w) \in L^1(\Om)$, we have $g(\cdot,u )(w-u) \in L^1(\Om)$  and 
		\begin{align*}
	\ld \al , w- u \rd & \leq 	\ld u, (w-u)\rd + \int_{\Om} g(x,u)(w-u)~dx\\
	&  -\la \iint_{\Om\times \Om}\frac{(u+\overline{u})^{2^*_\mu}(u+\overline{u})^{2^*_\mu-1}(w-u)}{|x-y|^{\mu}}~dxdy. 
		\end{align*}
	\end{enumerate}	
\end{Lemma}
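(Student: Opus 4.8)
The statement to prove is the equivalence, for $u \in K$ with $G(\cdot,u) \in L^1(\Om)$, between $\al \in \pa^- \mc J_K(u)$ and the variational inequality in (ii). The natural strategy is to use the splitting structure of $\mc J_K$: write $\mc J_K = \mc J_1 + \mc J_{0,K}$, where $\mc J_1(u) = -\frac{\la}{22^*_\mu} \iint_{\Om\times\Om} \frac{|u|^{2^*_\mu}|u|^{2^*_\mu}}{|x-y|^\mu}\,dx\,dy$ is a $C^1$ functional on $X_0$ (by the Hardy--Littlewood--Sobolev inequality and the subcritical-type growth in the Sobolev sense), and $\mc J_{0,K}(u) = \frac12\|u\|^2 + \int_\Om G(x,u)\,dx$ when $u \in K$ and $G(\cdot,u) \in L^1(\Om)$, $+\infty$ otherwise. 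The key point is that $\mc J_{0,K}$ is \emph{convex}: the map $u \mapsto \frac12\|u\|^2$ is convex, and $y \mapsto G(x,y)$ is convex because its derivative $g(x,y) = \overline{u}^{-q} - (y+\overline{u})^{-q}$ is nondecreasing in $y$ (on the region $y + \overline{u} > 0$, extended by $-\infty$, so $G$ is $+\infty$ past the singularity), and the indicator of the convex set $K$ is convex. Then by the Remark following the definition of $\pa^-$, one has $\pa^- \mc J_K(u) = \na \mc J_1(u) + \pa \mc J_{0,K}(u)$, so $\al \in \pa^-\mc J_K(u)$ iff $\al - \na \mc J_1(u) \in \pa \mc J_{0,K}(u)$, i.e. iff for all $w \in X_0$,
\begin{align*}
\ld \al - \na\mc J_1(u), w - u\rd \leq \mc J_{0,K}(w) - \mc J_{0,K}(u).
\end{align*}

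First I would compute $\na \mc J_1(u)$ explicitly: $\ld \na\mc J_1(u), \phi\rd = -\la \iint_{\Om\times\Om}\frac{(u+\overline u)^{2^*_\mu}(u+\overline u)^{2^*_\mu-1}\phi}{|x-y|^\mu}\,dx\,dy$ — wait, more carefully, $\mc J_1$ as written involves $|u|^{2^*_\mu}$, but since on $K_0$-type sets we work with $u+\overline u$; I would state it with the correct integrand matching (ii), namely the derivative produces exactly the Choquard term appearing in (ii). Next, for the convex part, the subdifferential inequality $\ld \beta, w-u\rd \leq \mc J_{0,K}(w) - \mc J_{0,K}(u)$ for all $w$ needs to be unpacked. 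The forward direction (i) $\Rightarrow$ (ii) is then: given $\al \in \pa^-\mc J_K(u)$, for $w \in K$ with $G(\cdot,w)\in L^1(\Om)$ we get $\ld \al - \na\mc J_1(u), w-u\rd \leq \frac12\|w\|^2 - \frac12\|u\|^2 + \int_\Om (G(x,w)-G(x,u))\,dx$; using convexity of $t \mapsto \frac12\|u + t(w-u)\|^2$ and of $G$, I replace the right-hand side by its "derivative at $t=0$" upper bound, $\ld u, w-u\rd + \int_\Om g(x,u)(w-u)\,dx$, which simultaneously forces $g(\cdot,u)(w-u) \in L^1(\Om)$ — this integrability is obtained via monotone convergence applied to $\frac{G(x, u+t(w-u)) - G(x,u)}{t}$ as $t \downarrow 0$, since convexity makes the difference quotients monotone. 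Rearranging gives exactly (ii).

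For the reverse direction (ii) $\Rightarrow$ (i), I would verify that the inequality in (ii), which a priori only tests against $w \in K$, actually implies the defining $\varliminf$ condition for $\al \in \pa^-\mc J_K(u)$. Here one uses that for $w \notin K$ or with $G(\cdot,w) \notin L^1(\Om)$ we have $\mc J_K(w) = +\infty$ so the quotient is trivially controlled, while for admissible $w$ one combines (ii) with the convexity estimate $G(x,w) - G(x,u) \geq g(x,u)(w-u)$ and $\frac12\|w\|^2 - \frac12\|u\|^2 \geq \ld u, w-u\rd$ and the first-order Taylor expansion with remainder for the $C^1$ functional $\mc J_1$ (Lipschitz-type control of $\na\mc J_1$ near $u$, again from Hardy--Littlewood--Sobolev), to conclude $\mc J_K(w) - \mc J_K(u) - \ld \al, w-u\rd \geq o(\|w-u\|)$.

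The main obstacle I anticipate is the careful handling of the integrability statement "$g(\cdot,u)(w-u) \in L^1(\Om)$" and the passage to the limit in the difference quotients of $\int_\Om G(x,\cdot)$: because $G$ can take the value $+\infty$ (the singularity when $y + \overline u \le 0$) and $g$ blows up near the boundary like $\overline u^{-q}$, one must argue with monotone/dominated convergence on the sign-definite pieces $g(x,u)(w-u)^+$ and $g(x,u)(w-u)^-$ separately, exactly as in the proof of Lemma \ref{lemsi18}, rather than naively differentiating under the integral. The $C^1$ and convexity structure, once set up, makes everything else essentially the standard Moreau--Rockafellar calculus for $\pa^-$ of a sum recalled in the Remark, so the bulk of the work is the singular-term bookkeeping.
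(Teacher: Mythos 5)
Your proposal is correct and follows essentially the same route as the paper: the forward direction via the monotone (in $t$) difference quotients of $\int_\Om G(x,u+t(w-u))\,dx$ along the segment $u+t(w-u)$, with monotone convergence yielding both the integrability of $g(\cdot,u)(w-u)$ and the limit, and the converse via the convexity inequalities $G(x,w)-G(x,u)\geq g(x,u)(w-u)$, $\tfrac12\|w\|^2-\tfrac12\|u\|^2\geq \ld u,w-u\rd$ together with the first-order Taylor remainder of the $C^1$ Choquard part. The only cosmetic difference is that you route the argument through the explicit sum rule $\pa^-(\mc J_1+\mc J_{0,K})=\na\mc J_1+\pa\mc J_{0,K}$ from the Remark, whereas the paper performs the same difference-quotient computation directly on the full functional; the analytic content is identical.
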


\begin{proof}
	(i) implies (ii).  Let $ w \in K $ and $G(\cdot, w ) \in L^1(\Om) $. Define  $z= w-u$. Then clearly since $g(x,u)$ is increasing in $u$, we have $g(x,u)z \leq G(x,w)- G(x,u)$. Moreover, $(g(\cdot,u)z)\vee 0 \in L^1(\Om)$ and $t \mapsto (G(x,u+tz)- G(x,u))/t, (0,1] \ra \mathbb{R}$, is increasing  and 
	\begin{align*}
	\frac{\mc J_K(u+tz)- \mc J_K(u)}{t} & = \ld u,w\rd +\frac{t\|z\|^2}{2} + \int_{ \Om}\frac{(G(x,u+tz)- G(x,u))}{t}\\
	& \quad - \frac{1}{22^*_\mu t }  \iint_{\Om\times \Om}\frac{(u+\overline{u}+tz)^{2^*_\mu}(u+\overline{u}+tz)^{2^*_\mu}}{|x-y|^{\mu}}~dxdy \\
	& \quad  + \frac{1}{22^*_\mu t } \iint_{\Om\times \Om}\frac{(u+\overline{u})^{2^*_\mu}(u+\overline{u})^{2^*_\mu}}{|x-y|^{\mu}}~dxdy. 
	\end{align*}
	Passing to the limit as $t \ra 0$ and using the fact that $\al \in \pa^-\mc J_K(u)$, we deduce the required result. 
	(ii) implies (i). Let $z \in K$ and $G(\cdot, w ) \in L^1(\Om)$. Employing the fact that $G(x,s)$ is convex is $s$ and using  (ii) we have that 
	\begin{align*}
	\mc J_K(w)- \mc J_K(u)&  = \frac{1}{2} \|z\|^2  +\int_{ \Om}(G(x,w)-G(x,u)- g(x,u)z)~dx + \ld \al, z\rd \\
	& \quad  - \frac{\la}{22^*_\mu} \iint_{\Om\times \Om}\frac{\left( (w+\overline{u})^{2^*_\mu}(w+\overline{u})^{2^*_\mu} - (u+\overline{u})^{2^*_\mu}(u+\overline{u})^{2^*_\mu}\right) }{|x-y|^{\mu}}~dxdy\\
	& \quad + \la \iint_{\Om\times \Om}\frac{(u+\overline{u})^{2^*_\mu}(u+\overline{u})^{2^*_\mu-1}z}{|x-y|^{\mu}}~dxdy.  
	\end{align*} 
	It implies that $ \al \in \pa^-\mc J_K (u)$. \QED
\end{proof}
	For any functions $\varphi, \psi : \Om \ra [-\infty,+\infty]$, we define the following subspaces 
	\begin{align*}
	K_\varphi= \{ u \in X_0\; : \varphi \leq u \text{ a.e} \}, K^\psi= \{ u \in X_0\; : u \leq \psi \text{ a.e} \}, K_\varphi^\psi= \{ u \in X_0\; : \varphi \leq u \leq \psi\text{ a.e} \}. 
	\end{align*}
	
	\begin{Proposition}\label{propsi2}
		Assume one the following condition holds:
		\begin{enumerate}
			\item [(i)] $\phi_1$ is  a subsolution to $(\widetilde{P_\la}), \; G(x,w(x)) \in L^1_{\text{loc}}(\Om)$ for all $ w \in K_{\phi_1}, u \in D(\mc J_{K_{\phi_1}})$ and $0 \in \pa^-\mc J_{K_{\phi_1}}(u)$.
				\item [(ii)] $\phi_2$ is  a supersolution to $(\widetilde{P_\la}), \; G(x,w(x)) \in L^1_{\text{loc}}(\Om)$ for all $ w \in K^{\phi_2}, u \in D(\mc J_{K^{\phi_2}})$ and $0 \in \pa^-\mc J_{K^{\phi_1}}(u)$.
				\item [(iii)] 	 $\phi_1, \phi_2$ are  subsolution  and supersolution to $(\widetilde{P_\la}),\; \phi_1\leq \phi_2,  \; G(x, \phi_1), G(x, \phi_2) \in L^1_{\text{loc}}(\Om)$, $ u \in D(\mc J_{K_{\phi_1}^{\phi_2}})$ and $0 \in \pa^-\mc J_{K_{\phi_1}^{\phi_2}}(u)$.
		\end{enumerate}
	Then $u$ is weak solution to $(\widetilde{P_\la})$. 
	\end{Proposition}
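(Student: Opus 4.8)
For $w\in X_0$ and $v$ either in $X_0$ or bounded with compact support in $\Om$, and whenever the quantities below are finite, put
\[
\mc B_w(v):=\ld w,v\rd+\int_\Om g(x,w)\,v\,dx-\la\iint_{\Om\times\Om}\frac{(w+\overline u)^{2^*_\mu}(w+\overline u)^{2^*_\mu-1}v}{|x-y|^{\mu}}\,dxdy .
\]
The target is $\mc B_u(\phi)=0$ for every $\phi\in C_c^\infty(\Om)$: combined with the one-sided bounds $\phi_1\le u$ and/or $u\le\phi_2$ carried by $u\in K$ (which in particular give $u\ge 0$ when the lower obstacle is $0$), this says precisely that $u$ is both a sub- and a supersolution, hence a weak solution of $(\widetilde{P_\la})$. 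From $0\in\pa^-\mc J_K(u)$, Lemma \ref{lemsi14} provides the obstacle inequality $\mc B_u(w-u)\ge 0$ for every $w\in K$ with $G(\cdot,w)\in L^1(\Om)$, the map $v\mapsto\mc B_u(v)$ being linear on such increments; moreover, arguing by density as in Lemma \ref{lemsi18}, the sub-/supersolution relations for $\phi_1,\phi_2$ extend to $\mc B_{\phi_1}(v)\le 0$, resp.\ $\mc B_{\phi_2}(v)\ge 0$, for all compactly supported $0\le v\in X_0\cap L^\infty(\Om)$.

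Given $\phi\in C_c^\infty(\Om)$ and $t\in(0,1)$, the plan is to test the obstacle inequality with the order-preserving competitor obtained by truncating $u+t\phi$ against the obstacles: $w_t:=\max\{u+t\phi,\phi_1\}$ in case (i), $w_t:=\min\{u+t\phi,\phi_2\}$ in case (ii), and $w_t:=\min\{\max\{u+t\phi,\phi_1\},\phi_2\}$ in case (iii). In each case $w_t\in K$ and $w_t-u=t\phi-\sigma_t+\zeta_t$, where $\sigma_t:=(u+t\phi-\phi_2)^+$ (present in (ii), (iii)) and $\zeta_t:=(\phi_1-u-t\phi)^+$ (present in (i), (iii)) satisfy $0\le\sigma_t,\zeta_t\le t\|\phi\|_{L^\infty(\Om)}$ and are supported in $\{\phi>0\}$, resp.\ $\{\phi<0\}$, hence in compact subsets of $\Om$. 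Consequently $\sigma_t,\zeta_t\in X_0$, because a compactly supported $H^s_{\mathrm{loc}}(\Om)$ function extended by zero lies in $X_0$ (by the splitting of $Q$ used in the proof of Lemma \ref{lemsi17}), and $G(\cdot,w_t)\in L^1(\Om)$ by convexity of $G(x,\cdot)$ together with the local integrability hypotheses. Linearity of $\mc B_u$ then yields $t\,\mc B_u(\phi)\ge\mc B_u(\sigma_t)-\mc B_u(\zeta_t)$ in case (iii), and its one-term analogues $t\,\mc B_u(\phi)\ge-\mc B_u(\zeta_t)$ in case (i) and $t\,\mc B_u(\phi)\ge\mc B_u(\sigma_t)$ in case (ii).

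The heart of the matter is the asymptotics
\[
\liminf_{t\to 0^+}\tfrac1t\,\mc B_u(\sigma_t)\ \ge\ 0,\qquad \limsup_{t\to 0^+}\tfrac1t\,\mc B_u(\zeta_t)\ \le\ 0 ,
\]
which by the inequalities just obtained force $\mc B_u(\phi)\ge 0$ for every $\phi\in C_c^\infty(\Om)$ (the sign of $\phi$ is unconstrained because $\sigma_t$, resp.\ $\zeta_t$, is active exactly where $\phi>0$, resp.\ $\phi<0$); replacing $\phi$ by $-\phi$ then gives $\mc B_u(\phi)=0$. For the first limit I would write $\mc B_u(\sigma_t)=\mc B_{\phi_2}(\sigma_t)+\big(\mc B_u(\sigma_t)-\mc B_{\phi_2}(\sigma_t)\big)$ with $\mc B_{\phi_2}(\sigma_t)\ge 0$ by the supersolution property; on $\{\sigma_t>0\}$ one has $0\le\phi_2-u<t\phi$, so $\sigma_t\to 0$ a.e.\ and, being bounded in $X_0$, $\sigma_t\rightharpoonup 0$. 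In the difference, the Choquard part is $\ge 0$ (monotonicity of $\tau\mapsto(\tau+\overline u)^{2^*_\mu}(\tau+\overline u)^{2^*_\mu-1}$ and $u\le\phi_2$), the $g$-part is $O(t^2)$ (Lipschitz bound for $g(x,\cdot)$ on compact subsets of $\Om$, where $\overline u$ is bounded below, together with $\|\sigma_t\|_{L^\infty}\le t\|\phi\|_\infty$), and the quadratic part satisfies $\ld u-\phi_2,\sigma_t\rd\ge-t\ld\phi,\sigma_t\rd$ via $\ld h,h^+\rd\ge\|h^+\|^2$ with $h=u+t\phi-\phi_2$, where $\ld\phi,\sigma_t\rd\to 0$ by weak convergence; altogether $\mc B_u(\sigma_t)-\mc B_{\phi_2}(\sigma_t)=o(t)$, whence the first inequality. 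The bound for $\zeta_t$ is entirely symmetric, using that $\phi_1$ is a subsolution. I expect this last step to be the main obstacle: one must first certify that the truncation remainders $\sigma_t,\zeta_t$ are admissible test functions — this is where the $H^s_{\mathrm{loc}}$/zero-extension bookkeeping and the density argument of Lemma \ref{lemsi18} enter — and then quantify, at rate $o(t)$, the discrepancy between $\mc B_u$ and $\mc B_{\phi_1},\mc B_{\phi_2}$, which relies crucially on the uniform sup-norm smallness and the weak $X_0$-vanishing of $\sigma_t$ and $\zeta_t$.
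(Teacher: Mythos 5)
Your proposal is correct and follows the standard order-truncation argument for obstacle problems: testing the variational inequality of Lemma \ref{lemsi14} with the competitor $\min\{\max\{u+t\phi,\phi_1\},\phi_2\}$, splitting off the remainders $\sigma_t,\zeta_t$, and absorbing them at rate $o(t)$ via the sub/supersolution inequalities. This is precisely the proof of \cite[Proposition 4.2]{GMS} to which the paper defers for this statement, so your route coincides with the paper's.
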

	\begin{proof}
		Follow the \cite[Proposition 4.2]{GMS}, we have the required result. \QED
	\end{proof}
Let $\vartheta \in C^s(\R)\cap  X_0$ be the  unique solution which satisfies $(-\De)^s\vartheta = 1/2$ in $\Om$ in the sense of distributions. By the definition of $g$ and $G$, we obtain the following properties
\begin{Lemma}\label{lemsi10}
	\begin{enumerate}
		\item[(i)]Let $ u \in L^1_{\text{loc}}(\Om)$ such that $ \ds \text{ess inf}_K
		u >0 $ for any compact set $K \subset \Om$. Then $g(x,u(x)), G(x,u(x)) \in L^1_{\text{loc}}(\Om)$. 
		\item [(ii)] For all $x \in \Om$, the following holds
		\begin{itemize}
			\item [(a)] $G(x,st) \leq s^2G(x,t)$ for each $s \geq 1$ and $t\geq 0$. 
			\item [(b)] $G(x,s)-G(x,t) -(g(x,s)+g(x,t))(s-t)/2\geq 0 $ for each $s,t $ with $s\geq t > -\vartheta(x)$. 
			\item [(c)] $G(x,s)- g(x,s)s/2\geq 0 $ for each $s \geq 0 $. 
		\end{itemize}
	\end{enumerate}
	
\end{Lemma}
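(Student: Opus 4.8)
The plan is to reduce all the statements of the lemma to elementary pointwise facts about the one‑variable map $\tau\mapsto g(x,\tau)$ (for fixed $x\in\Om$), combined with the regularity of $\overline u$ recorded in Proposition \ref{propsi1}.

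For (i): since $\overline u\in C^+_{\phi_q}(\Om)$ there is $c_0>0$ with $\overline u\ge c_0\phi_q$ in $\Om$, and $\phi_q$ is continuous and strictly positive in $\Om$; hence on every compact $K\subset\Om$ one has $\inf_K\overline u\ge c_0\inf_K\phi_q>0$, so $\overline u^{-q}\in L^\infty_{\text{loc}}(\Om)$. Because $u>0$ a.e.\ (its essential infimum on compacts is positive), $u+\overline u\ge\overline u$ a.e., and the monotonicity of $\tau\mapsto(\tau+\overline u(x))^{-q}$ gives $0\le g(x,u(x))=\overline u(x)^{-q}-(u(x)+\overline u(x))^{-q}\le\overline u(x)^{-q}$, an $L^\infty_{\text{loc}}$ bound; similarly $0\le G(x,u(x))=\int_0^{u(x)}g(x,\tau)\,d\tau\le u(x)\,\overline u(x)^{-q}$, a product of an $L^1_{\text{loc}}$ and an $L^\infty_{\text{loc}}$ function. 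Both membership claims follow.

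For (ii): the key observation is that, for fixed $x\in\Om$, the map $\tau\mapsto g(x,\tau)=\overline u(x)^{-q}-(\tau+\overline u(x))^{-q}$ is smooth, strictly increasing and strictly \emph{concave} on the interval $(-\overline u(x),\infty)$ where it is finite, and $g(x,0)=0$; concavity is immediate from $\partial^2_{\tau\tau}g=-q(q+1)(\tau+\overline u(x))^{-q-2}<0$. Then (b) is precisely the assertion that the chord of a concave function lies below its graph: assuming (as the hypothesis $t>-\vartheta(x)$ does, since $\vartheta\le\overline u$ in $\Om$) that $[t,s]\subset(-\overline u(x),\infty)$, integrating $g(x,\tau)\ge g(x,t)+\frac{\tau-t}{s-t}\bigl(g(x,s)-g(x,t)\bigr)$ over $[t,s]$ and using $\int_t^s g(x,\tau)\,d\tau=G(x,s)-G(x,t)$ yields exactly (b). Taking $t=0$ in this chord bound gives $g(x,\tau)\ge\frac{\tau}{s}g(x,s)$ on $[0,s]$, and integrating yields $G(x,s)\ge\frac{s}{2}g(x,s)$, which is (c). Finally for (a): concavity together with $g(x,0)=0$ gives $g(x,s\sigma)\le s\,g(x,\sigma)$ for $s\ge1$, $\sigma\ge0$ (write $\sigma=\frac1s(s\sigma)+(1-\frac1s)\cdot0$); substituting $\tau=s\sigma$ in $G(x,st)=\int_0^{st}g(x,\tau)\,d\tau=s\int_0^{t}g(x,s\sigma)\,d\sigma$ and applying this bound gives $G(x,st)\le s^2G(x,t)$, the case $t=0$ being trivial.

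The only points needing any care are: in (i), extracting the uniform positive lower bound for $\overline u$ on compact subsets of $\Om$ from $\overline u\in C^+_{\phi_q}(\Om)$, which is exactly where Proposition \ref{propsi1} enters; and, in (b), checking that the hypothesis really does keep $s$ and $t$ inside the interval on which $g(x,\cdot)$ is finite and concave (this is the role of the threshold $-\vartheta(x)$ rather than $-\overline u(x)$). Everything else is a one‑line computation with concave functions, so I expect no genuine obstacle here.
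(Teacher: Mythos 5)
The paper offers no proof of this lemma at all (it is introduced with ``By the definition of $g$ and $G$, we obtain the following properties''), so your argument is supplying something the authors omitted. Your route --- reducing (ii) to the concavity of $\tau\mapsto g(x,\tau)=\overline{u}(x)^{-q}-(\tau+\overline{u}(x))^{-q}$ on $(-\overline{u}(x),\infty)$ together with $g(x,0)=0$, and (i) to the locally uniform lower bound for $\overline{u}$ coming from $\overline{u}\in C^+_{\phi_q}(\Om)$ --- is the natural one, and parts (i), (a) and (c) are proved completely and correctly.

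The one genuine gap is in (b): you invoke $\vartheta\le\overline{u}$ in $\Om$ to guarantee $[t,s]\subset(-\overline{u}(x),\infty)$, but you do not prove this inequality, and it does not follow from anything recorded in the paper. The natural attempt --- comparing $(-\De)^s\overline{u}=\overline{u}^{-q}$ with $(-\De)^s\vartheta=1/2$ --- only works if $\overline{u}^{-q}\ge 1/2$ throughout $\Om$, i.e.\ if $|\overline{u}|_\infty\le 2^{1/q}$, which is not guaranteed (a scaling argument on large domains suggests it can fail, since $\vartheta$ scales like $R^{2s}$ while $\overline{u}$ scales like $R^{2s/(1+q)}$). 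Without $\vartheta\le\overline{u}$ the statement of (b) on the range $t>-\vartheta(x)$ is itself problematic, since $g(x,t)=-\infty$ and $G(x,t)=+\infty$ once $t\le-\overline{u}(x)$, so the left-hand side is of the form $\infty-\infty$; this is really a defect of the lemma as stated rather than of your chord argument, and in every place the paper actually uses (b) one has $s\ge t\ge 0$, where your proof is complete. You should either supply a proof that $\vartheta\le\overline{u}$ (under an extra smallness hypothesis), or state (b) for $s\ge t>-\overline{u}(x)$ (equivalently $s\ge t\ge 0$, which suffices for all later applications) and note explicitly that this is the range on which your concavity argument applies.
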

\begin{Lemma}\label{lemsi12}
	The following hold:
	\begin{enumerate}
		\item [(i)] $0$ is the strict subsolution to $(\widetilde{P_\la})$ for all $\la>0$. 
		\item [(ii)] $\vartheta$ is a strict supersolution to $(\widetilde{P_\la})$ for all sufficiently small $\la>0$. 
		\item [(iii)]  Any positive  weak solution $w$ to $(\widetilde{P_{\la_2}})$ is a strict supersolution to $(\widetilde{P_{\la_1}})$ for $0<\la_1<\la_2$. 
	\end{enumerate} 
\end{Lemma}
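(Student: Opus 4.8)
The plan is to check, in each of the three cases, the three requirements in the definition of a subsolution (resp.\ supersolution) of $(\widetilde{P_\la})$: the sign condition on $u^{+}$ (resp.\ $u^{-}$), the local integrability $g(\cdot,u)\in L^1_{\text{loc}}(\Om)$, and the variational inequality tested against $\phi\in C_c^\infty(\Om)$ with $\phi\ge 0$; here ``strict'' is read as: that inequality holds with strict sign for every $\phi\not\equiv 0$, so that none of the three functions is itself a solution. \textbf{(i)} For $u\equiv 0$ we have $u^{+}=0\in X_0$ and, since $\overline u>0$ a.e.\ in $\Om$ (Proposition~\ref{propsi1}), $g(x,0)=\overline u^{-q}-\overline u^{-q}=0\in L^1_{\text{loc}}(\Om)$. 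The left-hand side of the subsolution inequality reduces to $-\la\iint_{\Om\times\Om}\overline u^{2^*_\mu}(x)\,\overline u^{2^*_\mu-1}(y)\,\phi(y)\,|x-y|^{-\mu}\,dx\,dy$, which is $\le 0$ and $<0$ whenever $\phi\ge 0$, $\phi\not\equiv 0$, because $\overline u>0$ and the Riesz kernel is positive; that is the entire argument.

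\textbf{(ii)} For $u=\vartheta$: by the maximum principle $\vartheta\ge 0$, so $\vartheta^{-}\in X_0$ trivially; since $\vartheta\in C^s(\R)$ and $\overline u\in C^+_{\phi_q}$, one has $\text{ess inf}_K\vartheta>0$ on every compact $K\subset\Om$, hence $g(\cdot,\vartheta)\in L^1_{\text{loc}}(\Om)$ by Lemma~\ref{lemsi10}(i), while $g(x,\vartheta)=\overline u^{-q}-(\vartheta+\overline u)^{-q}\ge 0$ because $\vartheta\ge 0$. Using $(-\De)^s\vartheta=1/2$ in the distributional sense, $\ld\vartheta,\phi\rd=\frac12\int_\Om\phi\,dx$ for $\phi\in C_c^\infty(\Om)$. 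Since both $\vartheta$ and $\overline u$ lie in $L^\infty(\Om)$ (Proposition~\ref{propsi1}), set $M:=|\vartheta|_\infty+|\overline u|_\infty<\infty$; since $\mu<N$ the Riesz potential $C^{*}:=\sup_{y\in\Om}\int_\Om|x-y|^{-\mu}\,dx$ is finite, so the nonlocal term is bounded above by $\la\,M^{2\cdot 2^*_\mu-1}C^{*}\int_\Om\phi\,dx$ (note $2^*_\mu-1\ge 0$ since $\mu<N+2s$). Hence, for every $\la<\big(2M^{2\cdot 2^*_\mu-1}C^{*}\big)^{-1}$ — a threshold depending on neither $\la$ nor $\phi$ — the supersolution inequality holds, and strictly when $\phi\not\equiv 0$, since $\frac12\int_\Om\phi\,dx>\la M^{2\cdot 2^*_\mu-1}C^{*}\int_\Om\phi\,dx$ and $g(x,\vartheta)\ge 0$.

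\textbf{(iii)} Let $w$ be a positive weak solution of $(\widetilde{P_{\la_2}})$; being a weak solution it is in particular a supersolution, so $w^{-}\in X_0$ and $g(\cdot,w)\in L^1_{\text{loc}}(\Om)$. Since $C_c^\infty(\Om)\subset X_0$, Lemma~\ref{lemsi18} gives, for $\phi\in C_c^\infty(\Om)$ with $\phi\ge 0$, the identity $\ld w,\phi\rd+\int_\Om g(x,w)\phi\,dx=\la_2\iint_{\Om\times\Om}(w+\overline u)^{2^*_\mu}(x)(w+\overline u)^{2^*_\mu-1}(y)\phi(y)|x-y|^{-\mu}\,dx\,dy$; subtracting the analogous $\la_1$-term with $0<\la_1<\la_2$, the resulting left-hand side equals $(\la_2-\la_1)\iint_{\Om\times\Om}(w+\overline u)^{2^*_\mu}(x)(w+\overline u)^{2^*_\mu-1}(y)\phi(y)|x-y|^{-\mu}\,dx\,dy>0$ for $\phi\not\equiv 0$ (as $\la_2>\la_1$ and $w+\overline u>0$), so $w$ is a strict supersolution of $(\widetilde{P_{\la_1}})$. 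The only step requiring care is (ii): securing the uniform $L^\infty$ bound on $\vartheta+\overline u$ and the finiteness of $C^{*}$ — precisely where the hypothesis $\mu<N$ (subsumed by $\mu\le\min\{4s,N\}$) is used — and then observing that the admissible range of $\la$ does not depend on the test function $\phi$. Parts (i) and (iii) follow at once from the positivity of $\overline u$, respectively of $w+\overline u$, together with Lemma~\ref{lemsi18}.
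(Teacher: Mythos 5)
Your proof is correct and follows essentially the same route as the paper: (i) is immediate from the positivity of $\overline{u}$, (ii) uses $(-\De)^s\vartheta=1/2$, $g(x,\vartheta)\geq 0$ and the uniform boundedness of the nonlocal term $\la\big(\int_{\Om}|x-y|^{-\mu}(\vartheta+\overline{u})^{2^*_\mu}\,dy\big)(\vartheta+\overline{u})^{2^*_\mu-1}$ for $\la$ small, and (iii) subtracts the $\la_1$- and $\la_2$-equations. Your version merely makes the paper's ``choose $\la$ small enough'' quantitative via the explicit threshold $\la<\big(2M^{2\cdot 2^*_\mu-1}C^{*}\big)^{-1}$, which is a harmless refinement.
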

	\begin{proof}
		(i) Trivial.\\
		(ii) Choose $\la$ small enough such that $\la\left(\ds\int_{\Om}\frac{(\vartheta+\overline{u})^{2^*_\mu}}{|x-y|^{\mu}}~dy\right)(\vartheta+\overline{u})^{2^*_\mu-1}<1$ in $\Om$. From Lemma  \ref{lemsi10}, $g(x,\vartheta), \; G(x,\vartheta)  \in  L^1_{\text{loc}}(\Om)$,  for all $\psi \in X_0 \setminus \{0\}$,  we deduce that 
		\begin{align*}
		& \ld \vartheta, \psi \rd + \int_{ \Om} g(x,\vartheta)\psi ~dx -\la \int_{\Om}\int_{\Om}\frac{(\vartheta+\overline{u})^{2^*_\mu}(\vartheta+\overline{u})^{2^*_\mu-1}\psi}{|x-y|^{\mu}}~dxdy\\
		& \geq \int\left(1-    \la \left(\ds\int_{\Om}\frac{(\vartheta+\overline{u})^{2^*_\mu}}{|x-y|^{\mu}}~dy\right)(\vartheta+\overline{u})^{2^*_\mu-1}\right)  \psi~dx >0. 
		\end{align*}
		(iii) Let  $0<\la_1<\la_2$ and $w$ be a positive weak solution to $(\widetilde{P_{\la_2}})$.  Then for all $\psi \in X_0 \setminus \{0\}$, we have 
		\begin{align*}
	& 	\ld w, \psi \rd +   \int_{ \Om} g(x,w)\psi ~dx -\la_1 \int_{\Om}\int_{\Om}\frac{(w+\overline{u})^{2^*_\mu}(w+\overline{u})^{2^*_\mu-1}\psi}{|x-y|^{\mu}}~dxdy\\
		& = (\la_2-\la_1) \int_{\Om}\int_{\Om}\frac{(w+\overline{u})^{2^*_\mu}(w+\overline{u})^{2^*_\mu-1}\psi}{|x-y|^{\mu}}~dxdy>0. 
		\end{align*}
		The proof is now complete. \QED
	\end{proof}
\begin{Theorem}\label{thmsi2}
	Let $ w_1, w_2 :\Om \ra [-\infty, + \infty]$ with $w_1 \leq w_2$ such that $w_1 $ is a strict subsolution to $(\widetilde{P_\la})$ and $u \in D(\mc J_{K_{w_1}^{w_2}})$ be a minimizer for $\mc J_{K_{w_1}^{w_2}}$. Then $u$ is a local minimizer for $\mc J_{K_{w_1}}$. 
\end{Theorem}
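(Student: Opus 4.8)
The goal is to show that a minimizer $u$ of $\mc J$ over the order interval $K_{w_1}^{w_2}$ is in fact a local minimizer over the one-sided constraint set $K_{w_1}$, i.e. over $\{v \in X_0 : v \geq w_1 \text{ a.e.}\}$. The standard strategy for such ``sub-supersolution trapping'' results is the truncation/penalization argument of Brezis--Nirenberg type, adapted here to the nonsmooth functional $\mc J$ and to the fractional, doubly nonlocal setting. The plan is as follows. Suppose $u$ is not a local minimizer for $\mc J_{K_{w_1}}$; then there is a sequence $v_k \in K_{w_1}$ with $v_k \to u$ in $X_0$ and $\mc J(v_k) < \mc J(u)$. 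First I would replace $v_k$ by its truncation $\tilde v_k := \min\{v_k, w_2\} = v_k - (v_k - w_2)^+$, which still lies in $K_{w_1}$ (since $w_1 \leq w_2$) and still converges to $u$ in $X_0$ (because $(u-w_2)^+ = 0$ a.e., so $(v_k - w_2)^+ \to 0$ in $X_0$ by continuity of the positive-part map). The point is that $\tilde v_k \in K_{w_1}^{w_2}$, so minimality of $u$ over that set gives $\mc J(u) \leq \mc J(\tilde v_k)$; hence it suffices to show $\mc J(\tilde v_k) \leq \mc J(v_k) + o(1)\cdot(\text{something controllable})$, and more precisely to derive a contradiction by comparing $\mc J(\tilde v_k)$ and $\mc J(v_k)$ on the set $\{v_k > w_2\}$.

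The heart of the matter is the estimate on the set $D_k := \{v_k > w_2\}$, whose measure tends to zero since $v_k \to u \leq w_2$. One writes $\mc J(v_k) - \mc J(\tilde v_k)$ as a sum of three contributions: the quadratic Gagliardo form, the singular term $\int G(x,\cdot)$, and the Choquard term. For the Choquard and the $G$-terms, continuity and the growth/convexity properties recorded in Lemma \ref{lemsi10} (in particular $G(x,s) - g(x,s)s/2 \geq 0$ and the monotonicity of $g$) show these differences are $o(1)$ uniformly, or have a favorable sign, on $D_k$. The delicate point is the quadratic part: for the local operator one simply drops a nonnegative boundary-type term, but here the nonlocal Gagliardo seminorm does not split cleanly. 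I would use the strict subsolution property of $w_1$ together with the fact that on $D_k$ one has $w_2 \leq v_k$, writing $v_k - \tilde v_k = (v_k - w_2)^+ \geq 0$ and testing the strict-subsolution inequality for $w_1$ — or rather exploiting that $w_2$ dominates $w_1$ and that $(v_k-w_2)^+$ is an admissible nonnegative test direction — to absorb the cross terms. Concretely, the inequality
\begin{align*}
\langle v_k, v_k \rangle - \langle \tilde v_k, \tilde v_k\rangle \geq \langle w_2 + (v_k-w_2)^+, (v_k - w_2)^+\rangle + \langle \text{(good terms)}\rangle
\end{align*}
combined with the supersolution inequality for $w_2$ tested against $(v_k - w_2)^+ \geq 0$ should yield $\mc J(v_k) - \mc J(\tilde v_k) \geq -\,\omega(|D_k|)$ with $\omega(t) \to 0$, in fact with the correct sign to conclude. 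Since $\mc J(v_k) < \mc J(u) \leq \mc J(\tilde v_k)$, this is the contradiction.

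The main obstacle I anticipate is precisely making the nonlocal truncation estimate rigorous: unlike the Dirichlet-energy case, $[\tilde v_k]_{H^s}^2$ is not bounded by $[v_k]_{H^s}^2$ minus an explicitly nonnegative remainder, so one must carefully split the double integral over $Q$ into the regions $D_k \times D_k$, $D_k \times D_k^c$, and $D_k^c \times D_k^c$, use the pointwise inequality $((a-b)^+)^2 \le (a-b)^2$ on the appropriate pieces, and control the mixed region using that $v_k = \tilde v_k$ off $D_k$ together with the strict inequality in the subsolution condition for $w_1$ (which provides the needed ``room'' to absorb an error that is $o(1)$ in $|D_k|$). This is the analogue, in the nonsmooth fractional framework, of the argument of Brezis--Nirenberg and its nonsmooth refinement; I would model the bookkeeping on Lemma \ref{lemsi17} (where similar region-splitting of the Gagliardo form against truncated test functions was carried out) and on \cite[Theorem 4.3]{GMS}. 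Once the quadratic estimate is in hand, the conclusion — that no such minimizing sequence $v_k$ can exist, hence $u$ is a local minimizer for $\mc J_{K_{w_1}}$ — is immediate.
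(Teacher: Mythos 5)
Your skeleton is the same as the paper's: argue by contradiction with a sequence $v_k\to u$ in $K_{w_1}$, truncate via $\sigma(v_k)=\min\{v_k,w_2\}=v_k-(v_k-w_2)^+$, use minimality of $u$ on $K_{w_1}^{w_2}$ to reduce everything to a lower bound on $\mc J(v_k)-\mc J(\sigma(v_k))$, handle the Gagliardo form by region-splitting and the monotonicity inequality $\ld \sigma(v_k),(v_k-w_2)^+\rd\geq\ld w_2,(v_k-w_2)^+\rd$, and bring in the supersolution inequality for $w_2$ tested against $(v_k-w_2)^+\geq 0$. The gap is in how you close the argument. The Choquard remainder is neither of favorable sign nor ``$o(1)$ uniformly'': after the mean value theorem, Hardy--Littlewood--Sobolev and H\"older, it is bounded by $C\,\bigl|(v_k-w_2)^+\bigr|^2_{2\cdot 2^*_s/2^*_\mu}+C\,\bigl|(v_k-w_2)^+\bigr|_{2\cdot 2^*_s/2^*_\mu}\|(v_k-w_2)^+\|$, i.e.\ it is \emph{comparable to}, not negligible against, the good term $\tfrac12\|(v_k-w_2)^+\|^2$; moreover the constant $C$ cannot be made small, because making the estimate uniform in $k$ requires an $L^{2^*_s}$-dominant of the whole sequence (the paper builds $l=u+\sum_n|v_n-u|$ -- the reason the sequence is taken with $\|v_n-u\|<2^{-n}$ -- and then splits the resulting weight at large levels $R_1,R_2$). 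Since the good and bad terms are both homogeneous of degree $2$ in $(v_k-w_2)^+$, the quadratic part alone cannot produce the contradiction, and your phrase ``absorb an error that is $o(1)$ in $|D_k|$'' does not describe a step that works.

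The missing idea is the \emph{quantitative} use of the strict supersolution property of $w_2$ (not, as you write at the end, the strict subsolution property of $w_1$: the truncation is from above, so the object that enters is the defect $\varXi(\phi)=\ld w_2,\phi\rd+\int_\Om g(x,w_2)\phi\,dx-\la\iint_{\Om\times\Om}\frac{(w_2+\overline{u})^{2^*_\mu}(w_2+\overline{u})^{2^*_\mu-1}\phi}{|x-y|^\mu}\,dxdy$ tested on $\phi=(v_k-w_2)^+\geq 0$; the theorem's own wording invites this confusion, since the hypothesis actually used is that $w_2$ is a strict supersolution, as in the application with $w_2=u_{\la'}$). The paper proves $\nu:=\inf\{\varXi(\phi)\,:\,\phi\geq 0,\ |\phi|_{2\cdot 2^*_s/2^*_\mu}=1,\ \|\phi\|\leq 2C^*\}>0$ by weak lower semicontinuity of $\varXi$ on this weakly sequentially closed set together with strictness. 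Because $\varXi$ is positively homogeneous of degree $1$ while every error term is of degree $2$, normalizing by $|(v_k-w_2)^+|_{2\cdot 2^*_s/2^*_\mu}\to 0$ (which follows from $(v_k-w_2)^+\leq|v_k-u|$ pointwise and $\mu\leq 4s$; you do not need, and should not claim without proof, $X_0$-convergence of the positive part) leaves the fixed positive constant $\nu$ standing against quantities that vanish. That degree-$1$ versus degree-$2$ mismatch is the actual mechanism of the proof, and it is absent from your proposal.
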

\begin{proof}
	For each $v \in K_{w_1}$ and $0\leq \phi \in X_0$, we define $\sigma(v)= \min \{ v,w_2\}= v- (v-w_2)^+$ and 
\begin{align*}
\varXi(\phi)= \ld w_2, \phi\rd  +   \int_{ \Om} g(x,w_2)\phi ~dx -\la \int_{\Om}\int_{\Om}\frac{(w_2+\overline{u})^{2^*_\mu}(w_2+\overline{u})^{2^*_\mu-1}\phi}{|x-y|^{\mu}}~dxdy. 
\end{align*}  
\textbf{Claim}:   $\ld \sigma(v),  v-\sigma(v)\rd\geq  \ld w_2,  v-\sigma(v)\rd$ and
\begin{align*}
 \int_{\Om}\int_{\Om}\frac{\left( (\sigma(v)+\overline{u})^{2^*_\mu}(\sigma(v)+\overline{u})^{2^*_\mu-1} - (w_2+\overline{u})^{2^*_\mu}(w_2+\overline{u})^{2^*_\mu-1}\right)   (v-\sigma(v))}{|x-y|^{\mu}}~dxdy \leq 0 . 
\end{align*}
Notice that $v-\sigma(v)= (v-w_2)^+$. Let $\Om_1= \text{supp}((v-w_2)^+)$. Then on $\Om_1, \sigma(v)= w_2$ and using the fact that $\sigma(v) \leq w_2$ on $\Om$, we have 
\begin{align*}
\ld \sigma(v),  v-\sigma(v)\rd & =  \left( 
\int_{\Om_1}\int_{\Om_1}  + 2\int_{\R\setminus \Om_1}\int_{\Om_1}  + \int_{\Om\setminus \Om_1}\int_{\Om \setminus \Om_1} + 2\int_{\R\setminus \Om}\int_{\Om\setminus \Om_1} \right.\\
& \left.    \quad \frac{(\sigma(v)(x)-\sigma(v)(y))((v-\sigma(v))(x)-(v-\sigma(v))(y))}{|x-y|^{N+2s}}~dxdy\right)
\\& \geq  \ld w_2,  v-\sigma(v)\rd . 
\end{align*}
Second holds by using the  fact that $\sigma(v) \leq w_2$ on $\Om$.  It implies that the Claim holds. 
Taking into account the fact that $u$ is a minimizer of  for $\mc J_{K_{w_1}^{w_2}},\; \sigma(v) \in D(\mc J_{K_{w_1}^{w_2}})$, Lemma 2 of \cite{hirano} and the fact that $G(x,\cdot )$ is convex, we infer that 
\begin{equation}\label{si41}
\begin{aligned}
& \mc J_{K_{w_1}}(v)- \mc J_{K_{w_1}}(u)  \geq  \mc J_{K_{w_1}}(v)- \mc J_{K_{w_1}}(\sigma(v))\\
&  = \frac{\|v-\sigma(v)\|^2}{2} +\ld \sigma(v), v-\sigma(v)\rd + \int_{ \Om}(G(x,v)-G(x,\sigma(v)))~dx \\
& \qquad -\frac{\la}{22^*_\mu} \int_{\Om}\int_{\Om}\frac{ \left( (v+\overline{u})^{2^*_\mu}(v+\overline{u})^{2^*_\mu} - (\sigma(v)+\overline{u})^{2^*_\mu}(\sigma(v)+\overline{u})^{2^*_\mu}\right)}{|x-y|^{\mu}}~dxdy\\
& \geq  \frac{\|v-\sigma(v)\|^2}{2} +\ld \sigma(v),  v-\sigma(v)\rd + \int_{ \Om}g(x,\sigma(v))(v-\sigma(v))~dx \\
& \qquad -\frac{\la}{22^*_\mu} \int_{\Om}\int_{\Om}\frac{ \left( (v+\overline{u})^{2^*_\mu}(v+\overline{u})^{2^*_\mu} - (\sigma(v)+\overline{u})^{2^*_\mu}(\sigma(v)+\overline{u})^{2^*_\mu}\right)}{|x-y|^{\mu}}~dxdy\\
& \geq  \frac{\|v-\sigma(v)\|^2}{2} +\ld w_2,  v-\sigma(v)\rd + \int_{ \Om}g(x,w_2)(v-\sigma(v))~dx \\
& \qquad -\frac{\la}{22^*_\mu} \int_{\Om}\int_{\Om}\frac{ \left( (v+\overline{u})^{2^*_\mu}(v+\overline{u})^{2^*_\mu} - (\sigma(v)+\overline{u})^{2^*_\mu}(\sigma(v)+\overline{u})^{2^*_\mu}\right)}{|x-y|^{\mu}}~dxdy\\
&
\geq  \frac{\|v-\sigma(v)\|^2}{2} + \varXi(v-\sigma(v)) -\frac{\la}{22^*_\mu}  I  
\end{aligned}
\end{equation}
where 
\begin{align*}
I=&  \int_{\Om}\int_{\Om}\frac{(v+\overline{u})^{2^*_\mu}(v+\overline{u})^{2^*_\mu}}{|x-y|^{\mu}}~dxdy    - \int_{\Om}\int_{\Om}\frac{(\sigma(v)+\overline{u})^{2^*_\mu}(\sigma(v)+\overline{u})^{2^*_\mu}}{|x-y|^{\mu}}~dxdy \\ &   \quad - 22^*_\mu \int_{\Om}\int_{\Om}\frac{(\sigma(v)+\overline{u})^{2^*_\mu}(\sigma(v)+\overline{u})^{2^*_\mu-1}(v-\sigma(v))}{|x-y|^{\mu}}~dxdy.
\end{align*}
Now we estimate $I$ from above. First observe that 
\begin{equation}
\begin{aligned}\label{si42}
I& = 2^*_\mu \int_{\Om}  \int_{\sigma(v)}^{v} \left( \int_{\Om}\frac{(v+\overline{u})^{2^*_\mu} + (\sigma(v)+\overline{u})^{2^*_\mu}  }{|x-y|^{\mu}}~dy\right) \left( (t+\overline{u})^{2^*_\mu-1}- (\sigma(v)+\overline{u})^{2^*_\mu-1}\right)~dtdx\\
& \quad+  2^*_\mu \int_{\Om}  \int_{\sigma(v)}^{v} \left( \int_{\Om}\frac{(v+\overline{u})^{2^*_\mu} - (\sigma(v)+\overline{u})^{2^*_\mu} }{|x-y|^{\mu}}~dy\right)  (\sigma(v)+\overline{u})^{2^*_\mu-1} ~dtdx. 
\end{aligned}
\end{equation}
Using the mean value theorem, there exists $\theta\in [0,1]$ such that 
\begin{align*}
\frac{(u+\overline{u})^{2^*_\mu-1}- (v+\overline{u})^{2^*_\mu-1}}{(u-v)}& = (2^*_\mu-1) (u+\overline{u} + \theta(v-u))^{2^*_\mu-2}(u-v) \\
&=  (2^*_\mu-1) (\overline{u} + (1-\theta)u + \theta v)^{2^*_\mu-2}(u-v)\\
& \leq (2^*_\mu-1)2^{2^*_\mu-3} (\overline{u}^{2^*_\mu-2} +( (1-\theta)u + \theta v)^{^{2^*_\mu-2}})(u-v)\\
& \leq (2^*_\mu-1)2^{2^*_\mu-3} (\overline{u}^{2^*_\mu-2} +\max\{u,v  \}^{^{2^*_\mu-2}})(u-v). 
\end{align*}
For each $x \in \Om$ and $v \in D(\mc J_{K_{w_2}})$ define the functions
\begin{align*}
& k_v^1(x) = (2^*_\mu-1)2^{2^*_\mu-3} (\overline{u}^{2^*_\mu-2} +\max\{|w_2|,|v|  \}^{^{2^*_\mu-2}})\chi_{\{v>w_2\}}, \\
& k_v^2(x) = 2^*_\mu2^{2^*_\mu-2} (\overline{u}^{2^*_\mu-1} +\max\{|w_2|,|v|  \}^{^{2^*_\mu-1}}) \chi_{\{v>w_2\}}.
\end{align*}
Using the Hardy-Littlewood-Sobolev inequality, we have 
\begin{equation}\label{si43}
\begin{aligned}
&\int_{\Om}  \int_{\sigma(v)}^{v} \left( \int_{\Om}\frac{(v+\overline{u})^{2^*_\mu} + (\sigma(v)+\overline{u})^{2^*_\mu}  }{|x-y|^{\mu}}~dy\right) \left( (t+\overline{u})^{2^*_\mu-1}- (\sigma(v)+\overline{u})^{2^*_\mu-1}\right)~dtdx\\
& \leq \frac12 \int_{\Om}    \int_{\Om}\frac{((v+\overline{u})^{2^*_\mu} + (\sigma(v)+\overline{u})^{2^*_\mu})   k_v^1(x) (v-\sigma(v))^2 }{|x-y|^{\mu}}~dy dx\\
& \leq c_1 \left( | v+\overline{u}|_{2^*_s}^{2^*_\mu}+ | \sigma(v)+\overline{u}|_{2^*_s}^{2^*_\mu}\right)| k_v^1(x) (v-\sigma(v))^2|_{\frac{2^*_s}{2^*_\mu}}
\end{aligned}
\end{equation}
 for some appropriate  positive constant $c_1$. Similarly with the help of Hardy-Littlewood-Sobolev inequality, H\"older's inequality and the definition of $S$ we have  
\begin{equation}\label{si44}
\begin{aligned}
& \int_{\Om}  \int_{\sigma(v)}^{v} \left( \int_{\Om}\frac{(v+\overline{u})^{2^*_\mu} - (\sigma(v)+\overline{u})^{2^*_\mu} }{|x-y|^{\mu}}~dy\right)  (\sigma(v)+\overline{u})^{2^*_\mu-1} ~dtdx  \\
& \leq c_2 S^{-1/2} | k_v^2(x) (v-\sigma(v))|_{\frac{2^*_s}{2^*_\mu}} | \sigma(v)+\overline{u}|_{2^*_s}^{2^*_\mu-1} \|v-\sigma(v)\|
\end{aligned}
\end{equation}
for some appropriate  positive constant $c_1$. Using \eqref{si42} jointly with \eqref{si43} and \eqref{si44}, we have 
\begin{equation}\label{si45}
\begin{aligned}
I \leq & c_1 \left( | v+\overline{u}|_{2^*_s}^{2^*_\mu}+ | \sigma(v)+\overline{u}|_{2^*_s}^{2^*_\mu}\right)| k_v^1(x) (v-\sigma(v))^2|_{\frac{2^*_s}{2^*_\mu}}\\
& \quad +c_2 S^{-1/2} | k_v^2(x) (v-\sigma(v))|_{\frac{2^*_s}{2^*_\mu}} | \sigma(v)+\overline{u}|_{2^*_s}^{2^*_\mu-1} \|v-\sigma(v)\|. 
\end{aligned}
\end{equation}
Let us suppose that the result is not true. This means that there exists a sequence $\{ v_n \} \subset X_0$ such that for any $v_n \in K_{w_1}$ and 
\begin{align*}
\|v_n- u\| < \frac{1}{2^n},\; \mc J_{K_{w_1}} (v_n) < \mc J_{K_{w_1}} (u) \text{ for all } n. 
\end{align*} 
Define $ l:= u + \sum_{n=1}^{\infty} |v_n - u|$. By definition, $|v_n|\leq l $ a.e for all $n$.  Now  for each $v \in D(\mc J_{K_{w_1}})$, set
\begin{align*}
& \underline{k_v^1}(x) = (2^*_\mu-1)2^{2^*_\mu-3} (\overline{u}^{2^*_\mu-2} +\max\{|w_2|,|l|  \}^{^{2^*_\mu-2}}) \chi_{\{v>w_2\}}\\
&\underline{k_v^2}(x) = 2^*_\mu2^{2^*_\mu-2} (\overline{u}^{2^*_\mu-1} +\max\{|w_2|,|l|  \}^{^{2^*_\mu-1}}) \chi_{\{v>w_2\}}. 
\end{align*}
Employing \eqref{si41} and \eqref{si45}, we deduce that 
\begin{equation}\label{si46}
\begin{aligned}
0& > \mc J_{K_{w_1}}(v_n)- \mc J_{K_{w_1}}(u)  \\
& \geq  \mc J_{K_{w_1}}(v_n)- \mc J_{K_{w_1}}(\sigma(v_n))\\
& \geq  \frac{\|v_n-\sigma(v_n)\|^2}{2}  -\la\left(  c_1 \left( | v_n+\overline{u}|_{2^*_s}^{2^*_\mu}+ | \sigma(v_n)+\overline{u}|_{2^*_s}^{2^*_\mu}\right)| \underline{k_{v_n}^1}(x) (v_n-\sigma(v_n))^2|_{\frac{2^*_s}{2^*_\mu}}\right. \\
& \left.\hspace{1cm}+c_2 S^{-1/2} | \underline{k_{v_n}^2} (x) (v_n-\sigma(v_n))|_{\frac{2^*_s}{2^*_\mu}} | \sigma(v_n)+\overline{u}|_{2^*_s}^{2^*_\mu-1} \|v_n-\sigma(v_n)\|\right) + \varXi(v_n-\sigma(v_n))\\
& \geq  \frac{\|v_n-\sigma(v_n)\|^2}{2} + \varXi(v_n-\sigma(v_n)) -\left(  \frac{C_1}{4} | \underline{k_{v_n}^1}(x) (v_n-\sigma(v_n))^2|_{\frac{2^*_s}{2^*_\mu}}\right. \\
& \left.\hspace{5cm}+\frac{C_2}{4} | \underline{k_{v_n}^2} (x) (v_n-\sigma(v_n))|_{\frac{2^*_s}{2^*_\mu}}  \|v_n-\sigma(v_n)\|\right)
\end{aligned}
\end{equation}
where $C_1 = \sup_{n} 4\la c_1 \left( | v_n+\overline{u}|_{2^*_s}^{2^*_\mu}+ | \sigma(v_n)+\overline{u}|_{2^*_s}^{2^*_\mu}\right)$ and $C_2 = \sup_{n} 4\la c_2 S^{-1/2}| \sigma(v_n)+\overline{u}|_{2^*_s}^{2^*_\mu-1}$. 
Consider 
\begin{align*}
| \underline{k_{v_n}^1}(x)& (v_n-\sigma(v_n))^2|_{\frac{2^*_s}{2^*_\mu}}\leq  | \underline{k_{v_n}^1}(x) |_{\frac{2^*_s}{2^*_\mu-2}} |  (v_n-\sigma(v_n))^2|_{\frac{22^*_s}{2^*_\mu}}^2\\
& = \left( \left(\int_{ \{ \underline{k_{v_n}^1} \leq R_1\}} | \underline{k_{v_n}^1}(x) |^{\frac{2^*_s}{2^*_\mu-2}}\right)^{\frac{2^*_\mu-2}{2^*_s}} +  \left(\int_{ \{ \underline{k_{v_n}^1} >R_1\}} | \underline{k_{v_n}^1}(x) |^{\frac{2^*_s}{2^*_\mu-2}}\right)^{\frac{2^*_\mu-2}{2^*_s}}\right)\\
&  \hspace{8cm}|  (v_n-\sigma(v_n))^2|_{\frac{22^*_s}{2^*_\mu}}^2. 
\end{align*}
Choose $R_1, R_2>0$ such that, for all $n$, 
\begin{align*}
C_1 S^{-1}\left(\int_{ \{ \underline{k_{v_n}^1} >R_1\}} | \underline{k_{v_n}^1}(x) |^{\frac{2^*_s}{2^*_\mu-2}}\right)^{\frac{2^*_\mu-2}{2^*_s}} < \frac12 \text{ and } C_2  S^{-1/2}\left(\int_{ \{ \underline{k_{v_n}^2} >R_2\}} | \underline{k_{v_n}^2}(x) |^{\frac{2^*_s}{2^*_\mu-1}}\right)^{\frac{2^*_\mu-1}{2^*_s}} < \frac12. 
\end{align*}
Therefore, using the H\"older's inequality in \eqref{si46} with above estimates, we have 
\begin{equation*}
\begin{aligned}
0> 
&  \frac{\|v_n-\sigma(v_n)\|^2}{4} + \varXi(v_n-\sigma(v_n)) -\left(  \frac{C_1R_1}{4}  \left( \int_{ \Om} (v_n-\sigma(v_n))^{\frac{22^*_s}{2^*_\mu}}  ~dx \right)^{\frac{2^*_\mu}{2^*_s}} \right. \\
& \left.\hspace{5cm}+\frac{C_2R_2}{4}  \left( \int_{ \Om} (v_n-\sigma(v_n))^{\frac{2^*_s}{2^*_\mu}}  ~dx \right)^{\frac{2^*_\mu}{2^*_s}}   \|v_n-\sigma(v_n)\|\right)\\
& \geq  \frac{\|(v_n-w_2)^+\|^2}{4} + \varXi((v_n-w_2)^+)\\
& \quad  -\left(  \frac{C_1R_1}{4}  |(v_n-w_2)^+|_{\frac{22^*_s}{2^*_\mu}}^2  +\frac{C_2R_2}{4} |(v_n-w_2)^+|_{\frac{22^*_s}{2^*_\mu}}    \|v_n-\sigma(v_n)\|\right).
\end{aligned}
\end{equation*}
Let $C^*= \max \{ \frac{C_1R_1}{2},  \frac{C_2R_2}{2}   \}$.  Thus 
\begin{equation}\label{si47}
\begin{aligned}
0> 
& \frac{\|(v_n-w_2)^+\|^2}{4} + \varXi((v_n-w_2)^+)\\
&\qquad  - \frac{C^*}{2}\left(   |(v_n-w_2)^+|_{\frac{22^*_s}{2^*_\mu}}^2  +|(v_n-w_2)^+|_{\frac{22^*_s}{2^*_\mu}}    \|(v_n-w_2)^+\|\right).
\end{aligned}
\end{equation}
Let $\nu = \inf \{ \varXi(\phi)\; :\;\phi \in \mathcal{A} \}$ where $\mathcal{A}= \{  \phi \in X_0 \; :\;  \phi\geq 0, |\phi|_{\frac{22^*_s}{2^*_\mu}} =1, \|\phi\| \leq 2 C^*  \}$. 
Clearly, $\mathcal{A}$ is a weakly sequentially closed subset of $X_0$. Using  Fatou's lemma and the fact that Riesz potential  is a bounded linear functional, one can easily prove that $\varXi$ is a weakly lower semicontinuous  on $\mathcal{A}$. Hence $\nu >0$. Indeed, let $z_n$ is a minimizing sequence of $\nu$ such that $z_n \rp z $ weakly in $X_0$ as $n \ra \infty$ then $\varXi(z)\leq \liminf \varXi(z_n)$. Now by the application of the fact that $w_2$ is a strict supersolution to $(\widetilde{P_{\la}})$ we get that $\varXi(z) >0$.  Now notice that using the definition of $\nu$, \eqref{si47} can be rewritten as the following 
	\begin{equation}\label{si95}
	\begin{aligned}
	0>&  \nu + \frac14\left( \left( \|(v_n-w_2)^+\| -  C^* |(v_n-w_2)^+|_{\frac{22^*_s}{2^*_\mu}}   \right)^2 - ((C^*)^2+2C^*)|(v_n-w_2)^+|_{\frac{22^*_s}{2^*_\mu}}^2   \right)\\
	& >  \nu -  \frac14 ((C^*)^2+2C^*)|(v_n-w_2)^+|_{\frac{22^*_s}{2^*_\mu}}^2  
	\end{aligned}
	\end{equation}
As $v_n$ is a sequence such that $v_n \ra u $ in $X_0$. 
It implies that as $n \ra \infty$, $|(v_n-w_2)^+|_{\frac{22^*_s}{2^*_\mu}} \ra 0$. 
So from \eqref{si95}, we get a contradiction to the fact that  $\nu>0$. Hence the proof is complete. 
 \QED
\end{proof}

\begin{Lemma}\label{lemsi13}
	$\La >0$. 
\end{Lemma}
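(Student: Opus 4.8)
The plan is to prove that $(P_\la)$ admits a weak solution for every sufficiently small $\la>0$, which gives $\La>0$ at once. I would argue through the translated problem $(\widetilde{P_\la})$ and the sub/supersolution framework already developed. Fix $\la>0$ small enough that $\vartheta$ is a strict supersolution to $(\widetilde{P_\la})$ (Lemma \ref{lemsi12}(ii)); recall that $0$ is a strict subsolution for every $\la>0$ (Lemma \ref{lemsi12}(i)) and that $\vartheta>0$ in $\Om$ because $(-\De)^s\vartheta=1/2>0$ in $\Om$. Consider the order interval $K_0^\vartheta=\{u\in X_0\,:\,0\le u\le\vartheta \text{ a.e. in }\Om\}$ and the restricted functional $\mc J_{K_0^\vartheta}$. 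The strategy is to show that $\mc J_{K_0^\vartheta}$ attains its infimum over $X_0$ and then to invoke Proposition \ref{propsi2}(iii) with $\phi_1=0$ and $\phi_2=\vartheta$.

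First I would check the structural hypotheses of Proposition \ref{propsi2}(iii). Clearly $G(x,0)=0$, and since $\vartheta\in C^s(\R)$ stays bounded below on compact subsets of $\Om$, Lemma \ref{lemsi10}(i) gives $G(\cdot,0),G(\cdot,\vartheta)\in L^1_{\text{loc}}(\Om)$; moreover $G(\cdot,\vartheta)\in L^1(\Om)$, using the elementary bounds $0\le G(x,y)\le y\,g(x,y)\le\min\{y\,\overline{u}^{-q},\,q\,y^2\,\overline{u}^{-q-1}\}$ for $y\ge0$ together with the boundary asymptotics $\vartheta\sim d^s$ and $\overline{u}\sim\phi_q$ (from Proposition \ref{propsi1}), which yield in fact $G(\cdot,\vartheta)\in L^\infty(\Om)$ in each of the regimes $q<1$, $q=1$, $q>1$. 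In particular $\vartheta\in D(\mc J_{K_0^\vartheta})$, so $\mc J_{K_0^\vartheta}$ is proper. For coercivity: if $u\in K_0^\vartheta$ then $G(x,u)\ge0$ (since $g(x,\cdot)\ge0$ on $[0,u(x)]$) and the Choquard term is bounded by $M_\vartheta:=\iint_{\Om\times\Om}\frac{\vartheta^{2^*_\mu}(x)\vartheta^{2^*_\mu}(y)}{|x-y|^\mu}\,dxdy<\infty$ (finite by the Hardy--Littlewood--Sobolev inequality, as $\vartheta\in L^{2^*_s}(\Om)$), whence $\mc J_{K_0^\vartheta}(u)\ge\frac12\|u\|^2-\frac{\la}{22^*_\mu}M_\vartheta$.

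Next I would establish sequential weak lower semicontinuity of $\mc J_{K_0^\vartheta}$: the quadratic term is weakly l.s.c.; the map $u\mapsto\int_\Om G(x,u)\,dx$ is weakly l.s.c. by Fatou along an a.e. convergent subsequence (using $G\ge0$); and on $K_0^\vartheta$ the Choquard functional is weakly \emph{continuous}, because if $u_n\rp u$ in $X_0$ with $u_n\in K_0^\vartheta$ then (up to a subsequence) $u_n\to u$ a.e. while $|u_n|^{2^*_\mu}\le\vartheta^{2^*_\mu}\in L^{2^*_s/2^*_\mu}(\Om)$, so $|u_n|^{2^*_\mu}\to|u|^{2^*_\mu}$ in $L^{2^*_s/2^*_\mu}(\Om)$ by dominated convergence, and the conclusion follows from the bilinearity and boundedness of the Riesz potential form. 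A standard minimizing-sequence argument then produces a minimizer: a minimizing sequence is bounded in $X_0$ by coercivity, a weakly convergent subsequence has limit $u$ in the weakly closed set $K_0^\vartheta$, $\int_\Om G(x,u)\,dx<\infty$ by Fatou, so $u\in D(\mc J_{K_0^\vartheta})$ realizes $\inf\mc J_{K_0^\vartheta}$. Being a global minimizer, $0\in\pa^-\mc J_{K_0^\vartheta}(u)$, so Proposition \ref{propsi2}(iii) shows $u$ is a weak solution of $(\widetilde{P_\la})$; hence $u+\overline{u}$ is a weak solution of $(P_\la)$, and therefore $\La\ge\la>0$.

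I expect the main obstacles to be (i) the verification that $\mc J_{K_0^\vartheta}$ is proper, i.e. $G(\cdot,\vartheta)\in L^1(\Om)$, which requires the precise boundary behaviour of $\vartheta$ and $\overline{u}$ together with the cancellation built into the definition of $G$; and (ii) the weak continuity of the \emph{critical} Choquard term, which is available only because we have confined ourselves to the pointwise-bounded order interval $K_0^\vartheta$ rather than to $K_0$, where the critical exponent would destroy both compactness and coercivity.
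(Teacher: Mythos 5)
Your proposal is correct and follows essentially the same route as the paper: both restrict $\mc J$ to the order interval $\{u\in X_0: 0\le u\le\vartheta\}$ determined by the sub/supersolution pair $(0,\vartheta)$ from Lemma \ref{lemsi12}, establish coercivity and weak lower semicontinuity there (with the critical Choquard term controlled by the pointwise bound $u\le\vartheta$), minimize, and pass through Proposition \ref{propsi2} to obtain a weak solution of $(\widetilde{P_\la})$ for small $\la$. Your verification that $G(\cdot,\vartheta)\in L^1(\Om)$ via the boundary asymptotics of $\vartheta$ and $\overline{u}$ is a detail the paper leaves implicit, but it is consistent with and does not change the argument.
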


\begin{proof}
We will use  the lower and upper solution method to prove the required result.  From Lemma \ref{lemsi12},  $0$ and $\vartheta$  are the sub and supersolution  respectively to  $(\widetilde{P_\la})$.  We define the closed convex set of $X_0$ as 
\begin{align*}
W= \{ u \in X_0\; :\; 0 \leq u \leq \vartheta  \}. 
\end{align*}
 Employing the  definition of $W$, one can easily prove that 
\begin{align*}
\mc J_W\geq \frac{\|u\|^2}{2}- c_1-c_2
\end{align*}
for  appropriate positive constants $c_1$  and $c_2$. It implies $\mc J_W$ is coercive on $W$.  $\mc J_W$ is weakly lower semi continuous on $W$. Indeed, let $\{ u_n\}  \subset W$ such that $u_n \rp u$ weakly in $X_0$ as $n\ra \infty$. For each $n$, 
\begin{align*}
& \int_{ \Om} G(x,u_n)~dx \leq \int_{ \Om} G(x,u)~dx< +\infty,\\
&  \iint_{\Om\times \Om}\frac{(u_n+\overline{u})^{2^*_\mu}(u_n +\overline{u})^{2^*_\mu}}{|x-y|^{\mu}}~dxdy   \leq \iint_{\Om\times \Om}\frac{(\vartheta+\overline{u})^{2^*_\mu}(\vartheta+\overline{u})^{2^*_\mu}}{|x-y|^{\mu}}~dxdy < +\infty.
\end{align*}
Now we may use the dominated convergence theorem  and the weak lower semicontinuity of the norm to prove that  $\mc J_W$ is weakly lower semi continuous on $W$. Thus, there exists $ u \in X_0$  such that 
\begin{align*}
\inf_{v \in W} \mc J_W(v) = \mc J_W(u).
\end{align*}
 Since $0 \in \pa^- \mc J_W(u)$, $u$ is a weak solution to $(\widetilde{P_{\la}})$. It implies  $\La>0$.  \QED
\end{proof}

\begin{Theorem}\label{thmsi3}
	Let $\la \in (0, \La)$. Then there exists a positive weak solution $u_\la$ to $(\widetilde{P_{\la}})$ belonging to $ X_0$ such that $\mc J(u_\la)<0$ and $u_\la$ is a local minimizer for $\mc J_{K_0}$. 
\end{Theorem}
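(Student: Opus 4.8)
The plan is to combine the sub/supersolution method with the nonsmooth minimisation scheme set up above: I would realise $u_\la$ as a minimiser of $\mc J$ over the order interval trapped between the strict subsolution $0$ and a strict supersolution inherited from a larger parameter, and then upgrade it to a local minimiser of $\mc J_{K_0}$ via Theorem \ref{thmsi2}.

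First, since $\la<\La$, by definition of $\La$ I would pick $\la'\in(\la,\La)$ for which $(P_{\la'})$ admits a weak solution; by Lemma \ref{lemsi21} the associated function $w$ is a positive weak solution of $(\widetilde{P_{\la'}})$ lying in $X_0\cap L^\infty(\Om)$, hence by Lemma \ref{lemsi12}(iii) a strict supersolution of $(\widetilde{P_\la})$, while $0$ is a strict subsolution by Lemma \ref{lemsi12}(i) and $0\le w$. Since $\overline{u}$ is bounded away from $0$ on compact subsets of $\Om$ and $w\in L^\infty(\Om)$, one checks $G(\cdot,0)=0$ and $G(\cdot,w)\in L^1_{\text{loc}}(\Om)$, so Proposition \ref{propsi2}(iii) will be applicable.

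Next I would minimise $\mc J$ over the closed (hence weakly closed) convex set $K:=K_0^{w}=\{v\in X_0:0\le v\le w\ \text{a.e. in }\Om\}$. For $v\in K$ one has $0\le v+\overline{u}\le w+\overline{u}$ pointwise with $w+\overline{u}\in L^{2^*_s}(\Om)$, so by the Hardy--Littlewood--Sobolev inequality the nonlocal term of $\mc J$ is bounded uniformly on $K$; together with $G(x,\cdot)\ge 0$ this gives $\mc J_K(v)\ge\frac12\|v\|^2-C$ on $K$, so $\mc J_K$ is coercive, and the same domination (dominated convergence for the nonlocal term), Fatou's lemma for the nonnegative term $\int_\Om G(x,\cdot)$, and weak lower semicontinuity of $\|\cdot\|$ make $\mc J_K$ sequentially weakly lower semicontinuous. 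As $0\in D(\mc J_K)$, the infimum is attained at some $u_\la\in K$ with $G(\cdot,u_\la)\in L^1(\Om)$, and being a minimiser over a convex set it satisfies $0\in\pa^-\mc J_K(u_\la)$; Proposition \ref{propsi2}(iii), applied with subsolution $0$ and supersolution $w$, then shows $u_\la$ is a weak solution of $(\widetilde{P_\la})$, and $u_\la\in X_0$.

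To conclude, since $0\in K$ and $G(x,0)=0$,
\[
\mc J(u_\la)=\inf_K\mc J\le \mc J(0)=-\frac{\la}{2\cdot 2^*_\mu}\iint_{\Om\times\Om}\frac{\overline{u}^{2^*_\mu}\,\overline{u}^{2^*_\mu}}{|x-y|^{\mu}}\,dx\,dy<0,
\]
the integral being finite because $\overline{u}\in L^{2^*_s}(\Om)$ (the nonlocal part of $\mc J$ being evaluated at $0+\overline{u}=\overline{u}$); hence $\mc J(u_\la)<0$. If $u_\la\equiv 0$ then $0\in\pa^-\mc J_K(0)$ and Proposition \ref{propsi2}(iii) would force $0$ to be a weak solution of $(\widetilde{P_\la})$, contradicting the strictness of the subsolution $0$; thus $u_\la\not\equiv 0$, i.e.\ $u_\la$ is a positive weak solution. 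Finally, $0$ is a strict subsolution and $u_\la$ minimises $\mc J_{K_0^{w}}$, so Theorem \ref{thmsi2} with $w_1=0$, $w_2=w$ gives that $u_\la$ is a local minimiser of $\mc J_{K_0}$. The routine parts are the coercivity and weak lower semicontinuity of $\mc J_K$ and the finiteness bookkeeping (nonemptiness of $D(\mc J_K)$ through $0$; $G(\cdot,u_\la)\in L^1(\Om)$, automatic from $\inf_K\mc J>-\infty$); the only genuinely delicate point --- passing from a minimiser over the constrained order interval $K_0^{w}$ to a \emph{local} minimiser over $K_0$ in the $X_0$-topology --- is exactly what Theorem \ref{thmsi2} provides, and since that theorem is already established, the main obstacle of the present proof has in effect been dispatched there.
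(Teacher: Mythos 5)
Your proposal is correct and follows essentially the same route as the paper: take a solution at some $\la'>\la$ as a strict supersolution, minimise $\mc J$ over the order interval $\{0\le v\le w\}$ by coercivity and weak lower semicontinuity (the argument of Lemma \ref{lemsi13}), use Proposition \ref{propsi2}(iii) to identify the minimiser as a weak solution, note $\mc J(u_\la)\le\mc J(0)<0$, and invoke Theorem \ref{thmsi2} with $w_1=0$, $w_2=w$ for the local minimality in $K_0$. The extra remarks you add (the explicit computation of $\mc J(0)$ and the nontriviality of $u_\la$ via strictness of the subsolution) are consistent with, and slightly more detailed than, the paper's proof.
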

\begin{proof}
	Let $\la \in (0,\La)$ and $\la^\prime \in (\la, \La )$. Then by Lemma \ref{lemsi12}, $0$ and $u_{\la^\prime}$ are  strict sub and supersolution to $(\widetilde{P_{\la}})$. The existence of $u_{\la^\prime}$ is clear by the definition of $\La$. Consider the convex set $	W= \{ u \in X_0\; :\; 0 \leq u \leq u_{\la^\prime}  \}$. Using the same analysis as in Lemma \ref{lemsi13}, there exists a  $ u_\la \in X_0$  such that$
	\inf_{v \in W} \mc J_W(v) = \mc J_W(u_\la)$. 
	Notice that $0 \in W$ and $\mc J_W(0)<0$, it implies that $\mc J_W(u_\la)<0$. Let $\phi_1=0$ and $\phi_2 = u_{\la^\prime}$ in Theorem \ref{thmsi2} we have $u_{\la}$ is a local minimizer of $\mc J_{K_0}$. \QED
\end{proof}
\begin{Lemma}
	$\La<\infty$. 
\end{Lemma}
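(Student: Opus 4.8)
The plan is to produce a constant $\La^{*}$, depending only on $N,s,q,\mu,\Om$, such that $(P_\la)$ has no weak solution when $\la>\La^{*}$; since $\La=\sup\{\la>0:(P_\la)\text{ has a weak solution}\}$, this gives $\La\le\La^{*}<\infty$. Let $u$ be a weak solution of $(P_\la)$ and set $w:=u-\overline u$. By Lemma~\ref{lemsi21}, $w\in X_0\cap L^\infty(\Om)$ is a nonnegative weak solution of $(\widetilde{P_\la})$; in particular $u=w+\overline u\in L^\infty(\Om)$ and $u\ge\overline u>0$ a.e.\ in $\Om$. Let $\la_1>0$ be the first eigenvalue of $(-\De)^s$ on $X_0$ and $\phi_1>0$ the corresponding eigenfunction with $\|\phi_1\|_{L^\infty(\Om)}=1$; recall $\phi_1\in X_0\cap C^+_{d^{s}}(\Om)$.

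First I would test the weak formulation of $(\widetilde{P_\la})$ against $\phi_1$. Since $\phi_1\in X_0$, Lemma~\ref{lemsi18} gives $g(\cdot,w)\phi_1\in L^1(\Om)$ together with $\ld w,\phi_1\rd+\int_\Om g(x,w)\phi_1\,dx=\la\iint_{\Om\times\Om}\frac{(w+\overline u)^{2^*_\mu}(x)(w+\overline u)^{2^*_\mu-1}(y)\phi_1(y)}{|x-y|^\mu}\,dxdy$. Using $\ld w,\phi_1\rd=\ld\phi_1,w\rd=\la_1\int_\Om w\phi_1\,dx$, the identity $g(x,w)=\overline u^{-q}-(w+\overline u)^{-q}=\overline u^{-q}-u^{-q}$, and $w+\overline u=u$, this becomes
\begin{align*}
\la_1\int_\Om u\phi_1\,dx=\int_\Om u^{-q}\phi_1\,dx+\la\iint_{\Om\times\Om}\frac{u^{2^*_\mu}(x)\,u^{2^*_\mu-1}(y)\,\phi_1(y)}{|x-y|^{\mu}}\,dxdy+E,
\end{align*}
where $E:=\la_1\int_\Om\overline u\,\phi_1\,dx-\int_\Om\overline u^{-q}\phi_1\,dx$. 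Here $\int_\Om\overline u^{-q}\phi_1\,dx<\infty$ and $\int_\Om u^{-q}\phi_1\,dx<\infty$ because of the sharp boundary behaviour $\overline u\le u$ with $\overline u\in C^+_{\phi_q}(\Om)$ from Proposition~\ref{propsi1} (one checks $\phi_q^{-q}\phi_1\in L^1(\Om)$ for every $q>0$), while the Choquard double integral is finite since $u\in L^\infty(\Om)$ and $\mu<N$; thus $E$ is a finite real number depending only on $N,s,q,\Om$ (and $E=0$ whenever $\overline u\in X_0$, although this is not needed).

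Next I would bound the Choquard integral from below. Discarding the nonnegative term $\int_\Om u^{-q}\phi_1\,dx$ and using $|x-y|\le\text{diam}(\Om)$ on $\Om\times\Om$,
\begin{align*}
\la_1\int_\Om u\phi_1\,dx-E\ \ge\ \frac{\la}{(\text{diam}\,\Om)^{\mu}}\Big(\int_\Om u^{2^*_\mu}\,dx\Big)\Big(\int_\Om u^{2^*_\mu-1}\phi_1\,dx\Big).
\end{align*}
Since $\mu\le 4s$ we have $2^*_\mu-1=\frac{N-\mu+2s}{N-2s}\ge 1$, so $t\mapsto t^{\,2^*_\mu-1}$ is nondecreasing and $u\ge\overline u$ gives $\int_\Om u^{2^*_\mu-1}\phi_1\,dx\ge c_1:=\int_\Om\overline u^{\,2^*_\mu-1}\phi_1\,dx\in(0,\infty)$. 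Also $2^*_\mu\ge 1$, so H\"older's inequality together with $0\le\phi_1\le 1$ yields $\int_\Om u^{2^*_\mu}\,dx\ge|\Om|^{-(2^*_\mu-1)}\big(\int_\Om u\,dx\big)^{2^*_\mu}\ge|\Om|^{-(2^*_\mu-1)}\big(\int_\Om u\phi_1\,dx\big)^{2^*_\mu}$. Writing $a:=\int_\Om u\phi_1\,dx$ and $C:=c_1(\text{diam}\,\Om)^{-\mu}|\Om|^{-(2^*_\mu-1)}>0$, the previous inequalities combine into $\la\,C\,a^{2^*_\mu}\le\la_1\,a+|E|$.

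Finally, $a=\int_\Om u\phi_1\,dx\ge\int_\Om\overline u\,\phi_1\,dx=:a_0>0$, and $a_0$ is independent of $\la$ and of $u$. Since $2^*_\mu-1\ge 1$, both terms on the right of $\la\le \dfrac{\la_1}{C\,a^{\,2^*_\mu-1}}+\dfrac{|E|}{C\,a^{\,2^*_\mu}}$ are nonincreasing in $a$, hence
\begin{align*}
\la\ \le\ \frac{\la_1}{C\,a_0^{\,2^*_\mu-1}}+\frac{|E|}{C\,a_0^{\,2^*_\mu}}\ =:\ \La^{*}<\infty .
\end{align*}
As $\La^{*}$ depends neither on the solution nor on $\la$, this proves $\La\le\La^{*}<\infty$. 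The only points that need a little care are the integrability $\overline u^{-q}\phi_1\in L^1(\Om)$ (immediate from the sharp rate of $\overline u$ near $\partial\Om$ in Proposition~\ref{propsi1}) and the legitimacy of testing against $\phi_1$ when $u\notin X_0$, i.e.\ when $q(2s-1)\ge 2s+1$; the latter is exactly what is circumvented by passing to the translated problem $(\widetilde{P_\la})$, whose solution $w=u-\overline u$ does belong to $X_0$, so that Lemma~\ref{lemsi18} applies verbatim. Everything else is elementary.
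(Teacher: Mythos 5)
Your argument is correct, and it is a genuinely different (and more direct) route than the one in the paper. The paper argues by contradiction: assuming $\La=+\infty$ it takes a sequence $\la_n\to\infty$ of solutions produced by Theorem \ref{thmsi3}, uses the negativity of the energy $\mc J(u_{\la_n})<0$ together with the equation to show that $z_n=\la_n^{-1/2}u_{\la_n}$ is bounded in $X_0$, and then tests with a compactly supported $\psi$ on a set where $\overline u>k$ to force $\ld z_0,\psi\rd=\infty$. You instead test the translated equation directly with the first eigenfunction $\phi_1$, use $\ld w,\phi_1\rd=\la_1\int_\Om w\phi_1$, the pointwise lower bound $u\ge\overline u$, the crude estimate $|x-y|\le\mathrm{diam}\,\Om$ on the Riesz kernel, and H\"older's inequality to obtain $\la\,C\,a^{2^*_\mu}\le\la_1 a+|E|$ with $a=\int_\Om u\phi_1\ge\int_\Om\overline u\,\phi_1>0$. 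This yields an explicit, solution-independent bound $\La^*$ and, unlike the paper's proof, does not rely on the variational construction of the first solution nor on any compactness or normalization argument; it only needs Lemmas \ref{lemsi21} and \ref{lemsi18} and the finiteness of $\int_\Om\overline u^{-q}\phi_1\,dx$, which you correctly flag and which indeed holds for every $q>0$ by the sharp boundary rate $\overline u\in C^+_{\phi_q}$ (for $q>1$ one gets $\overline u^{-q}\phi_1\lesssim d^{\,s(1-q)/(q+1)}$ with exponent $>-1$ since $s<1$). Two small cosmetic points: the monotonicity of $t\mapsto t^{2^*_\mu-1}$ and the monotonicity in $a$ of the final right-hand side only require $2^*_\mu-1>0$, which is automatic, so the hypothesis $\mu\le 4s$ is not actually needed at those steps; and the identity $E=0$ when $\overline u\in X_0$ is a correct but inessential aside. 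Neither affects the validity of the proof.
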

\begin{proof}
Assume by contradiction that $\La= +\infty$. Then there exists a sequence $\la_n \ra \infty$ as $n \ra \infty$. Let $u_{\la_n}$ be the corresponding solution to $(\widetilde{P_{\la}})$. Then by Theorem \ref{thmsi3},  $\mc J(u_{\la_n}) <0$  and $u_{\la_n}$ is a local minimizer for $\mc J_{K_0}$.
That is, 
\begin{equation}\label{si51}
\begin{aligned}
& \frac12 \|u_{\la_n}\|^2 + \int_{\Om}G(x,u_{\la_n})~dx - \frac{\la_n}{22^*_\mu} \iint_{\Om\times \Om}\frac{(u_{\la_n}+\overline{u})^{2^*_\mu}(u_{\la_n}+\overline{u})^{2^*_\mu}}{|x-y|^{\mu}}~dxdy<0\\
\text{ and }&  \|u_{\la_n}\|^2 + \int_{\Om}g(x,u_{\la_n})u_{\la_n}~dx - \la_n \iint_{\Om\times \Om}\frac{(u_{\la_n}+\overline{u})^{2^*_\mu}(u_{\la_n}+\overline{u})^{2^*_\mu-1}u_{\la_n}}{|x-y|^{\mu}}~dxdy=0.
\end{aligned}
\end{equation}
With the application of Lemma \ref{lemsi10}(ii) and statements, we have 
\begin{align}\label{si49}
\frac12 \iint_{\Om\times \Om}\frac{(u_{\la_n}+\overline{u})^{2^*_\mu}(u_{\la_n}+\overline{u})^{2^*_\mu-1}u_{\la_n}}{|x-y|^{\mu}}~dxdy < \frac{1}{22^*_\mu} \iint_{\Om\times \Om}\frac{(u_{\la_n}+\overline{u})^{2^*_\mu}(u_{\la_n}+\overline{u})^{2^*_\mu}}{|x-y|^{\mu}}~dxdy.
\end{align}
Using the fact that $ \overline{u} \in L^\infty(\Om)$, for each $ x \in \Om$,  we have 
\begin{align*}
\lim_{t \ra\infty} \frac{\left( \int_{ \Om} \frac{|t+\overline{u}|^{2^*_\mu}}{|x-y|^{\mu }}~dy\right)|t+\overline{u}|^{2^*_\mu} }{\left( \int_{ \Om} \frac{|t+\overline{u}|^{2^*_\mu}}{|x-y|^{\mu }}~dy\right)|t+\overline{u}|^{2^*_\mu-1}t } =1.
\end{align*}
Therefore, it follows that for any small enough $\e>0$, there exists $M_\e >0$ such  that, for all $n$
\begin{equation}
\begin{aligned}\label{si50}
 \frac{1}{22^*_\mu} \iint_{\Om\times \Om}& \frac{(u_{\la_n}+\overline{u})^{2^*_\mu}(u_{\la_n}+\overline{u})^{2^*_\mu}}{|x-y|^{\mu}}~dxdy\\
 &  < \frac{1}{2+\e}\iint_{\Om\times \Om}\frac{(u_{\la_n}+\overline{u})^{2^*_\mu}(u_{\la_n}+\overline{u})^{2^*_\mu-1}u_{\la_n}}{|x-y|^{\mu}}~dxdy +M_\e . 
\end{aligned}
\end{equation}
From \eqref{si49} and \eqref{si50}, we obtain 
\begin{align*}
\iint_{\Om\times \Om}\frac{(u_{\la_n}+\overline{u})^{2^*_\mu}(u_{\la_n}+\overline{u})^{2^*_\mu-1}u_{\la_n}}{|x-y|^{\mu}}~dxdy <\infty \text{ for all } n .
\end{align*}
From \eqref{si51}, we have 
\begin{align*}
\|u_{\la_n}\|^2 <  \la_n \iint_{\Om\times \Om}\frac{(u_{\la_n}+\overline{u})^{2^*_\mu}(u_{\la_n}+\overline{u})^{2^*_\mu-1}u_{\la_n}}{|x-y|^{\mu}}~dxdy. 
\end{align*}
Hence $\{\la_n^{-1/2}u_{\la_n}\}$ is uniformly bounded in $X_0$. Then there exists $z_0 \in X_0$ such that $z_n:=  \la_n^{-1/2}u_{\la_n} \rp z_0$ weakly in $X_0$. Let $0\leq \psi \in C_c^\infty(\Om)$ be a non trivial function. Let $k>0$ such that $\overline{u}>k$ on $\text{supp}(\psi)$. Once again using \eqref{si51}, we deduce that 
\begin{align*}
\sqrt{\la_n} \iint_{\Om\times \Om}\frac{k^{22^*_\mu-1}\psi}{|x-y|^{\mu}}~dxdy & 
\leq \sqrt{\la_n} \iint_{\Om\times \Om}\frac{(u_{\la_n}+\overline{u})^{2^*_\mu}(u_{\la_n}+\overline{u})^{2^*_\mu-1}\psi}{|x-y|^{\mu}}~dxdy\\
& = \ld z_n, \psi\rd + \frac{1}{\sqrt{\la_n}} \int_{\Om}g(x,u_{\la_n})\psi~dx\\
&\leq  \ld z_n, \psi\rd + \frac{1}{\sqrt{\la_n}} \int_{\Om}k^{-q}\psi~dx. 
\end{align*}
Now passing the limit $ n \ra \infty$, we have $ \ld z_0, \psi\rd = \infty$, which is not true. Hence $\La<\infty$. \QED
\end{proof}

\section{Second solution}
In this section we will prove the existence of second solution to $(\widetilde{P_{\la}})$. Let $u_\la$ denotes the first solution to $(\widetilde{P_{\la}})$ obtained in Theorem \ref{thmsi3}. 
\begin{Proposition}\label{propsi4}
	The functional $\mc J_{K_{u_\la}}$ satisfies the $(CPS)_c$ for each $c$ satisfying 
	\begin{align*}
	c < \mc J_{K_{u_\la}}(u_\la) + \frac12 \left(\frac{N-\mu+2s}{2N-\mu}\right) \left( \frac{S_{H,L}^{\frac{2N-\mu}{N-\mu+2s}}}{\la^{\frac{N-2s}{N-\mu+2s}}}  \right). 
	\end{align*}
\end{Proposition}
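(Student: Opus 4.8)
The plan is to run the concentration–compactness scheme for critical Choquard functionals inside the non‑smooth framework recalled above, adapting the argument of \cite{GMS} (itself modelled on \cite{hirano}) to the doubly nonlocal setting. Let $\{z_n\}\subset D(\mc J_{K_{u_\la}})$ satisfy $\mc J_{K_{u_\la}}(z_n)\to c$ and $(1+\|z_n\|)\vertiii{\pa^-\mc J_{K_{u_\la}}(z_n)}\to 0$, and fix $\al_n\in\pa^-\mc J_{K_{u_\la}}(z_n)$ with $(1+\|z_n\|)\|\al_n\|\to 0$. First I would prove that $\{z_n\}$ is bounded in $X_0$: since $(1+\de)z_n\in K_{u_\la}$ for every $\de>0$ (with $G(\cdot,(1+\de)z_n)\in L^1(\Om)$ by Lemma \ref{lemsi10}(ii)(a)), testing the characterisation of Lemma \ref{lemsi14} with $w=(1+\de)z_n$ and letting $\de\to 0^+$ bounds $\la\iint_{\Om\times\Om}\frac{(z_n+\overline{u})^{2^*_\mu}(z_n+\overline{u})^{2^*_\mu-1}z_n}{|x-y|^{\mu}}$ above by $\|z_n\|^2+\int_\Om g(x,z_n)z_n+o(1)$, where $\int_\Om g(x,z_n)z_n$ is finite because $0\le g(x,z_n)z_n\le 2G(x,z_n)\in L^1(\Om)$ (Lemma \ref{lemsi10}(ii)(c)). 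Combining this with $\mc J_{K_{u_\la}}(z_n)=c+o(1)$, with the inequality $\iint_{\Om\times\Om}\frac{(z_n+\overline{u})^{2^*_\mu}(z_n+\overline{u})^{2^*_\mu}}{|x-y|^{\mu}}\le(1+\e)^2\iint_{\Om\times\Om}\frac{(z_n+\overline{u})^{2^*_\mu}(z_n+\overline{u})^{2^*_\mu-1}z_n}{|x-y|^{\mu}}+M_\e$ proved exactly as in \eqref{si50} (splitting $\Om=\{z_n\le R\}\cup\{z_n>R\}$ and using $\overline{u}\in L^\infty(\Om)$), and with $G(x,z_n)-\tfrac{(1+\e)^2}{2\cdot2^*_\mu}g(x,z_n)z_n\ge 0$ for $\e$ small (again Lemma \ref{lemsi10}(ii)(c)), the positive coefficient $\tfrac12-\tfrac{(1+\e)^2}{2\cdot2^*_\mu}$ of $\|z_n\|^2$ forces boundedness.

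Up to a subsequence, $z_n\rp z$ in $X_0$, $z_n\to z$ in $L^r(\Om)$ for $r<2^*_s$ and a.e.\ in $\Om$, so $z\ge u_\la$; a lower‑semicontinuity argument for the Cerami condition, together with Lemma \ref{lemsi14} and Proposition \ref{propsi2} (with $\phi_1=u_\la$), gives $0\in\pa^-\mc J_{K_{u_\la}}(z)$, so $z$ is a weak solution of $(\widetilde{P_\la})$ and satisfies the identity of Lemma \ref{lemsi18}. Write $w_n:=z_n-z\rp 0$. Since $z_n+\overline{u}=(z+\overline{u})+w_n$ with $z+\overline{u}\in L^{2^*_s}(\Om)$, the nonlocal Brezis–Lieb lemma for the Hardy–Littlewood–Sobolev functional splits $\iint_{\Om\times\Om}\frac{(z_n+\overline{u})^{2^*_\mu}(z_n+\overline{u})^{2^*_\mu}}{|x-y|^{\mu}}$ into the corresponding quantity at $z$, the concentration term $\iint_{\Om\times\Om}\frac{|w_n|^{2^*_\mu}|w_n|^{2^*_\mu}}{|x-y|^{\mu}}$, and lower‑order terms — all carrying a factor $\overline{u}$, hence strictly subcritical because $\overline{u}\in L^\infty(\Om)$ and $(2^*_\mu-1)\tfrac{2N}{2N-\mu}<2^*_s$ (as $N\ge 2s$) — which converge strongly with no loss of mass by Vitali's theorem; likewise $\int_\Om g(x,z_n)z_n\to\int_\Om g(x,z)z$, using $0\le g(x,z_n)\le\overline{u}^{-q}$ and Lemma \ref{lemsi10}. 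Subtracting the limiting equation for $z$ from the $(CPS)$ relation for $z_n$ (tested against competitors in $K_{u_\la}$, in particular $(1+\de)z_n$ and $u_\la$) and using $\|z_n\|^2=\|z\|^2+\|w_n\|^2+o(1)$, one obtains, with $\ell:=\lim_n\|w_n\|^2$,
\begin{equation}\label{bubblepf}
\|w_n\|^2-\la\iint_{\Om\times\Om}\frac{|w_n|^{2^*_\mu}|w_n|^{2^*_\mu}}{|x-y|^{\mu}}~dxdy\longrightarrow 0
\qquad\text{and}\qquad
c=\mc J(z)+\Big(\tfrac12-\tfrac1{2\cdot2^*_\mu}\Big)\ell .
\end{equation}

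From the definition of $S_{H,L}$ and the first relation in \eqref{bubblepf}, $\ell\ge S_{H,L}(\ell/\la)^{1/2^*_\mu}$; hence either $\ell=0$ — in which case $\|w_n\|\to 0$, i.e.\ $z_n\to z$ strongly in $X_0$ and $(CPS)_c$ holds — or $\ell\ge S_{H,L}^{(2N-\mu)/(N-\mu+2s)}\la^{-(N-2s)/(N-\mu+2s)}$, where I have used $2^*_\mu-1=\tfrac{N-\mu+2s}{N-2s}$. In the second case, since $\tfrac12-\tfrac1{2\cdot2^*_\mu}=\tfrac12\cdot\tfrac{N-\mu+2s}{2N-\mu}$ and $\mc J(z)\ge\mc J_{K_{u_\la}}(u_\la)$ — true because $z\ge u_\la$ is a weak solution of $(\widetilde{P_\la})$ and $u_\la$ is the first solution from Theorem \ref{thmsi3} — the second relation in \eqref{bubblepf} forces $c\ge\mc J_{K_{u_\la}}(u_\la)+\tfrac12\big(\tfrac{N-\mu+2s}{2N-\mu}\big)S_{H,L}^{(2N-\mu)/(N-\mu+2s)}\la^{-(N-2s)/(N-\mu+2s)}$, contradicting the hypothesis on $c$. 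Thus only $\ell=0$ is possible, and $(CPS)_c$ is established.

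The step I expect to be most delicate is the Brezis–Lieb analysis of the doubly nonlocal critical term: one must expand $(z_n+\overline{u})^{2^*_\mu}(z_n+\overline{u})^{2^*_\mu}$ in full, isolate the single genuinely critical piece carrying the concentration $\iint|w_n|^{2^*_\mu}|w_n|^{2^*_\mu}|x-y|^{-\mu}$, and verify that every mixed term involving $\overline{u}$ is strictly subcritical and therefore converges without loss of mass — in parallel with the justification that the singular quantities $\int_\Om g(x,z_n)z_n$ and $\int_\Om g(x,z_n)(z_n-u_\la)$ pass to the limit, which is where $\phi_q$, the lower bound $z_n\ge u_\la$, Lemma \ref{lemsi10} and (for the full energy estimate) the restriction on $q$ enter. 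The passage $0\in\pa^-\mc J_{K_{u_\la}}(z)$ from the Cerami condition in the non‑smooth setting, and the comparison $\mc J(z)\ge\mc J_{K_{u_\la}}(u_\la)$, are the remaining points requiring care.
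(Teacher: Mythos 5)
Your skeleton is the same as the paper's: boundedness of the Cerami sequence via the $(2+\e)$--type inequality \eqref{si50} together with Lemma \ref{lemsi10}(ii)(c), extraction of a weak limit, Brezis--Lieb splitting of the critical Choquard term, the lower bound $\ell\ge S_{H,L}^{(2N-\mu)/(N-\mu+2s)}\la^{-(N-2s)/(N-\mu+2s)}$ on a nontrivial concentration level, and the contradiction with the threshold. Your boundedness step (testing Lemma \ref{lemsi14} with $(1+\de)z_n$ rather than $2z_n$ is an immaterial variant) and the derivation of $a^2\le \la b^{2\cdot 2^*_\mu}$ (only the inequality is needed, and it follows from the one-sided convexity estimate as in \eqref{si53}) are sound.

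The genuine gap is the inequality $\mc J(z)\ge \mc J_{K_{u_\la}}(u_\la)$, which carries the entire weight of your final contradiction and which you justify by saying that $z\ge u_\la$ solves $(\widetilde{P_\la})$ and that ``$u_\la$ is the first solution from Theorem \ref{thmsi3}.'' Theorem \ref{thmsi3} only asserts that $u_\la$ is a \emph{local} minimizer of $\mc J_{K_0}$; local minimality does not order the energies of distinct solutions lying above $u_\la$, so as stated this step has no proof. The inequality is true, but establishing it is precisely the nontrivial computation the paper performs in \eqref{si56}--\eqref{si63}: expand $\mc J_{K_{u_\la}}(z_n)-\mc J_{K_{u_\la}}(u_\la)$, absorb the singular part with the convexity inequality $G(x,s)-G(x,t)-\tfrac12\bigl(g(x,s)+g(x,t)\bigr)(s-t)\ge 0$ of Lemma \ref{lemsi10}(ii)(b) combined with the equation for $u_\la$ \eqref{si54} and the subdifferential inequality at $z_n$ tested against $2z_n-u_\la$ \eqref{si55}, and then show that the leftover Choquard contribution $\mc I_1$ is nonnegative by writing differences of powers as integrals of $(t+\overline{u})^{2^*_\mu-1}$ and factoring the result into a product of like-signed differences as in \eqref{si63}. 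Without this (or an equivalent) argument your proof does not close. Note also that the paper's route deliberately sidesteps two further delicate points in yours: it never needs $0\in\pa^-\mc J_{K_{u_\la}}(z)$ for the weak limit, and it never needs the full convergence $\int_\Om G(x,z_n)\to\int_\Om G(x,z)$ required by your energy identity $c=\mc J(z)+\bigl(\tfrac12-\tfrac{1}{2\cdot 2^*_\mu}\bigr)\ell$ --- a convergence that is problematic for large $q$, since $G(x,\cdot)$ is unbounded and $\overline{u}^{-q}$ need not pair integrably with $L^{2^*_s}$ functions; the one-sided estimate $\int_\Om G(x,z_0)\ge \int_\Om G(x,z_n)+\int_\Om g(x,z_n)(z_0-z_n)$ is all that is used, and all that is needed.
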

\begin{proof}
	Let $	c < \mc J_{K_{u_\la}}(u_\la) + \frac12 \left(\frac{N-\mu+2s}{2N-\mu}\right) \left( \frac{S_{H,L}^{\frac{2N-\mu}{N-\mu+2s}}}{\la^{\frac{N-2s}{N-\mu+2s}}}  \right)$. Let $z_n$ be a sequence such that 
\begin{align*}
\mc J_{K_{u_\la}}(z_n) \ra c \text{ and } (1+ \|z_n\|) |||\pa^-\mc J_{K_{u_\la}}(z_n)||| \ra 0 \text{ as } n \ra \infty. 
\end{align*}
It implies there exists $\xi_n \in \pa^-\mc J_{K_{u_\la}}(z_n) $ such that $ \|\xi_n\|= |||\pa^-\mc J_{K_{u_\la}}(z_n)|||$ for every $ n$. From Lemma \ref{lemsi14}, for each $ v \in D(\mc J_{K_{u_\la}}) $ and for each $n$, $g(\cdot, z_n)(v-z_n) \in L^1(\Om)$ and 
\begin{equation}\label{si52}
\begin{aligned}
\ld \xi_n, v-z_n\rd & \leq \ld z_n, v-z_n\rd + \int_{\Om} g(x,z_n)(v-z_n)~dx\\
&  - \la \iint_{\Om\times \Om}\frac{(z_n+\overline{u})^{2^*_\mu}(z_n+\overline{u})^{2^*_\mu-1}(v-z_n)}{|x-y|^{\mu}}~dxdy. 
\end{aligned}
\end{equation} 
Using the fact that $G(\cdot, z_n ) \in L^1(\Om)$  and Lemma \ref{lemsi10}, we obtain that $G(\cdot, 2z_n) \in L^1(\Om)$. So $2z_n \in  D(\mc J_{K_{u_\la}})$, now employing \eqref{si52}, we  get 
\begin{align*}
\ld \xi_n , z_n \rd \leq \|z_n\|^2 + \int_{\Om} g(x,z_n)z_n~dx  - \la \iint_{\Om\times \Om}\frac{(z_n+\overline{u})^{2^*_\mu}(z_n+\overline{u})^{2^*_\mu-1}z_n}{|x-y|^{\mu}}~dxdy. 
\end{align*}
With the help of  Lemma \ref{lemsi10} and \eqref{si50}, for $\e>0$  small enough, 
\begin{equation*}
\begin{aligned}
c+1 &\geq 
\frac12 \|z_n\|^2 + \int_{\Om} G(x,z_n)~dx -\frac{\la}{22^*_\mu} \iint_{\Om\times \Om}\frac{(z_n+\overline{u})^{2^*_\mu}(z_n+\overline{u})^{2^*_\mu}}{|x-y|^{\mu}}~dxdy\\
& \geq \frac12 \|z_n\|^2 + \int_{\Om} G(x,z_n)~dx -\frac{1}{2+\e}\left(\ld \xi_n , z_n \rd - \|z_n\|^2 - \int_{\Om} g(x,z_n)z_n~dx \right)-\la M_\e\\
& \geq \frac12 \|z_n\|^2  -\frac{1}{2+\e}\left(\ld \xi_n , z_n \rd - \|z_n\|^2  \right)-\la M_\e. 
\end{aligned}
\end{equation*} 
It shows that $\{z_n\} $ is a bounded sequence in $X_0$. Hence, up to a subsequence, there exist $z_0 \in X_0$ such that $z_n \rp z_0 $ weakly in $X_0$ as $n \ra \infty$. 
Let $\|z_n- z_0\| \ra a^2 $ and $ \iint_{\Om\times \Om}\frac{(z_n-z_0)^{2^*_\mu}(z_n-z_0)^{2^*_\mu}}{|x-y|^{\mu}}~dxdy \ra b^{22^*_\mu} $ as $n \ra \infty$.  
By the mean value theorem, Brezis-Lieb Lemma (see \cite{ Leib,yang}) and \eqref{si52}, we deduce that 
\begin{align*}
\int_{\Om}G(x,z_0)~dx&  \geq \int_{\Om} G(x,z_n)~dx + \int_{\Om} g(x,z_n)(z_0-z_n)~dx \\
& \geq  \int_{\Om} G(x,z_n)~dx  - \la \iint_{\Om\times \Om}\frac{(z_n+\overline{u})^{2^*_\mu}(z_n+\overline{u})^{2^*_\mu-1}(z_n-z_0)}{|x-y|^{\mu}}~dxdy\\
& \quad -  \ld \xi_n, z_n-z_0\rd + \ld z_n, z_n-z_0\rd \\
& = \int_{\Om} G(x,z_n)~dx  -  \ld \xi_n, z_n-z_0\rd + \ld z_n, z_n-z_0\rd \\
&   - \la \iint_{\Om\times \Om}\frac{\left((z_n-z_0)^{2^*_\mu}(z_n-z_0)^{2^*_\mu} + (z_0+\overline{u})^{2^*_\mu}(z_0+\overline{u})^{2^*_\mu}\right) }{|x-y|^{\mu}}~dxdy\\
& \quad + \la \iint_{\Om\times \Om}\frac{(z_n+\overline{u})^{2^*_\mu}(z_n+\overline{u})^{2^*_\mu-1}(z_0+\overline{u})}{|x-y|^{\mu}}~dxdy.
\end{align*}
Now using the fact that $z_n$ converges to $z_0$ weakly in $X_0$, hence as $n \ra \infty$,  we get 
\begin{align*}
\int_{\Om}G(x,z_0)~dx&  \geq \int_{\Om} G(x,z_0)~dx +a^2 - \la b^{22^*_\mu}. 
\end{align*}
Thus 
\begin{align}\label{si53}
 \la b^{22^*_\mu}\geq a^2 . 
\end{align}
On account of the fact that $u_\la$ is a weak positive solution to $(\widetilde{P_{\la}})$, for each $n$, 
\begin{equation}
\begin{aligned}\label{si54}
0& = \ld u_\la,z_n-u_\la \rd + \int_{\Om} g(x,u_\la)(z_n-u_\la)~dx\\
&   \qquad  +\la  \iint_{\Om\times \Om}\frac{(u_\la+\overline{u})^{2^*_\mu}(u_\la+\overline{u})^{2^*_\mu-1}(z_n-u_\la)}{|x-y|^{\mu}}~dxdy.
\end{aligned}
\end{equation}
In consideration of $G(\cdot, z_n), G(\cdot, 2z_n) \in L^1(\Om)$ and $ u_\la\leq 2z_n-u_\la\leq 2z_n$, we infer that $2z_n-u_\la \in D(\mc J_{K_{u_\la}})$. Testing \eqref{si52} with $2z_n-u_\la$, we obtain that
\begin{equation}\label{si55}
\begin{aligned}
\ld \xi_n, z_n- u_\la \rd & \leq \ld z_n, z_n- u_\la \rd + \int_{\Om} g(x,z_n)(z_n- u_\la)~dx\\
&  - \la \iint_{\Om\times \Om}\frac{(z_n+\overline{u})^{2^*_\mu}(z_n+\overline{u})^{2^*_\mu-1}(z_n- u_\la)}{|x-y|^{\mu}}~dxdy. 
\end{aligned}
\end{equation} 
From Lemma \ref{lemsi10}, \eqref{si54} and \eqref{si55}, we have 
\begin{equation}\label{si56}
\begin{aligned}
&\mc J_{K_{u_\la}}(z_n)- \mc J_{K_{u_\la}}(u_\la)\\
&  = \frac12 \|z_n\|^2 + \int_{\Om}G(x,z_n)~dx - \frac{\la}{22^*_\mu} \iint_{\Om\times \Om}\frac{(z_n+\overline{u})^{2^*_\mu}(z_n+\overline{u})^{2^*_\mu}}{|x-y|^{\mu}}~dxdy \\
& \; -  \frac12 \|u_\la\|^2 - \int_{\Om}G(x,u_\la)~dx + \frac{\la}{22^*_\mu} \iint_{\Om\times \Om}\frac{(u_\la+\overline{u})^{2^*_\mu}(u_\la+\overline{u})^{2^*_\mu}}{|x-y|^{\mu}}~dxdy \\
&\geq  \int_{\Om}\!\!\left(G(x,z_n)- G(x,u_\la) - \frac12 \left(g(x,u_\la) +g(x,z_n)\right)(z_n-u_\la)\right)dx\\
& \; +\frac{\la}{22^*_\mu} \iint_{\Om\times \Om}\frac{(u_\la+\overline{u})^{2^*_\mu}(u_\la+\overline{u})^{2^*_\mu}- (z_n+\overline{u})^{2^*_\mu}(z_n+\overline{u})^{2^*_\mu}}{|x-y|^{\mu}}~dxdy + \frac12 \ld \xi_n, z_n- u_\la \rd \\
&\;  + \frac{\la}{2}  \iint_{\Om\times \Om}\frac{\left( (u_\la+\overline{u})^{2^*_\mu}(u_\la+\overline{u})^{2^*_\mu-1} - (z_n+\overline{u})^{2^*_\mu}(z_n+\overline{u})^{2^*_\mu-1} \right) (z_n-u_\la)}{|x-y|^{\mu}}~dxdy\\
& \geq \frac{\la}{22^*_\mu} \iint_{\Om\times \Om}\frac{(u_\la+\overline{u})^{2^*_\mu}(u_\la+\overline{u})^{2^*_\mu}- (z_n+\overline{u})^{2^*_\mu}(z_n+\overline{u})^{2^*_\mu}}{|x-y|^{\mu}}~dxdy \\
&\;  + \frac{\la}{2}  \iint_{\Om\times \Om}\frac{\left((u_\la+\overline{u})^{2^*_\mu}(u_\la+\overline{u})^{2^*_\mu-1}(z_n-u_\la) - (z_n+\overline{u})^{2^*_\mu}(z_n+\overline{u})^{2^*_\mu-1}( u_\la+\overline{u}) \right)}{|x-y|^{\mu}}~dxdy\\
& \quad  +\frac{\la}{2}  \iint_{\Om\times \Om}\frac{(z_n+\overline{u})^{2^*_\mu}(z_n+\overline{u})^{2^*_\mu}}{|x-y|^{\mu}}~dxdy + \frac12 \ld \xi_n, z_n- u_\la \rd\\
& =: \mc I  + \frac12 \ld \xi_n, z_n- u_\la \rd.
\end{aligned}
\end{equation} 
Using Brezis-Lieb Lemma (See \cite{yang}), we have 
\begin{equation}\label{si57}
\begin{aligned}
\mc I&  =      \la \left(\frac12-  \frac{1}{22^*_\mu} \right) \iint_{\Om\times \Om}\!\!\!\frac{ \left((z_n-z_0)^{2^*_\mu}(z_n-z_0)^{2^*_\mu}\right) + \left( (z_0+ \overline{u})^{2^*_\mu}(z_0+ \overline{u})^{2^*_\mu} \right)}{|x-y|^{\mu}}~dxdy 
\\& 
+  \frac{\la}{2}  \iint_{\Om\times \Om}\frac{\left((u_\la+\overline{u})^{2^*_\mu}(u_\la+\overline{u})^{2^*_\mu-1}(z_n-u_\la)\right) -\left( (z_n+\overline{u})^{2^*_\mu}(z_n+\overline{u})^{2^*_\mu-1}( u_\la+\overline{u})\right)}{|x-y|^{\mu}}~dxdy\\ &\quad  +\frac{\la}{22^*_\mu} \iint_{\Om\times \Om}\frac{(u_\la+\overline{u})^{2^*_\mu}(u_\la+\overline{u})^{2^*_\mu}}{|x-y|^{\mu}}~dxdy    +o(1). 
\end{aligned}
\end{equation}
Observe that by weak convergence of the sequence $\{ z_n \}$, we have 
\begin{equation}\label{si58}
\begin{aligned}
&  \iint_{\Om\times \Om}\!\!\!\frac{(u_\la+\overline{u})^{2^*_\mu}(u_\la+\overline{u})^{2^*_\mu-1}(z_n-z_0)}{|x-y|^{\mu}}~dxdy \ra 0\\
 & \text{ and } \iint_{\Om\times \Om}\frac{\left( (z_n+\overline{u})^{2^*_\mu}(z_n+\overline{u})^{2^*_\mu-1} - (z_0+\overline{u})^{2^*_\mu}(z_0+\overline{u})^{2^*_\mu-1}\right)
 	( u_\la+\overline{u})}{|x-y|^{\mu}}~dxdy \ra 0. 
\end{aligned}
\end{equation}
Taking into account \eqref{si56}, \eqref{si57}, \eqref{si58} and passing the limit as $n \ra \infty$,  we obtain that 
\begin{equation}\label{si59}
\begin{aligned} 
c- \mc J_{K_{u_\la}}(u_\la) & \geq  \frac{\la}{22^*_\mu} \iint_{\Om\times \Om}\frac{(u_\la+\overline{u})^{2^*_\mu}(u_\la+\overline{u})^{2^*_\mu}}{|x-y|^{\mu}}~dxdy +    \la \left(\frac12-  \frac{1}{22^*_\mu} \right) b^{22^*_\mu}
\\& 
\qquad +  \frac{\la}{2}  \iint_{\Om\times \Om}\frac{(u_\la+\overline{u})^{2^*_\mu}(u_\la+\overline{u})^{2^*_\mu-1}(z_0-u_\la)}{|x-y|^{\mu}}~dxdy \\
&  \qquad +  \frac{\la}{2} \iint_{\Om\times \Om}\frac{(z_0+\overline{u})^{2^*_\mu}(z_0+\overline{u})^{2^*_\mu-1}( z_0- u_\la)}{|x-y|^{\mu}}~dxdy\\
&\qquad  -  \frac{\la}{22^*_\mu}  \iint_{\Om\times \Om}\frac{ (z_0+ \overline{u})^{2^*_\mu}(z_0+ \overline{u})^{2^*_\mu}}{|x-y|^{\mu}}~dxdy \\
& \qquad := \mc I_1 (\text{say}) +  \la \left(\frac12-  \frac{1}{22^*_\mu} \right) b^{22^*_\mu}. 
\end{aligned}
\end{equation}
Clearly, we infer
\begin{equation}\label{si60}
\begin{aligned} 
 \mc I_1&  =    \frac{\la}{2}  \iint_{\Om\times \Om}\frac{(u_\la+\overline{u})^{2^*_\mu}\left( (u_\la+\overline{u})^{2^*_\mu-1}+ (z_0+\overline{u})^{2^*_\mu-1}\right) (z_0-u_\la) }{|x-y|^{\mu}}~dxdy \\
 &\qquad   - \frac{\la}{2}  \iint_{\Om\times \Om}\frac{(u_\la+\overline{u})^{2^*_\mu} (z_0+\overline{u})^{2^*_\mu-1}(z_0-u_\la) }{|x-y|^{\mu}}~dxdy
 \\& 
 \qquad +  \frac{\la}{2} \iint_{\Om\times \Om}\frac{(z_0+\overline{u})^{2^*_\mu}\left( (z_0+\overline{u})^{2^*_\mu-1}+ (u_\la+\overline{u})^{2^*_\mu-1}\right) ( z_0- u_\la)}{|x-y|^{\mu}}~dxdy\\
 & \qquad - \frac{\la}{2} \iint_{\Om\times \Om}\frac{(z_0+\overline{u})^{2^*_\mu} (u_\la+\overline{u})^{2^*_\mu-1} ( z_0- u_\la)}{|x-y|^{\mu}}~dxdy\\
 & \qquad + \frac{\la}{22^*_\mu} \iint_{\Om\times \Om}\frac{(u_\la+\overline{u})^{2^*_\mu}\left( (u_\la+\overline{u})^{2^*_\mu}- (z_0+\overline{u})^{2^*_\mu}\right)}{|x-y|^{\mu}}~dxdy
 \\& 
  \qquad  + \frac{\la}{22^*_\mu} \iint_{\Om\times \Om}\frac{(z_0+\overline{u})^{2^*_\mu}\left( (u_\la+\overline{u})^{2^*_\mu}- (z_0+\overline{u})^{2^*_\mu}\right)}{|x-y|^{\mu}}~dxdy.
\end{aligned}
\end{equation}
Since 
\begin{align*}
(u_\la+\overline{u})^{2^*_\mu}- (z_0+\overline{u})^{2^*_\mu} & = -2^*_\mu \int_{u_\la}^{z_0} (t+\overline{u})^{2^*_\mu-1}~dt \\
& \quad 
\geq -2^*_\mu \left( \frac{(u_\la+\overline{u})^{2^*_\mu-1}+ (z_0+\overline{u})^{2^*_\mu-1}}{2} \right) (z_0-u_\la). 
\end{align*}
It implies 
\begin{equation}\label{si61}
\begin{aligned} 
\frac{\la}{22^*_\mu}& \iint_{\Om\times \Om}\frac{(u_\la+\overline{u})^{2^*_\mu}\left( (u_\la+\overline{u})^{2^*_\mu}- (z_0+\overline{u})^{2^*_\mu}\right)}{|x-y|^{\mu}}~dxdy\\
&\geq  -  \frac{\la}{4}  \iint_{\Om\times \Om}\frac{(u_\la+\overline{u})^{2^*_\mu}\left( (u_\la+\overline{u})^{2^*_\mu-1}+ (z_0+\overline{u})^{2^*_\mu-1}\right) (z_0-u_\la) }{|x-y|^{\mu}}~dxdy.  
\end{aligned}
\end{equation}
\begin{equation}\label{si62}
\begin{aligned} 
\text{Similarly, }& \frac{\la}{22^*_\mu}\iint_{\Om\times \Om}\frac{(z_0+\overline{u})^{2^*_\mu}\left( (u_\la+\overline{u})^{2^*_\mu}- (z_0+\overline{u})^{2^*_\mu}\right)}{|x-y|^{\mu}}~dxdy\\
&\geq  -  \frac{\la}{4}  \iint_{\Om\times \Om}\frac{(z_0+\overline{u})^{2^*_\mu}\left( (u_\la+\overline{u})^{2^*_\mu-1}+ (z_0+\overline{u})^{2^*_\mu-1}\right) (z_0-u_\la) }{|x-y|^{\mu}}~dxdy.  
\end{aligned}
\end{equation}
From \eqref{si60}, \eqref{si61} and \eqref{si62}, we deduce that 
\begin{equation}\label{si63}
\begin{aligned} 
\mc I_1&  =    \frac{\la}{4}  \iint_{\Om\times \Om}\frac{(u_\la+\overline{u})^{2^*_\mu}\left( (u_\la+\overline{u})^{2^*_\mu-1}- (z_0+\overline{u})^{2^*_\mu-1}\right) (z_0-u_\la) }{|x-y|^{\mu}}~dxdy \\
&\qquad +  \frac{\la}{4} \iint_{\Om\times \Om}\frac{(z_0+\overline{u})^{2^*_\mu}\left( (z_0+\overline{u})^{2^*_\mu-1}- (u_\la+\overline{u})^{2^*_\mu-1}\right) ( z_0- u_\la)}{|x-y|^{\mu}}~dxdy\\
& =  \frac{\la}{4}  \iint_{\Om\times \Om}\frac{\left((z_0+\overline{u})^{2^*_\mu} -  (u_\la+\overline{u})^{2^*_\mu}\right) \left( (z_0+\overline{u})^{2^*_\mu-1}- (u_\la+\overline{u})^{2^*_\mu-1}\right) ( z_0- u_\la) }{|x-y|^{\mu}}~dxdy \\
& \geq 0. 
\end{aligned}
\end{equation}
Hence from \eqref{si59} and \eqref{si63}, we obtain 
\begin{equation}\label{si64}
\begin{aligned} 
c- \mc J_{K_{u_\la}}(u_\la) & \geq   \la \left(\frac12-  \frac{1}{22^*_\mu} \right) b^{22^*_\mu}. 
\end{aligned}
\end{equation}
Using definition of $S_{H,L}$ and \eqref{si53}, we have $ \la b^{22^*_\mu}\geq a^2$ and $a^2 \geq S_{H,L}b^2$, that is
\begin{align}\label{si65}
b\geq \left( \frac{S_{H,L}}{\la}\right)^{\frac{N-2s}{2(N-\mu+2s)}}. 
\end{align}
Using \eqref{si64} and \eqref{si65}, we get 
\begin{align*}
c- \mc J_{K_{u_\la}}(u_\la) & \geq   \la \left(\frac12-  \frac{1}{22^*_\mu} \right) \left( \frac{S_{H,L}}{\la}\right)^{\frac{2N-\mu}{N-\mu+2s}} =  \frac12 \left(\frac{N-\mu+2s}{2N-\mu}\right) \left( \frac{S_{H,L}^{\frac{2N-\mu}{N-\mu+2s}}}{\la^{\frac{N-2s}{N-\mu+2s}}}  \right). 
\end{align*}
It contradicts the fact that $	c < \mc J_{K_{u_\la}}(u_\la) + \frac12 \left(\frac{N-\mu+2s}{2N-\mu}\right) \left( \frac{S_{H,L}^{\frac{2N-\mu}{N-\mu+2s}}}{\la^{\frac{N-2s}{N-\mu+2s}}}  \right)$. Hence $a=0$. \QED
\end{proof}
Now consider the family of minimizers $\{U_\e\}_{\e>0}$ of $S$ defined as 
\begin{align*}
U_\e = \e^{-\frac{(N-2s)}{2}} S^{\frac{(N-\mu)(2s-N)}{4(N-\mu+2s)}}(C(N,\mu))^{\frac{2s-N}{2(N-\mu+2s)}} u^*(x/\e)
\end{align*}
where $u^*(x)= \overline{u}(x/S^{1/2s}),\; \overline{u} (x)= \frac{\tilde{u}(x)}{|\tilde{u}|_{2^*_s}}$ and $\tilde{u}(x)= a(b^2+|x|^2)^{\frac{-(N-2s)}{2}}$ with $ \al \in \mathbb{R}\setminus\{0\}$ and  $\ba >0$ are fixed constants. Then  from Lemma \ref{lulem13}, for $\e>0, ~U_\e$ satisfies 
\begin{align*}
(-\De)^s u = (|x|^{-\mu}* |u|^{2^*_\mu})|u|^{2^*_\mu-2}u \text{ in } \R. 
\end{align*}

Let $\varrho >0$ such that $B_{4\varrho} \subset \Om$. Now  define $\eta\in C_c^{\infty}(\mathbb{R}^N)$ such that $0\leq \eta\leq 1$ in $\mathbb{R}^N$, $\eta\equiv1$ in $B_\varrho(0)$ and $\eta\equiv 0 $ in $\mathbb{R}^N \setminus B_{2\varrho}(0)$. For each $\e>0 $ and  $x \in \R$, we define $\varPsi_\e = \eta(x) U_\e(x) $. 
\begin{Proposition}\label{propsi3}
	Let $N> 2s,\; 0<\mu<N$ then the following holds:
	\begin{enumerate}
		\item [(i)] $\|\varPsi_\e\|^2 \leq  S_{H,L}^{\frac{2N-\mu}{N-\mu+2s}}+O(\e^{N-2s})$.
		\item [(ii)] $\|\varPsi_\e\|_{NL}^{2.2^*_{\mu}}\leq S_{H,L}^{\frac{2N-\mu}{N-\mu+2s}}+O(\e^N)$.
		\item [(iii)] $\|\varPsi_\e\|_{NL}^{2.2^*_{\mu}}\geq S_{H,L}^{\frac{2N-\mu}{N-\mu+2s}}-O(\e^N)$.
	\end{enumerate}
\end{Proposition}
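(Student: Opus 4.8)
The plan is to deduce all three estimates from the explicit profile of $U_\e$, reducing (i) to the (by now standard) fractional Brezis--Nirenberg truncation estimate and (ii)--(iii) to a direct estimate of the Choquard energy with the cut-off removed. I would begin by recording the consequence of $U_\e$ solving the critical Choquard equation: testing that equation against $U_\e$ gives $[U_\e]^2_{H^s(\R)}=\iint_{\R\times\R}\frac{|U_\e|^{2^*_\mu}(x)|U_\e|^{2^*_\mu}(y)}{|x-y|^{\mu}}\,dxdy$, and since by Lemma \ref{lulem13} the function $U_\e$ is, up to the fixed constants in its definition, an extremal for $S_{H,L}$, this together with the identity $\frac{2^*_\mu}{2^*_\mu-1}=\frac{2N-\mu}{N-\mu+2s}$ yields
\begin{align*}
[U_\e]^2_{H^s(\R)}=\iint_{\R\times\R}\frac{|U_\e|^{2^*_\mu}(x)|U_\e|^{2^*_\mu}(y)}{|x-y|^{\mu}}\,dxdy=S_{H,L}^{\frac{2N-\mu}{N-\mu+2s}}.
\end{align*}
I would also use the scaling $U_\e(x)=\e^{-\frac{N-2s}{2}}U_1(x/\e)$ (fixed constants suppressed) together with the decays $U_1(z)\sim|z|^{-(N-2s)}$, $U_1^{2^*_\mu}(z)\sim|z|^{-(2N-\mu)}$, $U_1^{2^*_s}(z)\sim|z|^{-2N}$ as $|z|\to\infty$. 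Since $\text{supp}\,\eta\subset B_{2\varrho}\subset\Om$, we have $\varPsi_\e=\eta U_\e\in X_0$, and in each item it suffices to control the error made by replacing $U_\e$ by $\eta U_\e$.

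For (i), $\|\varPsi_\e\|^2=[\eta U_\e]^2_{H^s(\R)}$, and after splitting $\eta(x)U_\e(x)-\eta(y)U_\e(y)$ in the usual way the difference $[\eta U_\e]^2_{H^s(\R)}-[U_\e]^2_{H^s(\R)}$ reduces to integrals concentrated on the shell $\{\varrho\le|x|\le2\varrho\}$ where $\eta$ is not locally constant; there the scaling above gives $\int_{\varrho\le|x|\le2\varrho}U_\e^2\,dx=O(\e^{N-2s})$ (the three cases $N>4s$, $N=4s$, $N<4s$ all yielding the same power). This is precisely the computation of \cite{servadei} (carried out for the Choquard bubble in \cite{ts}), so $[\eta U_\e]^2_{H^s(\R)}\le[U_\e]^2_{H^s(\R)}+O(\e^{N-2s})$, which with the normalisation is (i). For (ii), $0\le\eta\le1$ gives $\eta U_\e\le U_\e$ pointwise, hence $\|\varPsi_\e\|_{NL}^{2\cdot 2^*_\mu}=\iint_{\Om\times\Om}\frac{(\eta U_\e)^{2^*_\mu}(x)(\eta U_\e)^{2^*_\mu}(y)}{|x-y|^{\mu}}\,dxdy\le\iint_{\R\times\R}\frac{U_\e^{2^*_\mu}(x)U_\e^{2^*_\mu}(y)}{|x-y|^{\mu}}\,dxdy=S_{H,L}^{\frac{2N-\mu}{N-\mu+2s}}$, which is a fortiori $\le S_{H,L}^{\frac{2N-\mu}{N-\mu+2s}}+O(\e^N)$.

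For (iii), the main point, set $w_\e:=(1-\eta^{2^*_\mu})U_\e^{2^*_\mu}\ge0$, which is supported in $\{|x|\ge\varrho\}$. Using $1-\eta^{2^*_\mu}(x)\eta^{2^*_\mu}(y)\le(1-\eta^{2^*_\mu}(x))+(1-\eta^{2^*_\mu}(y))$ and the symmetry of the kernel,
\begin{align*}
\iint_{\R\times\R}\frac{U_\e^{2^*_\mu}(x)U_\e^{2^*_\mu}(y)}{|x-y|^{\mu}}\,dxdy-\|\varPsi_\e\|_{NL}^{2\cdot 2^*_\mu}\ \le\ 2\int_{\{|x|\ge\varrho\}}w_\e(x)\Big(\int_{\R}\frac{U_\e^{2^*_\mu}(y)}{|x-y|^{\mu}}\,dy\Big)dx.
\end{align*}
By scaling $\int_{\R}\frac{U_\e^{2^*_\mu}(y)}{|x-y|^{\mu}}\,dy=c\,\e^{-\mu/2}\Phi(x/\e)$, where $\Phi:=|\cdot|^{-\mu}\ast U_1^{2^*_\mu}$ is bounded and satisfies $\Phi(z)=O(|z|^{-\mu})$ as $|z|\to\infty$ (here $U_1^{2^*_\mu}\in L^1$ since $2N-\mu>N$); as $|x/\e|\ge\varrho/\e$ on $\text{supp}\,w_\e$, the inner potential is $\le C\e^{\mu/2}|x|^{-\mu}$ there. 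The decay $U_1^{2^*_\mu}(z)\sim|z|^{-(2N-\mu)}$ also gives $\int_{\{|x|\ge\varrho\}}U_\e^{2^*_\mu}\,dx=O(\e^{N-\mu/2})$. Combining, the right-hand side is $\le C\varrho^{-\mu}\e^{\mu/2}\int_{\{|x|\ge\varrho\}}U_\e^{2^*_\mu}\,dx=O(\e^{\mu/2}\cdot\e^{N-\mu/2})=O(\e^N)$, and with the normalisation this is (iii).

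The only genuinely delicate step is (iii): bounding the inner Riesz potential $\int_{\R}U_\e^{2^*_\mu}(y)|x-y|^{-\mu}\,dy$ by its supremum costs a factor $\e^{-\mu/2}$ and only gives $O(\e^{N-\mu})$, which is too weak; one must use that on the shell $\{|x|\ge\varrho\}$ carrying $w_\e$ this potential is in fact small, of order $\e^{\mu/2}|x|^{-\mu}$, because the mass of $U_\e^{2^*_\mu}$ concentrates at the origin at scale $\e$. Everything else is routine and follows the pattern of \cite{servadei,ts}.
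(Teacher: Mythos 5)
Your proposal is correct, but it is worth noting that the paper does not actually prove this proposition: its ``proof'' is a two-line citation, deferring (i) to \cite[Proposition 1]{sevadei1} and (ii)--(iii) to \cite[Proposition 2.8]{GMS1}. You defer (i) to the same standard truncation estimate, which is appropriate, but you supply a genuinely self-contained argument for (ii) and (iii). Part (ii) you observe is trivial (monotonicity $\eta U_\e\le U_\e$ gives the inequality with no error term at all, which is stronger than stated). For (iii) your computation is sound and you correctly isolate the one delicate point: the crude bound $\sup_x\int U_\e^{2^*_\mu}(y)|x-y|^{-\mu}\,dy=O(\e^{-\mu/2})$ only yields $O(\e^{N-\mu})$, and one must instead use that on the annulus $\{|x|\ge\varrho\}$ the Riesz potential has already decayed to $O(\e^{\mu/2}|x|^{-\mu})$. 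The only step you state without proof that deserves a line is $\Phi(z)=O(|z|^{-\mu})$ as $|z|\to\infty$ for $\Phi=|\cdot|^{-\mu}\ast U_1^{2^*_\mu}$; this is true but requires the standard three-region splitting ($|w|<|z|/2$, $|z-w|<|z|/2$, and the rest), using both $U_1^{2^*_\mu}\in L^1$ and its pointwise decay $|w|^{-(2N-\mu)}$ — a naive two-region split can appear to give only $O(|z|^{-(N-\mu)})$ from the tail. With that detail filled in, your scaling bookkeeping ($\e^{\mu/2}\cdot\e^{N-\mu/2}=\e^N$) closes the argument, and your proof is a legitimate, more transparent substitute for the external citation.
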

\begin{proof}
	Using the definition of $\varPsi_\e$ and \cite[Proposition 1]{sevadei1} part $(i)$ follows.  For $(ii)$ and $(iii)$ see \cite[Proposition 2.8]{GMS1}.\QED
\end{proof}
\begin{Lemma}\label{lemsi5}
	\cite{ds} The following holds: 
	\begin{enumerate}
		\item[(i)]If  $\mu<\min\{4s,N\}$ then  for all $ \Theta <1 \nonumber$, 
		\begin{align*}
		\|u_\la+t \varPsi_\e\|_{NL}^{2.2^*_{\mu}}	& 
		\geq \|u_\la\|_{NL}^{2.2^*_{\mu}}+\|\varPsi_\e\|_{NL}^{2.2^*_{\mu}} +\widehat{C} t^{2.2^*_{\mu}-1} \int_{\Om}\int_{ \Om}\frac{(\varPsi_\e(x))^{2^*_{\mu}}(\varPsi_\e(y))^{2^*_{\mu}-1}u_\la(y)}{|x-y|^{\mu}}~dxdy\\& \quad + 2.2^*_{\mu} t \int_{\Om}\int_{ \Om}\frac{(u_\la(x))^{2^*_{\mu}}(u_\la(y))^{2^*_{\mu}-1}\varPsi_\e(y)}{|x-y|^{\mu}}~dxdy- O(\e^{(\frac{2N-\mu}{4}) \Theta}) .
		\end{align*}
		\item [(ii)]There exists a $R_0>0$ such that 
		$
		\int_{\Om}\int_{\Om}\frac{(\varPsi_\e(x))^{2^{*}_{\mu}}(\varPsi_\e(y))^{2^{*}_{\mu}-1}u_\la(y)}{|x-y|^{\mu}}~dxdy\geq \widehat{C}R_0\e^{\frac{N-2s}{2}}$. 
	\end{enumerate}
	
\end{Lemma}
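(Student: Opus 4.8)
Set $p:=2^*_\mu=\frac{2N-\mu}{N-2s}$; the hypothesis $\mu<4s$ forces $p>2$. Recall from Theorem \ref{thmsi3}, Lemma \ref{lemsi15} and Lemma \ref{lemsi2} that $u_\la$ is a \emph{continuous and positive} solution of $(\widetilde{P_\la})$ in $X_0\cap L^\infty(\Om)$, and that $\varPsi_\e=\eta U_\e\in C_c^\infty(\Om)$ with $\varPsi_\e\geq0$ and $\varPsi_\e\equiv U_\e$ on $B_\varrho$.

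\emph{Part (i).} The engine is the elementary inequality: for $p\geq2$ and all $a,b\geq0$,
\[
(a+b)^p\ \geq\ a^p+b^p+p\,a^{p-1}b ,
\]
which holds since $\psi(b):=(a+b)^p-a^p-b^p-p\,a^{p-1}b$ satisfies $\psi(0)=\psi'(0)=0$ and $\psi''(b)=p(p-1)\big((a+b)^{p-2}-b^{p-2}\big)\geq 0$; I shall also use its symmetric version $(a+b)^p\geq a^p+b^p+p\,ab^{p-1}$, the tangent-line estimate $(a+b)^p\geq a^p+p\,a^{p-1}b$ and superadditivity $(a+b)^p\geq a^p+b^p$ (these last two for $p\geq1$). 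Writing $A:=(u_\la+t\varPsi_\e)^p$ and splitting $A(x)A(y)=A(x)u_\la^p(y)+A(x)\big(A(y)-u_\la^p(y)\big)$, I estimate the first summand from below using $A(x)\geq u_\la^p(x)+(t\varPsi_\e)^p(x)+p(t\varPsi_\e)(x)u_\la^{p-1}(x)$, and the second using $A(x)\geq u_\la^p(x)+(t\varPsi_\e)^p(x)$ together with the two bounds $A(y)-u_\la^p(y)\geq p\,u_\la^{p-1}(y)(t\varPsi_\e)(y)$ and $A(y)-u_\la^p(y)\geq(t\varPsi_\e)^p(y)+p(t\varPsi_\e)^{p-1}(y)u_\la(y)$ applied against the two nonnegative factors $u_\la^p(x)$ and $(t\varPsi_\e)^p(x)$. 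Integrating the resulting pointwise lower bound for $A(x)A(y)$ against the symmetric kernel $|x-y|^{-\mu}$ and relabelling $x\leftrightarrow y$ in the appropriate term, the two contributions $p\,u_\la^{p-1}(x)(t\varPsi_\e)(x)u_\la^p(y)$ and $p\,u_\la^p(x)u_\la^{p-1}(y)(t\varPsi_\e)(y)$ merge into the coefficient $2\cdot2^*_\mu$, and one arrives at (i) with $\widehat C=2^*_\mu$ (and with $t^{2\cdot2^*_\mu}\|\varPsi_\e\|_{NL}^{2\cdot2^*_\mu}$ in place of $\|\varPsi_\e\|_{NL}^{2\cdot2^*_\mu}$), the surviving term $\iint_{\Om\times\Om}|x-y|^{-\mu}(t\varPsi_\e)^{2^*_\mu}(x)u_\la^{2^*_\mu}(y)\,dx\,dy$ being nonnegative and discarded. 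If one insists on the normalisation $\|\varPsi_\e\|_{NL}^{2\cdot2^*_\mu}$, or groups the remainder more crudely, the resulting lower-order pieces are integrals of the shape $\iint_{\Om\times\Om}|x-y|^{-\mu}\varPsi_\e^{a}(x)u_\la^{b}(x)\varPsi_\e^{c}(y)u_\la^{d}(y)\,dx\,dy$ with $a+c<2\cdot2^*_\mu$; by the Hardy--Littlewood--Sobolev inequality, H\"older's inequality, the $L^\infty$ bound on $u_\la$ and the concentration rates of $\varPsi_\e$ (Proposition \ref{propsi3}), each of them is $O(\e^{(\frac{2N-\mu}{4})\Theta})$ for any $\Theta<1$, the restriction $\Theta<1$ coming from the borderline integrability of the Talenti profile $u^*$ at the concentration scale when $\mu\nearrow 4s$.

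\emph{Part (ii).} Since $u_\la$ is a positive solution, the strong maximum principle for $(-\De)^s$ gives $u_\la>0$ in $\Om$, so by continuity $u_\la\geq c_0>0$ on the compact set $\overline{B_{2\varrho}}$. Using $\varPsi_\e\equiv U_\e$ on $B_\varrho$ and discarding the rest of the domain,
\[
\iint_{\Om\times\Om}\frac{\varPsi_\e^{2^*_\mu}(x)\varPsi_\e^{2^*_\mu-1}(y)u_\la(y)}{|x-y|^\mu}\,dx\,dy\ \geq\ c_0\iint_{B_\varrho\times B_\varrho}\frac{U_\e^{2^*_\mu}(x)U_\e^{2^*_\mu-1}(y)}{|x-y|^\mu}\,dx\,dy .
\]
Writing $U_\e(x)=\kappa\,\e^{-(N-2s)/2}u^*(x/\e)$ with $\kappa>0$ fixed and substituting $x=\e x'$, $y=\e y'$, the powers of $\e$ combine — using $(N-2s)2^*_\mu=2N-\mu$ — into the factor $\e^{(N-2s)/2}$, so the right-hand side equals $c_0\kappa^{2\cdot2^*_\mu-1}\e^{(N-2s)/2}\iint_{B_{\varrho/\e}\times B_{\varrho/\e}}|x'-y'|^{-\mu}u^*(x')^{2^*_\mu}u^*(y')^{2^*_\mu-1}\,dx'\,dy'$, which for every $\e$ with $\varrho/\e\geq1$ is bounded below by $c_0\kappa^{2\cdot2^*_\mu-1}\e^{(N-2s)/2}\iint_{B_1\times B_1}|x'-y'|^{-\mu}u^*(x')^{2^*_\mu}u^*(y')^{2^*_\mu-1}\,dx'\,dy'$. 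Since the last double integral is a fixed positive number, this is $\geq\widehat C R_0\,\e^{(N-2s)/2}$, which is (ii).

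\emph{Main obstacle.} The algebraic expansion and the scaling in (ii) are routine; the only delicate point is the remainder estimate in (i), where one must follow precisely how the Riesz kernel $|x-y|^{-\mu}$ couples to the concentrating bubble — for $\mu$ close to $4s$ several of the mixed integrals sit at the edge of integrability at scale $\e$, and it is exactly this that produces the suboptimal exponent $\frac{2N-\mu}{4}\Theta$ with $\Theta<1$.
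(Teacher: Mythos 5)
The paper does not prove this lemma at all --- it is imported verbatim from \cite{ds} --- so your argument is being compared against a citation rather than a proof, and on its own terms it is correct. For (i), your one-sided splitting $A(x)A(y)=A(x)u_\la^p(y)+A(x)(A(y)-u_\la^p(y))$ combined with the convexity inequalities $(a+b)^p\geq a^p+b^p+pa^{p-1}b$, $(a+b)^p\geq a^p+b^p+pab^{p-1}$ and $(a+b)^p\geq a^p+pa^{p-1}b$ (all legitimate for $p=2^*_\mu\geq 2$, which is exactly what $\mu\leq 4s$ buys) yields, after symmetrising the kernel, the clean bound with $\widehat C=2^*_\mu$, coefficient $2\cdot 2^*_\mu$ on the linear term, and \emph{no} remainder: the $-O(\e^{(\frac{2N-\mu}{4})\Theta})$ term in the statement is an artifact of the full-expansion-plus-cross-term-estimates route taken in \cite{ds}, and your lower bound is strictly stronger, hence sufficient for the only place the lemma is used (Lemma \ref{lemsi6}). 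You are also right that the statement must carry $t^{2\cdot 2^*_\mu}\|\varPsi_\e\|^{2\cdot 2^*_\mu}_{NL}$ rather than $\|\varPsi_\e\|^{2\cdot 2^*_\mu}_{NL}$: as printed the inequality fails already at $t=0$, and Lemma \ref{lemsi6} indeed uses $\|t\varPsi_\e\|^{2\cdot 2^*_\mu}_{NL}$. For (ii), the change of variables is correct --- the identity $(N-2s)2^*_\mu=2N-\mu$ gives the exponent $2N-\mu-\tfrac{(N-2s)(2\cdot 2^*_\mu-1)}{2}=\tfrac{N-2s}{2}$ exactly --- and the limiting integral over $B_1\times B_1$ is finite and positive since $\mu<N$. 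The one point you should tighten is the lower bound $u_\la\geq c_0>0$ on $\overline{B_{2\varrho}}$: rather than a bare appeal to the strong maximum principle (which for the translated problem $(\widetilde{P_\la})$ requires checking the sign of the right-hand side), it is cleaner and consistent with the paper's framework to invoke that the solution of $(P_\la)$ lies in $C^+_{\phi_q}(\Om)$ (Lemma \ref{lemsi2}), equivalently that $u_\la+\overline{u}\geq\overline{u}$ with $\overline{u}$ bounded below by a positive constant on compact subsets of $\Om$. This is a minor repair, not a gap.
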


\begin{Lemma}\label{lemsi6}
	$\sup\{  \mc J_{K_{u_\la}} (u_\la+ t \varPsi_\e): t\geq 0 \}< \mc J_{K_{u_\la}}(u_\la)+ \frac12\left( \frac{N-\mu +2s}{2N-\mu}\right) \frac{S_{H,L}^{\frac{2N-\mu}{N-\mu+2s}}}{\la^{\frac{N-2s}{N-\mu+2s}}}$ for any sufficiently small $\e>0$. 
\end{Lemma}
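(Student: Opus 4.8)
Since $\varPsi_\e=\eta U_\e\ge 0$, every competitor $u_\la+t\varPsi_\e$ with $t\ge 0$ lies in $K_{u_\la}$, and as $\varPsi_\e$ is supported in $B_{2\varrho}$ with $\overline{B_{2\varrho}}\subset\Om$ — where $\overline u$ and the continuous function $u_\la$ are bounded below by a positive constant — we have $u_\la+t\varPsi_\e+\overline u>0$ there and $G(\cdot,u_\la+t\varPsi_\e)\in L^1(\Om)$, whence $\mc J_{K_{u_\la}}(u_\la+t\varPsi_\e)=\mc J(u_\la+t\varPsi_\e)$. So the assertion reduces to bounding $\sup_{t\ge 0}\big(\mc J(u_\la+t\varPsi_\e)-\mc J(u_\la)\big)$. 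Using that $\|\varPsi_\e\|_{NL}^{2\cdot 2^*_\mu}$ is bounded below (Proposition \ref{propsi3}(iii)) and $\|\varPsi_\e\|^2$ is bounded above (Proposition \ref{propsi3}(i)), the map $t\mapsto\mc J(u_\la+t\varPsi_\e)-\mc J(u_\la)$ is negative for $t\ge c_2$ with $c_2$ independent of $\e$, so the supremum is attained at some $t_\e\in[0,c_2]$. Next, since $u_\la$ is a weak solution of $(\widetilde{P_\la})$, Lemma \ref{lemsi18} applied with the test function $\varPsi_\e$ cancels the first variation and gives, writing $v_\la=u_\la+\overline u$,
\[
\mc J(u_\la+t\varPsi_\e)-\mc J(u_\la)=\tfrac{t^2}{2}\|\varPsi_\e\|^2+E_1(t)-\tfrac{\la}{2\cdot 2^*_\mu}E_2(t),
\]
where $E_1(t)=\int_\Om\big(G(x,u_\la+t\varPsi_\e)-G(x,u_\la)-t\,g(x,u_\la)\varPsi_\e\big)dx\ge 0$ by convexity of $s\mapsto G(x,s)$, and $E_2(t)=\|v_\la+t\varPsi_\e\|_{NL}^{2\cdot 2^*_\mu}-\|v_\la\|_{NL}^{2\cdot 2^*_\mu}-2\cdot 2^*_\mu\,t\iint_{\Om\times\Om}\frac{v_\la^{2^*_\mu}v_\la^{2^*_\mu-1}\varPsi_\e}{|x-y|^\mu}\,dxdy$.

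Now I would bound $E_2$ from below by Lemma \ref{lemsi5}(i) (which applies with $v_\la$ in place of $u_\la$, only positivity and $L^{2^*_s}$-integrability of the lower term being used) together with Lemma \ref{lemsi5}(ii) (and $v_\la\ge u_\la$): for every $\Theta<1$,
\[
E_2(t)\ \ge\ t^{2\cdot 2^*_\mu}\|\varPsi_\e\|_{NL}^{2\cdot 2^*_\mu}+c_0\,t^{2\cdot 2^*_\mu-1}\e^{\frac{N-2s}{2}}-O\big(\e^{\frac{2N-\mu}{4}\Theta}\big).
\]
Substituting, $\mc J(u_\la+t\varPsi_\e)-\mc J(u_\la)\le \tfrac{t^2}{2}\|\varPsi_\e\|^2-\tfrac{\la}{2\cdot 2^*_\mu}t^{2\cdot 2^*_\mu}\|\varPsi_\e\|_{NL}^{2\cdot 2^*_\mu}+E_1(t)-c_0'\,t^{2\cdot 2^*_\mu-1}\e^{\frac{N-2s}{2}}+O\big(\e^{\frac{2N-\mu}{4}\Theta}\big)$ for a constant $c_0'>0$. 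The elementary maximization of $t\mapsto\tfrac{t^2}{2}\|\varPsi_\e\|^2-\tfrac{\la}{2\cdot 2^*_\mu}t^{2\cdot 2^*_\mu}\|\varPsi_\e\|_{NL}^{2\cdot 2^*_\mu}$ yields $\tfrac12\big(\tfrac{N-\mu+2s}{2N-\mu}\big)\big(\tfrac{\|\varPsi_\e\|^2}{\|\varPsi_\e\|_{NL}^2}\big)^{\frac{2N-\mu}{N-\mu+2s}}\la^{-\frac{N-2s}{N-\mu+2s}}$, and Proposition \ref{propsi3}(i),(iii) bound $\|\varPsi_\e\|^2/\|\varPsi_\e\|_{NL}^2\le S_{H,L}+O(\e^{N-2s})$, so this is at most $\tfrac12\big(\tfrac{N-\mu+2s}{2N-\mu}\big)\dfrac{S_{H,L}^{(2N-\mu)/(N-\mu+2s)}}{\la^{(N-2s)/(N-\mu+2s)}}+O(\e^{N-2s})$, i.e. the target constant plus an error.

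It remains to show the residual terms are strictly negative for $\e$ small; \textbf{this is the main obstacle, and it hinges on a sharp bound for $E_1$.} Split $[0,c_2]=[0,\delta]\cup[\delta,c_2]$ for a small fixed $\delta>0$. On $[0,\delta]$ one has $\mc J(u_\la+t\varPsi_\e)-\mc J(u_\la)\le\tfrac{\delta^2}{2}\|\varPsi_\e\|^2+E_1(\delta)$; since $\la\in(0,\La)$ we may fix $\delta$ (independently of $\e$) so that $\tfrac{\delta^2}{2}\|\varPsi_\e\|^2$ is below half the target constant, while $E_1(\delta)\to 0$ as $\e\to 0$ by the bound below, and the increment stays below the threshold there. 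On $[\delta,c_2]$ the gain is $\ge c_0'\delta^{2\cdot 2^*_\mu-1}\e^{(N-2s)/2}$, so it suffices that every other error be $o(\e^{(N-2s)/2})$: this is clear for $O(\e^{N-2s})$ since $s>0$, and for $O(\e^{\frac{2N-\mu}{4}\Theta})$ upon taking $\Theta$ close enough to $1$, because $\mu<4s$ forces $\tfrac{2N-\mu}{4}>\tfrac{N-2s}{2}$. The delicate point is $E_1(t)$: the naive estimate $E_1(t)\le\tfrac{q\,t^2}{2}\int_\Om(u_\la+\overline u)^{-q-1}\varPsi_\e^2\,dx$ only gives $O(\e^{\min\{2s,\,N-2s\}})$ (with a logarithm if $N=4s$), which need not be $o(\e^{(N-2s)/2})$. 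To do better I would localize at the concentration point $0$, splitting $\int_\Om=\int_{B_{\sqrt\e}(0)}+\int_{\Om\setminus B_{\sqrt\e}(0)}$. On $B_{\sqrt\e}(0)$, monotonicity and boundedness of $g(x,\cdot)$ there give the pointwise bound $0\le G(x,u_\la+t\varPsi_\e)-G(x,u_\la)-t\,g(x,u_\la)\varPsi_\e\le C\,t\,\varPsi_\e$, and $\int_{B_{\sqrt\e}(0)}\varPsi_\e\,dx=O(\e^{N/2})$; on $\Om\setminus B_{\sqrt\e}(0)$, $\varPsi_\e$ is uniformly bounded and concavity of $g(x,\cdot)$ gives the integrand $\le C\,t^2\varPsi_\e^2$ with $\int_{\Om\setminus B_{\sqrt\e}(0)}\varPsi_\e^2\,dx=O(\e^{N/2})$ when $N\ge 4s$ (and $O(\e^{N-2s})$ when $N<4s$), by the standard tail estimate for the rescaled bubble $U_\e$. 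Hence $E_1(t)=o(\e^{(N-2s)/2})$ uniformly on $[\delta,c_2]$, and combining these estimates the residual on $[\delta,c_2]$ equals $(-c_0'\delta^{2\cdot 2^*_\mu-1}+o(1))\e^{(N-2s)/2}<0$ for all sufficiently small $\e$, which together with the bound on $[0,\delta]$ proves the claim.
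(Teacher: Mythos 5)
Your argument is correct and follows the same architecture as the paper's proof: use the weak-solution identity for $u_\la$ (Lemma \ref{lemsi18}) to cancel the first variation, invoke Lemma \ref{lemsi5} and Proposition \ref{propsi3} to reduce to the model function $\frac{t^2}{2}\|\varPsi_\e\|^2-\frac{\la t^{2\cdot 2^*_\mu}}{2\cdot 2^*_\mu}\|\varPsi_\e\|_{NL}^{2\cdot 2^*_\mu}$ whose maximum is the target constant up to $O(\e^{N-2s})$, and absorb all residuals into the strictly negative gain $-c\,t^{2\cdot 2^*_\mu-1}\e^{(N-2s)/2}$ coming from Lemma \ref{lemsi5}(ii), using $\mu<4s$ to make the $O(\e^{\frac{2N-\mu}{4}\Theta})$ error subordinate. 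You correctly identify that the nonlocal term in $\mc J$ involves $u_\la+\overline u$ and that Lemma \ref{lemsi5} must be read accordingly; your computation of the maximum value and of the exponents is accurate.

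The one place where you genuinely diverge is the estimate of the singular remainder $E_1$. The paper uses an interpolation bound $G(x,r+\sigma)-G(x,r)-g(x,r)\sigma\le R_2\sigma^{\rho}$ with $1<\rho<2$ together with $\int_\Om|\varPsi_\e|^{\rho}\,dx=O(\e^{(N-2s)\rho/2})$, which gives $E_1=o(\e^{(N-2s)/2})$ in one line (though the stated admissible range of $\rho$ there is written in a form that is hard to satisfy for larger $N-2s$, and the $L^\rho$ bound on the bubble in fact only requires $\rho<N/(N-2s)$). Your localization at the concentration point — a linear bound $C t\varPsi_\e$ on $B_{\sqrt{\e}}$ where $\int\varPsi_\e=O(\e^{N/2})$, a quadratic bound $Ct^2\varPsi_\e^2$ on the complement where $\varPsi_\e$ is uniformly bounded — reaches the same conclusion $E_1=o(\e^{(N-2s)/2})$ in all cases $N\gtrless 4s$, and does so without needing the $L^\rho$ interpolation; it is arguably more robust. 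Your explicit split of $[0,c_2]$ into $[0,\delta]\cup[\delta,c_2]$ (using $E_2\ge 0$ and monotonicity of $E_1$ on the small-$t$ range, and the $\e^{(N-2s)/2}$ gain on the large-$t$ range) replaces the paper's appeal to \cite{ds} for the location $T_1\le t_\e\le T_2$ of the maximizer; both are fine, but your version makes the absorption of the error terms self-contained. No gaps.
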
 
\begin{proof}
	Employing  the fact that $u_\la$ is a weak solution to $(P_\la)$  and using Lemma \ref{lemsi5}, for all $\Theta<1$, we have 
	\begin{align*}
	\mc J_{K_{u_\la}}(u_\la+ t\varPsi_\e) - \mc J_{K_{u_\la}}(u_\la)&  \leq  \frac12\|t\varPsi_\e\|^2- \frac{\la}{22^*_\mu} \|t \varPsi_\e\|_{NL}^{2.2^*_{\mu}}+ O(\e^{(\frac{2N-\mu}{4}) \Theta}) \\&\quad +  \int_{ \Om}(G(u_\la+t\varPsi_\e)-G(x,u_\la) - g(x,u_\la)t\varPsi_\e)~dx  
\\&\quad	- \frac{\la\widehat{C} t^{2.2^*_{\mu}-1}}{22^*_\mu} \int_{\Om}\int_{ \Om}\frac{(\varPsi_\e(x))^{2^*_{\mu}}(\varPsi_\e(y))^{2^*_{\mu}-1}u_\la(y)}{|x-y|^{\mu}}~dxdy . 
	\end{align*}
	From Proposition \ref{propsi3} and Lemma \ref{lemsi5}, we deduce  that  
	\begin{equation}\label{si27}
	\begin{aligned}
	\mc J_{K_{u_\la}}(u_\la+ t\varPsi_\e) - \mc J_{K_{u_\la}}(u_\la)&  \leq  \frac{t^2}{2}(S_{H,L}^{\frac{2N-\mu}{N-\mu+2s}}+O(\e^{N-2s}))- \frac{\la t^{22^*_\mu}}{22^*_\mu} (S_{H,L}^{\frac{2N-\mu}{N-\mu+2s}}-O(\e^N)) 
	\\&\quad +  \int_{ \Om}(G(u_\la+t\varPsi_\e)-G(x,u_\la) - g(x,u_\la)t\varPsi_\e)~dx 
	\\&\quad - \frac{\widehat{C} t^{2.2^*_{\mu}-1}}{22^*_\mu} \widehat{C}R_0\e^{\frac{N-2s}{2}}   + O(\e^{(\frac{2N-\mu}{4}) \Theta}). 
	\end{aligned}
	\end{equation}
	Observe that for any fix $1<\rho < \min \{2, \frac{2}{n-2s}  \}$, there exists $R_1 >0$ such that 
	\begin{align*}
	\int_{ \Om}|\varPsi_\e|^\rho ~dx \leq R_1 \e^{(n-2s)\rho/2}. 
	\end{align*}
	Moreover, there exists $ R_2 >0 $ such that,  for all  $x \in \Om, r>m\text{ and } s \geq 0$, 
	\begin{align*}
	G(x,r+s)- G(x,s)-g(x,r)s = \int_r^{r+s} (\tau^{-q}- r^{-q})~d\tau \leq R_2 s^{\rho} . 
	\end{align*}
	Using  last inequality and \eqref{si27} with $\Theta= \frac{2}{2^*_\mu}$, we obtain 
	\begin{equation*}\label{si28}
	\begin{aligned}
	\mc J_{K_{u_\la}}(u_\la+ t\varPsi_\e) - \mc J_{K_{u_\la}}(u_\la)&  \leq  \frac{t^2}{2}(S_{H,L}^{\frac{2N-\mu}{N-\mu+2s}}+O(\e^{N-2s}))- \frac{t^{22^*_\mu}}{22^*_\mu} (S_{H,L}^{\frac{2N-\mu}{N-\mu+2s}}-O(\e^N)) \\&\quad  - \frac{\widehat{C} t^{2.2^*_{\mu}-1}}{22^*_\mu} \widehat{C}R_0\e^{\frac{N-2s}{2}}  + R_1R_2t^\rho \e^{(n-2s)\rho/2} + o(\e^{\frac{N-2s}{2}})\\
	& := K(t). 
	\end{aligned}
	\end{equation*}
	Clearly, one can check that $K(t)\ra -\infty, K(t)>0$ as $t \ra 0^+$ and there exists $t_\e >0$ such that $K^\prime(t_\e)=0$. Furthermore, there exist  positive constants $T_1$ and $T_2$ such that $T_1\leq t_\e \leq T_2$ (for details see \cite{ds}). Hence,
	\begin{align*}
	K(t)& \leq \frac{t_\e^2}{2}(S_{H,L}^{\frac{2N-\mu}{N-\mu+2s}}+O(\e^{N-2s}))- \frac{t_\e^{22^*_\mu}}{22^*_\mu} (S_{H,L}^{\frac{2N-\mu}{N-\mu+2s}}-O(\e^N))  - \frac{\widehat{C} T_1^{2.2^*_{\mu}-1}}{22^*_\mu} \widehat{C}R_0\e^{\frac{N-2s}{2}}\\&\quad  + R_1R_2T_2^\rho \e^{(n-2s)\rho/2} + o(\e^{\frac{N-2s}{2}})\\
	& \leq \sup_{t\geq 0} K_1(t) - \frac{\widehat{C} T_1^{2.2^*_{\mu}-1}}{22^*_\mu} \widehat{C}R_0\e^{\frac{N-2s}{2}} + R_1R_2T_2^\rho \e^{(n-2s)\rho/2} + o(\e^{\frac{N-2s}{2}})
	\end{align*}
	where $K_1(t)=\frac{t^2}{2}(S_{H,L}^{\frac{2N-\mu}{N-\mu+2s}}+O(\e^{N-2s}))- \frac{t^{22^*_\mu}}{22^*_\mu} (S_{H,L}^{\frac{2N-\mu}{N-\mu+2s}}-O(\e^N))$. By trivial computations, we get
	\begin{align*}
	\mc J_{K_{u_\la}}(u_\la+ t\varPsi_\e) - \mc J_{K_{u_\la}}(u_\la)&  \leq  \frac12\left( \frac{N-\mu +2s}{2N-\mu}\right) \frac{S_{H,L}^{\frac{2N-\mu}{N-\mu+2s}}}{\la^{\frac{N-2s}{N-\mu+2s}}} +O(\e^{\frac{N-2s}{2}}) -C\e^{\frac{N-2s}{2}} + o(\e^{\frac{N-2s}{2}})
	\end{align*}
	for an appropriate constant $C>0$.  Thus, for $\e$ sufficiently small,  
	\begin{align*}
	\mc J_{K_{u_\la}}(u_\la+ t\varPsi_\e) - \mc J_{K_{u_\la}}(u_\la)&  <   \frac12\left( \frac{N-\mu +2s}{2N-\mu}\right) \frac{S_{H,L}^{\frac{2N-\mu}{N-\mu+2s}}}{\la^{\frac{N-2s}{N-\mu+2s}}} . 
	\end{align*}
	Hence the proof follows. \QED
\end{proof}
\begin{Proposition}\label{propsi5}
	For each $\la \in (0, \La)$ there exist a second positive solution to $(\widetilde{P_{\la}})$. 
\end{Proposition}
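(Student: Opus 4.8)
The plan is to produce the second solution as a mountain--pass type critical point of the constrained functional $\mc J_{K_{u_\la}}$, using the nonsmooth linking Theorem~\ref{thmsi4}, the energy estimate of Lemma~\ref{lemsi6}, and the compactness information of Proposition~\ref{propsi4}. By Theorem~\ref{thmsi3}, $u_\la\in D(\mc J_{K_{u_\la}})$ is a local minimiser of $\mc J_{K_0}$ for the $X_0$-topology; since $K_{u_\la}\subset K_0$ and $\mc J$ coincides with $\mc J_{K_0}$, respectively $\mc J_{K_{u_\la}}$, on these sets, $u_\la$ is also a local minimiser of $\mc J_{K_{u_\la}}$, so there is $\rho_0>0$ with $\mc J_{K_{u_\la}}(v)\ge\mc J_{K_{u_\la}}(u_\la)$ whenever $v\in K_{u_\la}$ and $\|v-u_\la\|\le\rho_0$.

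Fix $\rho\in(0,\rho_0)$. Since $\varPsi_\e\ge 0$ is compactly supported in $\Om$, the ray $t\mapsto u_\la+t\varPsi_\e$ stays in $K_{u_\la}$ with $G(\cdot,u_\la+t\varPsi_\e)\in L^1(\Om)$, and along it the Choquard term grows like $t^{2\cdot 2^*_\mu}$ while $\frac12\|u_\la+t\varPsi_\e\|^2+\int_\Om G(x,u_\la+t\varPsi_\e)\,dx=O(t^2)$ (using $\rho<2$ in the estimate $G(x,r+s)-G(x,r)-g(x,r)s\le R_2s^{\rho}$ from the proof of Lemma~\ref{lemsi6}); hence $\mc J_{K_{u_\la}}(u_\la+t\varPsi_\e)\to-\infty$. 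Pick $T>0$ with $\|T\varPsi_\e\|>\rho$ and $\mc J_{K_{u_\la}}(u_\la+T\varPsi_\e)<\mc J_{K_{u_\la}}(u_\la)$, and apply Theorem~\ref{thmsi4} with $H=X_0$, $J=\mc J_{K_{u_\la}}$, $N=1$, $\varphi(-1)=u_\la$, $\varphi(1)=u_\la+T\varPsi_\e$, and $A=\{v\in D(\mc J_{K_{u_\la}}):\|v-u_\la\|=\rho\}$. Here $\Sigma\ne\emptyset$ because the segment $r\mapsto u_\la+\frac{r+1}{2}T\varPsi_\e$ lies in $D(\mc J_{K_{u_\la}})$; every $\psi\in\Sigma$ takes values in $D(\mc J_{K_{u_\la}})\subset K_{u_\la}$ and joins a point at distance $0$ to one at distance $>\rho$ from $u_\la$, so it meets $A$ by the intermediate value theorem applied to $r\mapsto\|\psi(r)-u_\la\|$; and $A$ is relatively closed in $D(\mc J_{K_{u_\la}})$, $A\cap\varphi(S^{0})=\emptyset$, while $\inf\mc J_{K_{u_\la}}(A)\ge\mc J_{K_{u_\la}}(u_\la)=\sup\mc J_{K_{u_\la}}(\varphi(S^{0}))$ by local minimality. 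For $c=\inf_{\psi\in\Sigma}\sup_{r\in B^{1}}\mc J_{K_{u_\la}}(\psi(r))$ one gets $c\ge\inf\mc J_{K_{u_\la}}(A)\ge\mc J_{K_{u_\la}}(u_\la)>-\infty$ and, testing on the segment and invoking Lemma~\ref{lemsi6}, $c\le\sup_{t\ge 0}\mc J_{K_{u_\la}}(u_\la+t\varPsi_\e)<\mc J_{K_{u_\la}}(u_\la)+\frac12\left(\frac{N-\mu+2s}{2N-\mu}\right)\left(\frac{S_{H,L}^{\frac{2N-\mu}{N-\mu+2s}}}{\la^{\frac{N-2s}{N-\mu+2s}}}\right)$; so $c$ is finite and $\mc J_{K_{u_\la}}$ satisfies $(\text{CPS})_c$ by Proposition~\ref{propsi4}.

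Theorem~\ref{thmsi4} then gives $u\in D(\mc J_{K_{u_\la}})$ with $\mc J_{K_{u_\la}}(u)=c$ and $0\in\pa^{-}\mc J_{K_{u_\la}}(u)$. Since $u_\la$ is a subsolution to $(\widetilde{P_\la})$ and $G(\cdot,w)\in L^1_{\text{loc}}(\Om)$ for every $w\in K_{u_\la}$ by Lemma~\ref{lemsi10}, Proposition~\ref{propsi2}(i) shows that $u$ is a weak solution of $(\widetilde{P_\la})$ with $u\ge u_\la>0$. It remains to see that $u\ne u_\la$: if $c>\mc J_{K_{u_\la}}(u_\la)$ this is immediate; and if $c=\mc J_{K_{u_\la}}(u_\la)$, then $\mc J_{K_{u_\la}}(u_\la)=\sup\mc J_{K_{u_\la}}(\varphi(S^{0}))\le\inf\mc J_{K_{u_\la}}(A)\le c=\mc J_{K_{u_\la}}(u_\la)$ forces $\inf\mc J_{K_{u_\la}}(A)=c$, so the ``furthermore'' assertion of Theorem~\ref{thmsi4} yields $u\in A\cap D(\mc J_{K_{u_\la}})$ with $\|u-u_\la\|=\rho>0$. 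Translating back by $\overline{u}$, the function $u+\overline{u}$ is a second positive weak solution of $(P_\la)$, distinct from the one of Theorem~\ref{thmsi3}.

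The hard analytic inputs — the sharp estimate of Lemma~\ref{lemsi6} placing the linking level strictly below the critical Hardy--Littlewood--Sobolev threshold, and the $(\text{CPS})_c$ verification of Proposition~\ref{propsi4} (itself resting on a careful Brezis--Lieb splitting of the Choquard nonlinearity) — are already in hand, so the main point to be careful about here is the degenerate case $c=\mc J_{K_{u_\la}}(u_\la)$: this is precisely why $A$ is taken to be the full sphere $\{\|v-u_\la\|=\rho\}$, so that the concluding clause of Theorem~\ref{thmsi4} still produces a critical point at positive distance from $u_\la$; one must also check that the segment and the sphere are admissible for the nonsmooth linking scheme, which uses the compact support of $\varPsi_\e$ in $\Om$ to keep all the relevant functions in $D(\mc J_{K_{u_\la}})$.
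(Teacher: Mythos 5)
Your proposal is correct and follows essentially the same route as the paper: exhibit $u_\la$ as a local minimiser of $\mc J_{K_{u_\la}}$, run the ray $u_\la+t\varPsi_\e$ to a point of lower energy, apply the nonsmooth linking Theorem~\ref{thmsi4} with the sphere $A$, and combine Lemma~\ref{lemsi6} with Proposition~\ref{propsi4} to get $(\text{CPS})_c$, concluding via Proposition~\ref{propsi2}. Your explicit treatment of the degenerate case $c=\mc J_{K_{u_\la}}(u_\la)$ through the ``furthermore'' clause is exactly the mechanism the paper relies on (stated there slightly more tersely), so there is no substantive difference.
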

\begin{proof}
From Theorem \ref{thmsi3}, $u_\la$ is a local  minimizer of $	\mc J_{K_{u_\la}}$. It implies there exist $\varsigma >0$ such that $ 	\mc J_{K_{u_\la}}(z)\geq 	\mc J_{K_{u_\la}}(u_\la)$ for every $z \in K_{u_\la}$ with $\|z-u_\la\|\leq \varsigma$. Let $\varPsi= \varPsi_\e$ for $\e$ obtained in Lemma \ref{lemsi6}.  Since $	\mc J_{K_{u_\la}}(u_\la+t\varPsi)\ra -\infty$ as $t \ra \infty$, so choose $t\geq \varsigma/\|\varPsi\| $ such that $ 	\mc J_{K_{u_\la}}(u_\la+t\varPsi)\leq 	\mc J_{K_{u_\la}}(u_\la)$. Define 
\begin{align*}
& \varSigma= \{ \phi \in C([0,1], D(	\mc J_{K_{u_\la}}))\; : \; \phi(0) = u_\la, \phi(1)= u_\la+t\varPsi  \},\\
& A = \{ z \in D(	\mc J_{K_{u_\la}})\;:\; \|z-u_\la\|= \al \} \text{ and } c= \inf_{\phi \in \varSigma}\sup_{r\in [0,1]} 	\mc J_{K_{u_\la}}(\phi(r)). 
\end{align*}
With the help of Proposition \ref{propsi4} and Lemma \ref{lemsi6}, $	\mc J_{K_{u_\la}}$ satisfies $(\text{CPS})_c$ condition. If $c= 	\mc J_{K_{u_\la}}(u_\la)= \inf	\mc J_{K_{u_\la}}(A)$ then $u_\la  \not \in A,\; u_\la +t\varPsi \not \in A, 
\; \inf	\mc J_{K_{u_\la}}(A) \geq 	\mc J_{K_{u_\la}}(u_\la) \geq  \mc J_{K_{u_\la}}(u_\la+t\varPsi)$, and for every $\phi \in \varSigma$, there exist $r \in [0,1]$ such that $\|\phi(r)-u_\la\|= \varsigma$. Thus by Theorem \ref{thmsi4}, we get there exists $z_\la \in D(\mc J_{K_{u_\la}})$ such that $z_\la \not = u_\la,\; \mc J_{K_{u_\la}}(z_\la) =c$ and $ 0 \in \pa^-\mc J_{K_{u_\la}}(z_\la)$. Using Proposition \ref{propsi2}, we obtain that $z_\la$ is positive weak solution to $(\widetilde{P_{\la}})$. \QED
\end{proof}

{\bf Proof of Theorem \ref{thmsi1}}: It follows from Theorem \ref{thmsi3}, Proposition \ref{propsi5} and Lemma \ref{lemsi2}. \QED

\end{document}